\documentclass[10pt,final]{article}
\usepackage{a4}
\usepackage{amsmath}%
\usepackage{amstext}%
\usepackage{amssymb}%
\usepackage{amsthm}
\usepackage{comment}
\usepackage{listings}

\usepackage[final]{graphicx}
\usepackage{subcaption}

\usepackage[margin=0.5in]{geometry}

\usepackage{graphicx}
\usepackage{epstopdf}

\usepackage{xcolor}

\setcounter{MaxMatrixCols}{10}

\newtheorem{theorem}{Theorem}

\newtheorem{axiom}{Axiom}

\newtheorem{corollary}[theorem]{Corollary}

\newtheorem{definition}[axiom]{Definition}

\newtheorem{lemma}[theorem]{Lemma}

\newtheorem{proposition}[theorem]{Proposition}

\newenvironment{remark}{\rem\rm}{\endrem}



%
\newcommand{\R}{\mathbb{R}}%
\newcommand{\N}{\mathbb{N}}%
\newcommand{\e}{\varepsilon}%
\newcommand{\ol}{\overline}%

\newcommand{\n}{{\nabla}}
\newcommand{\p}{{\partial}}
\newcommand{\ds}{\displaystyle}
\newcommand{\To}{\longrightarrow}
\def\a{\alpha}

\def\b{\beta}
\def\e{\epsilon}
\def\d{\delta}
\def\t{\theta}
\def\g{\gamma}

\def\l{\lambda}
\def\<{\langle}
\def\>{\rangle}
\DeclareMathOperator*\dom{dom}%
\DeclareMathOperator*\prox{prox}%
\DeclareMathOperator*\argmin{argmin}

\DeclareMathOperator*\crit{crit}
\DeclareMathOperator*\dist{dist}
\DeclareMathOperator*\sgn{sgn}

\DeclareMathOperator*\pr{pr}

\textwidth17.5cm \textheight23.5cm
\oddsidemargin-0.5cm
\topmargin-1cm

\title{Forward-backward algorithms with different inertial terms for  structured non-convex  minimization problems}





\author{   Szil\'ard Csaba  L\'aszl\'o \thanks{Technical University of Cluj-Napoca, Department of Mathematics, Memorandumului 28, Cluj-Napoca, Romania, e-mail: szilard.laszlo@math.utcluj.ro., This work was supported by a grant of Ministry of Research and Innovation, CNCS -
UEFISCDI, project number PN-III-P1-1.1-TE-2016-0266}
}

\medskip



\begin{document}

\maketitle

\begin{abstract}
We investigate two inertial forward-backward  algorithms  in connection with the minimization of the sum of a  non-smooth and  possibly non-convex and a non-convex differentiable function.  The algorithms are formulated in the spirit of the famous FISTA method, however the setting is non-convex and we allow different inertial terms. Moreover, the inertial parameters in our algorithms can take negative values too.  We also treat the case when the non-smooth function is convex and we show that in this case a better step size can be allowed. We prove some abstract convergence results which applied to our numerical schemes allow us to show that the generated sequences converge to a critical point of the objective function, provided a regularization of the objective function satisfies the Kurdyka-{\L}ojasiewicz property.
 Further, we obtain a general result that applied to our numerical schemes ensures convergence rates for the generated sequences and for the objective function values formulated in terms of the KL exponent of a regularization of the objective function. Finally, we apply our results to image restoration.
 \end{abstract}

{\bf Key words:}
global optimization; inertial proximal-gradient algorithm; non-convex optimization; abstract convergence theorem; Kurdyka-\L{}ojasiewicz inequality; KL exponent; convergence rate

{\bf AMS subject classifications:}
90C26; 90C30; 65K10

\section{Introduction}

Let $f:\R^m\To\R\cup\{+\infty\}$ be a  proper and lower semicontinuous function and let $g:\R^m\To\R$ be a  smooth function with $L_g$ Lipschitz continuous gradient, that is, $\|\n g(x)-\n g(y)\|\le L_g\|x-y\|$ for all $x,y\in\R^m.$
Consider the optimization problem
\begin{equation}\label{propt}
\inf_{x\in\R^m}f(x)+g(x).
\end{equation}

We associate to this optimization problem the following forward-backward algorithm.
For the initial values $x_{-1}=x_0\in\R^m$ and for all $n\in\N$ consider
\begin{equation*}
\rm{(PADISNO)\,\,\,\,\,\,\,\,\,\,}\left\{\begin{array}{llll}
\ds y_n=x_n+\a_n(x_n-x_{n-1}),\\
\\
\ds z_n=x_n+\b_n(x_n-x_{n-1}),\\
\\
\ds x_{n+1}\in\argmin_{y\in\R^m}\left\{f(y)+\<\n g(z_n), y-y_n\>+\frac{1}{2s}\|y-y_n\|^2\right\},
\end{array}\right.
\end{equation*}
\noindent where PADISNO stands shortly for
"Proximal Algorithm with Different Inertial Steps for Nonconvex Optimization".
We assume that the sequences $(\a_n)_{n\in\N}$ and $(\b_n)_{n\in\N}$ and the step size $s$ in the definition of PADISNO satisfy the following conditions:
$$\boxed{\a_n\To\a\in\left(-\frac{1}{2},\frac{1}{2}\right),\,\b_n\To\b\in\R,\,n\To+\infty\mbox{ and }0<s<\frac{1-2|\a|}{L_g(2|\b|+1)}.}$$

Further, we assume  that the function $f$ is bounded from  below  in order to ensure that the  argmin set in the definition of $x_{n+1}$ is nonempty.
Indeed, note that
$$\argmin_{y\in\R^m}\left\{f(y)+\<\n g(z_n), y-y_n\>+\frac{1}{2s}\|y-y_n\|^2\right\}=\argmin_{y\in\R^m}\left\{f(y)+\frac{1}{2s}\|y-(y_n-s\n g(z_n))\|^2\right\},$$
and obviously if $f$ is bounded from below then the function $\psi(y)= f(y)+\frac{1}{2s}\|y-(y_n-s\n g(z_n))\|^2$ is coercive, (i.e. $\lim_{\|y\|\To+\infty}\psi(y)=+\infty$), hence $\argmin_{y\in\R^m}\psi(y)\neq\emptyset$.

 We underline that in case $g$ is a concave function we may allow in PADISNO the step size $0<s<\frac{1-2|\a|}{2Lg|\b|},$ (with the convention that the right hand side of the previous inequality is $+\infty$ for $\b=0$), hence for an appropriate choice of the inertial parameters $(\a_n)$ and $(\b_n)$ the step size can be arbitrary large.

Observe that we allow different extrapolation terms in PADISNO, moreover the inertial coefficients $\a_n,\,n\in \N$ and $\b_n,\,n\in\N$ can take negative values too.
Let us discuss the relation of our scheme with other algorithms from the literature.

First of all, note that  PADISNO can  equivalently be written as
\begin{equation}\label{compalg}
x_{n+1}\in\argmin_{y\in\R^m}\left\{f(y)+\frac{1}{2s}\|y-(x_n+\a_n(x_n-x_{n-1})-s\n g(x_n+\b_n(x_n-x_{n-1})))\|^2\right\}.
\end{equation}

If we take $\b_n\equiv 0$ then \eqref{compalg} becomes a particular case of the algorithm studied in \cite{BCL}. Further, if we assume $\a_n\equiv0$ and $\b_n\equiv 0$ then
\eqref{compalg} leads to the algorithm
\begin{equation}\label{compalg1}
x_{n+1}\in\argmin_{y\in\R^m}\left\{f(y)+\frac{1}{2s}\|y-(x_n-s\n g(x_n))\|^2\right\},
\end{equation}
that was investigated in \cite{b-sab-teb}, see also \cite{CP} and \cite{FGP}, (see also \cite{HLMQY}, where \eqref{compalg1} was investigated in connection to a particular instance of \eqref{propt}).
If we assume additionally that $g\equiv 0$ then we obtain the algorithm studied in \cite{attouch-bolte2009}.

Consider now the case $f\equiv 0.$ Then \eqref{compalg} becomes
\begin{equation}\label{compalg0}
x_{n+1}=x_n+\a_n(x_n-x_{n-1})-s\n g(x_n+\b_n(x_n-x_{n-1})),
\end{equation}
 which is the algorithm obtained in \cite{ALP} from the explicit discretization of a perturbed heavy ball system. Further, if $\a_n=\b_n=\frac{\b n}{n+\a}$ for all $n\in\N$, where $\b\in(0,1)$ and $\a>0$, then \eqref{compalg0} becomes the algorithm studied in \cite{L}. Moreover, if $\a_n=\b_n$ for all $n\in\N$ and $\lim_{n\To+\infty}\a_n=\a\in\left(\frac{-10+\sqrt{68}}{8},0\right)$ and $s<\frac{4\a^2+10\a+2}{L_g(2\a+1)^2}$ then \eqref{compalg0} leads to the algorithm studied in \cite{ALV}.

 We refer to \cite{BauComb}, \cite{bete09} , \cite{MO} for the full convex case, that is, the functions $f$ and $g$ are convex,  where different instances of PADISNO have been investigated. For other inertial optimization algorithms studied in the literature we refer to \cite{AAD,ALV,alvarez-attouch2001,APR,bete09,BCH,ch-do2015,CG,GFJ,GRV,L,LRP,PL,nesterov83,poly,SYLHGJ,ZK}.
\vskip0.3cm

Let us consider now a variant of PADISNO where we assume that the function $f$ is convex, see also \cite{bc-forder-kl} for a continuous counterpart. In this case PADISNO can be written as follows.
For $x_{-1}=x_0\in\R^m$ and $n\in\N$ consider
\begin{equation*}
\rm{(c-PADISNO)\,\,\,\,\,\,\,\,\,\,}\left\{\begin{array}{llll}
\ds y_n=x_n+\a_n(x_n-x_{n-1}),\\
\\
\ds z_n=x_n+\b_n(x_n-x_{n-1}),\\
\\
\ds x_{n+1}=\prox\nolimits_{s f}(y_n-s\n g(z_n)).
\end{array}\right.
\end{equation*}

Here $\prox\nolimits_{s  f} : {\R^m} \rightarrow {\R^m}, \quad \prox\nolimits_{s f}(x)=\argmin_{y\in {\R^m}}\left \{f(y)+\frac{1}{2s}\|y-x\|^2\right\},$ denotes the proximal point operator of the convex function $s  f$. 

The assumption that the function $f$ is convex in c-PADISNO allows us to consider  some more general forms for the sequences $(\a_n)_{n\in\N},(\b_n)_{n\in\N}$ and also for the step size $s.$ More precisely we may allow the following conditions:
$$\boxed{\a_n\To\a\in(-1,1),\,\b_n\To\b\in\R,\,n\To+\infty\mbox{ and }0<s<\frac{2(1-|\a|)}{L_g(2|\b|+1)}.}$$

 Therefore, we emphasize that despite its similar formulation, c-PADISNO is not entirely a particular case of PADISNO, the assumption that $f$ is convex leads to a much better step size in the latter. Further, in c-PADISNO we do not need to assume that the function $f$ is bounded from below, since the function $y\To f(y)+\frac{1}{2s}\|y-x\|^2$ is strongly convex for all $x\in\R^m$ and therefore the proximal operator (and consequently  $x_{n+1}$) is defined everywhere.

  Let us notice that if we assume that $g$ is a concave function we are in the DC programming framework, since our objective function in the optimization problem  \eqref{propt} is the difference of two convex functions, $f-(-g)$. In this case we may allow in c-PADISNO the step size $0<s<\frac{1-|\a|}{Lg|\b|},$ hence also here, for an appropriate choice of the inertial parameters $(\a_n)$ and $(\b_n)$ the step size can be arbitrary large. Consequently, c-PADISNO can be successfully applied to DC programming problems where the objective function is the diffrence of a non-smooth convex and a smooth convex function, (for some similar approaches see \cite{NOSS}).

  A first variant of c-PADISNO, with the inertial parameters $(\a_n)\subseteq[0,a),\,a<1,\,(\b_n)\subseteq[0,1]$ and variable step size $\l_n$ satisfying $\e_1\le \l_n<\frac{2}{L_g}-\e_2,\,\e_1,\e_2>0$ has been introduced in \cite{LFP}, and also studied in the full convex case, (i.e. the function $g$ is also convex), in \cite{JM}, under some more general convergence conditions. The results from \cite{JM} were extended to the case when $g$ is non-convex in \cite{WL}, (see also \cite{WLLL}). The novelty of c-PADISNO consists in allowing the inertial parameters to take negative values too, more precisely $(\a_n)\subseteq(-1,1)$  and $(\b_n)\subseteq \R$, thus the sequences generated by c-PADISNO will have a superior convergence behavior as some numerical experiments show, (see Section 1.1)

Let us mention that for us the  idea of using different extrapolation terms comes from  \cite{ALP}, where the authors considered a perturbed heavy ball system with vanishing damping, (see, also \cite{su-boyd-candes})
\begin{equation}\label{dysyALP}
\left\{
\begin{array}{ll}
\ddot{x}(t)+\frac{\a}{t}\dot{x}(t)+\nabla g\left(x(t)+\left(\g+\frac{\b}{t}\right)\dot{x}(t)\right)=0\\
x(t_0)=u_0,\,\dot{x}(t_0)=v_0,\, u_0,v_0\in \R^m,\,t_0>0\mbox{ and }\a>0,\,\b\in\R,\,\g\ge 0,
\end{array}
\right.
\end{equation}
in connection to the convex optimization problem $\inf_{x\in\R^m}g(x). $

According to \cite{ALP}, explicit Euler discretization of \eqref{dysyALP} leads to the algorithm
\begin{equation}\label{alggentemp}
\left\{
\begin{array}{lll}
y_n=x_n+\left(1-\frac{\a}{n}\right)(x_n-x_{n-1})\\
z_n=x_n+\left({\g}+\frac{\b}{n}\right)(x_n-x_{n-1})\\
x_{n+1}=y_n-s\n g\left(z_n\right),
\end{array}
\right.
\end{equation}
where $x_0,x_{-1}\in\R^m$ and the iterations are considered for all $n\in\N$.
Notice that Algorithm \eqref{alggentemp} seems to be a particular case of c-PADISNO for $f\equiv 0.$

One can easily observe that for $\b_n\equiv 0$ c-PADISNO becomes a version of iPiano studied in \cite{OCBP,Ochs}.
We underline that c-PADISNO has a  similar formulation as the  FISTA algorithm,  see \cite{bete09,ch-do2015}, but we allow for different inertial terms in order to get a better control on the step size $s$. Consequently, the convergence of the generated sequences to a critical point of the objective function $f+g$  opens the gate for the study of FISTA type algorithms in a non-convex setting. However, our analysis do not cover the case $\a_n\To 1$, essential for FISTA type algorithms, nevertheless, the numerical experiments from the next section suggest that for c-PADISNO this requirement is not essential.


In what follows the main results of the paper are stated. The first one assures the convergence of the sequences generated by the algorithms PADISNO and c-PADISNO. The second result provides convergence rates.

\begin{theorem}\label{convergence} In the settings of problem \eqref{propt}, for some starting points $x_{-1}=x_0\in\R^m,$ consider the sequence $(x_n)_{n\in\N}$ generated by PADISNO or c-PADISNO. Assume that $f+g$ is bounded from below and consider the function
$$H:\R^m\times\R^m\To\R\cup\{+\infty\},\,H(x,y)=(f+g)(x)+\frac12\|y-x\|^2.$$ Let $x^*$ be a cluster point of the sequence $(x_n)_{n\in\N}$ and assume that $H$ satisfies the Kurdyka-{\L}ojasiewicz property at the point $z^*=(x^*,x^*).$

 Then, the sequence $(x_n)_{n\in\N}$ converges to $x^*$ and $x^*$ is a critical point of the objective function $f+g.$
\end{theorem}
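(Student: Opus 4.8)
The plan is to place both algorithms inside the standard Kurdyka--\L ojasiewicz descent framework by verifying, for the lifted sequence $z_n=(x_n,x_{n-1})$ and the regularized energy $H$, the three conditions that drive the abstract convergence theorem established earlier in the paper: a sufficient-decrease estimate, a relative-error (subgradient) bound, and a continuity condition along a convergent subsequence. Since $H(z_n)=(f+g)(x_n)+\frac12\|x_n-x_{n-1}\|^2$, the increments $\|x_{n+1}-x_n\|$ will be the natural quantity controlling the decrease of $H(z_n)$.

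First I would derive the descent inequality. Using that $x_{n+1}$ minimizes the subproblem, comparing its value with the competitor $y=x_n$ yields $f(x_{n+1})+\<\n g(z_n),x_{n+1}-x_n\>+\frac{1}{2s}\|x_{n+1}-y_n\|^2\le f(x_n)+\frac{1}{2s}\|x_n-y_n\|^2$. Combining this with the descent lemma for $g$ (Lipschitz gradient), namely $g(x_{n+1})\le g(z_n)+\<\n g(z_n),x_{n+1}-z_n\>+\frac{L_g}{2}\|x_{n+1}-z_n\|^2$, and substituting $y_n=x_n+\a_n(x_n-x_{n-1})$, $z_n=x_n+\b_n(x_n-x_{n-1})$, one collects all inner-product and quadratic cross-terms. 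After estimating the cross-terms by Young's inequality, the step-size hypothesis $0<s<\frac{1-2|\a|}{L_g(2|\b|+1)}$ (respectively $0<s<\frac{2(1-|\a|)}{L_g(2|\b|+1)}$ in the convex case, where convexity of $f$ replaces the comparison inequality by the sharper subgradient inequality) is exactly what makes the leading coefficient of $\|x_{n+1}-x_n\|^2$ strictly larger than that of $\|x_n-x_{n-1}\|^2$. This produces an estimate of the form $(f+g)(x_{n+1})+c_1\|x_{n+1}-x_n\|^2\le(f+g)(x_n)+c_2\|x_n-x_{n-1}\|^2$ with $c_1>c_2\ge0$, which, after choosing the coefficient in $H$ within the admissible gap $(c_2,c_1)$, upgrades to $H(z_{n+1})+a\|x_{n+1}-x_n\|^2\le H(z_n)$ for some $a>0$. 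I expect this to be the main obstacle: handling two distinct inertial terms with possibly negative coefficients forces a careful bookkeeping of the mixed terms, and it is delicate to check that the boxed step-size bounds are precisely the threshold ensuring $a>0$.

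Next I would establish the subgradient bound. The first-order optimality condition of the subproblem gives $u_{n+1}:=\frac1s(y_n-x_{n+1})-\n g(z_n)+\n g(x_{n+1})\in\partial(f+g)(x_{n+1})$, so that $\left(u_{n+1}+(x_{n+1}-x_n),\,x_n-x_{n+1}\right)\in\partial H(z_{n+1})$. Substituting the definitions of $y_n,z_n$ and using the $L_g$-Lipschitz continuity of $\n g$ bounds $\|u_{n+1}\|$, and hence $\dist(0,\partial H(z_{n+1}))$, by $b(\|x_{n+1}-x_n\|+\|x_n-x_{n-1}\|)$ for some $b>0$. For the continuity condition, since summing the descent inequality gives $\sum_n\|x_{n+1}-x_n\|^2<\infty$ (using that $f+g$ is bounded below), the increments vanish; then along a subsequence $x_{n_k}\to x^*$ I would combine the lower semicontinuity of $f$ with the minimality of $x_{n_k}$ (evaluating the subproblem at $y=x^*$) to force $f(x_{n_k})\to f(x^*)$, whence $H(z_{n_k})\to H(z^*)$.

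Finally, with the three conditions in hand, the abstract KL convergence theorem yields that $(x_n)_{n\in\N}$ has finite length and therefore converges to $x^*$. Passing to the limit in the inclusion $u_{n+1}\in\partial(f+g)(x_{n+1})$, using $\|u_{n+1}\|\To0$, $x_{n+1}\To x^*$, $(f+g)(x_{n+1})\To(f+g)(x^*)$ and the closedness of the graph of the limiting subdifferential, gives $0\in\partial(f+g)(x^*)$, i.e.\ $x^*$ is a critical point of $f+g$.
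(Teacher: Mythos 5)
Your overall architecture (sufficient decrease, relative error bound, continuity along a subsequence, then the paper's abstract KL theorem, and finally graph-closedness of $\partial(f+g)$ to identify $x^*$ as critical) is the same as the paper's, and your verifications of the subgradient bound and of the continuity condition are essentially those of Proposition \ref{condH} and Lemma \ref{regularization}. The genuine gap is in the sufficient-decrease condition (H1). You lift with $z_n=(x_n,x_{n-1})$, so the energy you must make decrease is $H(z_n)=(f+g)(x_n)+\frac12\|x_n-x_{n-1}\|^2$, whose quadratic coefficient is frozen at $\frac12$ by the statement of the theorem; but the decrease the iterations actually deliver is for the Lyapunov function $(f+g)(x_n)+\delta_n\|x_n-x_{n-1}\|^2$ with $\delta_n\To\frac{1}{4s}-\frac{L_g}{4}$ (PADISNO), respectively $\frac{1}{2s}-\frac{L_g}{4}$ (c-PADISNO), and nothing forces this limit to equal, or even exceed, $\frac12$. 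Under the boxed step-size condition your ``admissible gap'' $(c_2,c_1)$ can be an interval that does not contain $\frac12$, and then the claimed inequality is simply false. Concretely: take PADISNO with $\alpha_n\equiv\beta_n\equiv 0$, let $f$ be the indicator of the two-point set $\{0,1\}$ (so $f$ is proper, lsc, bounded below, semi-algebraic), $g(x)=\frac12\left(x-\frac35\right)^2$ (so $L_g=1$), and $s=\frac{99}{100}<1$, which satisfies the boxed condition. Starting from $x_{-1}=x_0=0$ one computes $x_1=1$ (the prox objective at $y=1$ equals $-\frac35+\frac{50}{99}<0$, beating the value $0$ at $y=0$), and then $H(z_0)=\frac{9}{50}$ while $H(z_1)=\frac{2}{25}+\frac12=\frac{29}{50}$: your lifted energy \emph{increases} at the very first step, so (H1) with coefficient $\frac12$ fails for every $a>0$. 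You noticed the mismatch yourself (``choosing the coefficient in $H$ within the admissible gap''), but that move is not available to you: replacing $\frac12$ by some $\theta\in(c_2,c_1)$ changes the function whose KL property you invoke, whereas the theorem's hypothesis is the KL property of $H$ with coefficient exactly $\frac12$.

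The paper closes precisely this gap by changing the lifting rather than the function: after discarding the first $N$ iterates (so that $\delta_n>0$), it sets $u_n=\bigl(\tilde x_n,\,\tilde x_n+\sqrt{2\delta_{n+N}}(\tilde x_n-\tilde x_{n-1})\bigr)$, so that $H(u_n)=(f+g)(\tilde x_n)+\delta_{n+N}\|\tilde x_n-\tilde x_{n-1}\|^2$ is exactly the Lyapunov function of Lemma \ref{descentnc} (resp.\ Lemma \ref{descent}); then (H1) holds for the original $H$, and $u_n$ still has the inertial form $(x_n+\alpha_n(x_n-x_{n-1}),x_n+\beta_n(x_n-x_{n-1}))$ required by Theorem \ref{thabstrconv}, here with $\alpha_n=0$ and $\beta_n=\sqrt{2\delta_{n+N}}$ bounded. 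Alternatively, your plan could be salvaged by proving that the KL property of $H$ at $(x^*,x^*)$ transfers to $H_\theta(x,y)=(f+g)(x)+\theta\|y-x\|^2$ via the invertible linear substitution $(x,y)\mapsto\bigl(x,x+\sqrt{2\theta}(y-x)\bigr)$; this is true, but it has to be stated and proved, and it is exactly the content that the paper's rescaled sequence encodes implicitly.
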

\begin{theorem}\label{convergencerates} In the settings of problem \eqref{propt}, for some starting points $x_{-1}=x_0\in\R^m,$ consider the sequence $(x_n)_{n\in\N}$ generated by PADISNO or c-PADISNO. Assume that $f+g$ is bounded from below and consider the function
$$H:\R^m\times\R^m\To\R\cup\{+\infty\},\,H(x,y)=(f+g)(x)+\frac12\|y-x\|^2.$$ Let $x^*$ be a cluster point of the sequence $(x_n)_{n\in\N}$ and assume that $H$ has the Kurdyka-{\L}ojasiewicz property at the point $z^*=(x^*,x^*),$ with the KL exponent $\t\in[0,1).$

 Then, the sequence $(x_n)_{n\in\N}$ converges to $x^*$ and $x^*$ is a critical point of the objective function $f+g.$

Further, the following statements hold.
\begin{itemize}
\item[$(\emph{a})$] If $\t=0$ then $((f+g)(x_n))_{n\in\N},(x_n)_{n\in\N},(y_n)_{n\in\N}$ and $(z_n)_{n\in\N}$ converge in a finite number of steps;
\item[$(\emph{b})$] If $\t\in\left(0,\frac12\right]$ then there exist $a_1>0,$ $Q\in[0,1)$  and $\ol N\in\N$ such that
$(f+g)(x_n)-(f+g)(x^*)\le a_1 Q^n$, $\|x_n-x^*\|\le a_1 Q^{\frac{n}{2}}$, $\|y_n-x^*\|\le a_1 Q^{\frac{n}{2}}$ and $\|z_n-x^*\|\le a_1 Q^{\frac{n}{2}}$ for  every $n\ge \ol N$;
\item[$(\emph{c})$] If $\t\in\left(\frac12,1\right)$ then there exist  $a_2>0$ and $\ol N\in\N$ such that
$(f+g)(x_n)-(f+g)(x^*)\le a_2 {n^{-\frac{1}{2\t-1}}},$ $\|x_n-x^*\|\le a_2 {n^{\frac{\t-1}{2\t-1}}}$, $\|y_n-x^*\|\le a_2 {n^{\frac{\t-1}{2\t-1}}}$  and $\|z_n-x^*\|\le a_2 {n^{\frac{\t-1}{2\t-1}}}\mbox{ for all }n\ge\ol N$.
\end{itemize}
\end{theorem}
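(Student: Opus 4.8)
The plan is to read Theorem \ref{convergencerates} as a direct application of the abstract convergence-and-rate machinery: I introduce the augmented variable $z_n=(x_n,x_{n-1})\in\R^m\times\R^m$, note that the stated function satisfies $H(z_n)=(f+g)(x_n)+\frac12\|x_n-x_{n-1}\|^2$, and verify for the sequence $(z_n)_{n\in\N}$ the three standard ingredients — a sufficient-decrease inequality, a relative-error subgradient bound, and a continuity condition at cluster points. Once these hold, the abstract convergence result yields the first assertion (convergence of $(x_n)$ to the critical point $x^*$), and the abstract rate result, fed with the KL exponent $\theta$, produces the trichotomy (a)–(c) for the Lyapunov gap $H(z_n)-H(z^*)$ and for the tail length $\sum_{k\ge n}\|x_{k+1}-x_k\|$; the last step transfers these rates to $(f+g)(x_n)$, $x_n$, $y_n$ and $z_n$.

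First I would establish the sufficient-decrease inequality, the analytic core. Starting from the fact that $x_{n+1}$ minimizes $y\mapsto f(y)+\<\n g(z_n),y-y_n\>+\frac1{2s}\|y-y_n\|^2$, I compare its values at $y=x_{n+1}$ and $y=x_n$ and combine this with the descent lemma for $g$ (using $L_g$-Lipschitzness of $\n g$) and the three-point identities expanding $\|x_{n+1}-y_n\|^2$ and $\|x_{n+1}-z_n\|^2$ in terms of $\|x_{n+1}-x_n\|^2$, $\|x_n-x_{n-1}\|^2$ and their cross term. Collecting and applying Young's inequality to the cross term gives $(f+g)(x_{n+1})+C_1\|x_{n+1}-x_n\|^2\le (f+g)(x_n)+C_2\|x_n-x_{n-1}\|^2$ with $C_1,C_2$ depending on $s,L_g,\alpha_n,\beta_n$. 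Since $\alpha_n\To\alpha$, $\beta_n\To\beta$, the step-size bound $0<s<\frac{1-2|\alpha|}{L_g(2|\beta|+1)}$ is exactly what forces $\liminf C_1>\limsup C_2$, so for some intermediate $\delta>0$ and $a>0$ the Lyapunov function $\mathcal H_n=(f+g)(x_n)+\delta\|x_n-x_{n-1}\|^2$ obeys $\mathcal H_{n+1}\le\mathcal H_n-a\|x_{n+1}-x_n\|^2$ for $n$ large. For c-PADISNO I rerun the computation but replace the value comparison of $f$ by its subgradient inequality at $x_{n+1}$ against $x_n$; convexity sharpens the constants and accounts for the larger admissible step $s<\frac{2(1-|\alpha|)}{L_g(2|\beta|+1)}$. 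Summation gives $\sum_n\|x_{n+1}-x_n\|^2<\infty$, hence $\|x_{n+1}-x_n\|\To0$. I note that $\mathcal H$ and the stated $H$ differ only in the positive coefficient of $\|y-x\|^2$ and are therefore related by the linear change of variables $(x,y)\mapsto(x,x+\sqrt{2\delta}(y-x))$, which fixes $z^*=(x^*,x^*)$; hence they share the KL exponent $\theta$ at $z^*$, so the hypothesis on $H$ transfers to $\mathcal H$.

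Next I supply the relative-error bound and the continuity condition. The optimality condition for the argmin reads $\frac1s(y_n-x_{n+1})-\n g(z_n)\in\partial f(x_{n+1})$, so $\xi_{n+1}:=\frac1s(y_n-x_{n+1})-\n g(z_n)+\n g(x_{n+1})\in\partial(f+g)(x_{n+1})$, and since the coupling and $g$ are smooth, the pair $\big(\xi_{n+1}+(x_{n+1}-x_n),\,x_n-x_{n+1}\big)$ lies in $\partial\mathcal H(z_{n+1})$. Expanding $y_n-x_{n+1}$ and $z_n-x_{n+1}$ through $y_n,z_n$ and using $L_g$-Lipschitzness of $\n g$ bounds its norm by $b\big(\|x_{n+1}-x_n\|+\|x_n-x_{n-1}\|\big)$, the required estimate. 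The continuity condition at a cluster point follows from lower semicontinuity of $f$, continuity of $g$, and $\|x_{n+1}-x_n\|\To0$, giving $\mathcal H(z_{n_k})\To\mathcal H(z^*)$ along the subsequence $x_{n_k}\To x^*$.

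Finally I read off the rates. The abstract rate theorem, applied with exponent $\theta$, returns: finite termination when $\theta=0$; a geometric bound $\mathcal H(z_n)-\mathcal H(z^*)\le CQ^n$ and $\sum_{k\ge n}\|x_{k+1}-x_k\|\le CQ^{n/2}$ when $\theta\in(0,\tfrac12]$; and the polynomial bounds with exponents $-\frac1{2\theta-1}$ and $\frac{\theta-1}{2\theta-1}$ when $\theta\in(\tfrac12,1)$. Since $(f+g)(x_n)-(f+g)(x^*)\le \mathcal H(z_n)-\mathcal H(z^*)$, the function-value rate is immediate; since $\|x_n-x^*\|\le\sum_{k\ge n}\|x_{k+1}-x_k\|$, the rate for $x_n$ follows; and $\|y_n-x^*\|\le\|x_n-x^*\|+|\alpha_n|\,\|x_n-x_{n-1}\|$, $\|z_n-x^*\|\le\|x_n-x^*\|+|\beta_n|\,\|x_n-x_{n-1}\|$, together with the fact that $\|x_n-x_{n-1}\|$ is dominated by the tail length (hence decays at the same rate), give the rates for $y_n$ and $z_n$. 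The main obstacle is the sufficient-decrease step: producing one Lyapunov function with a strictly positive decrease constant across the entire admissible parameter range — negative inertial coefficients and unbounded $\beta$ included — and doing so with the sharper constant that convexity of $f$ affords in c-PADISNO; the rate bookkeeping of the last paragraph is comparatively routine.
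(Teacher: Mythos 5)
Your proposal is correct and follows the same overall skeleton as the paper: establish a sufficient-decrease (H1)-type inequality for a quadratic Lyapunov regularization, a relative-error subgradient bound, and a continuity condition at cluster points, then invoke abstract KL convergence and rate theorems and transfer the rates to $(f+g)(x_n)$, $x_n$, $y_n$, $z_n$ exactly as you do (your bookkeeping in the last paragraph matches the paper's, including the bound $\|y_n-x^*\|\le\|x_n-x^*\|+|\a_n|\|x_n-x_{n-1}\|$). The genuine difference lies in how the mismatch between the Lyapunov coefficient $\d$ and the coefficient $\frac12$ in the hypothesis function $H$ is resolved. You keep the iPiano pairing $(x_n,x_{n-1})$, work with the modified function $\mathcal H(x,y)=(f+g)(x)+\d\|y-x\|^2$, and transfer the KL property with exponent $\t$ from $H$ to $\mathcal H$ via the invertible linear map $T(x,y)=(x,x+\sqrt{2\d}(y-x))$; this is valid, but it rests on the chain rule $\p(H\circ T)(z)=T^{*}\p H(Tz)$ for the limiting subdifferential and on the fact that invertible linear changes of variables preserve the KL exponent (only the constant $K$ changes), a lemma you assert in one clause and should state and prove. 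The paper goes the other way: it never modifies $H$, but instead absorbs the coefficient into the \emph{sequence}, evaluating $H$ at $u_n=\bigl(\tilde x_n,\tilde x_n+\sqrt{2\d_{n+N}}(\tilde x_n-\tilde x_{n-1})\bigr)$, so that $H(u_n)=(f+g)(\tilde x_n)+\d_{n+N}\|\tilde x_n-\tilde x_{n-1}\|^2$ and the KL hypothesis on $H$ applies verbatim; this is precisely why its abstract Theorems \ref{thabstrconv} and \ref{thabstrrate} are formulated for sequences of the general form $(x_n+\a_n(x_n-x_{n-1}),x_n+\b_n(x_n-x_{n-1}))$. Your route buys compatibility with off-the-shelf iPiano-type abstract results at the cost of a KL-transfer lemma; the paper's route avoids any transfer lemma at the cost of proving more general abstract theorems. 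Two minor slips to fix: the subgradient pair you exhibit for $\mathcal H$ at $(x_{n+1},x_n)$ should carry factors $2\d$, namely $\bigl(\xi_{n+1}+2\d(x_{n+1}-x_n),\,2\d(x_n-x_{n+1})\bigr)$, not coefficient $1$ (harmless, since $b$ is adjustable); and you use the symbol $z_n$ both for the pair $(x_n,x_{n-1})$ and for the algorithm's iterate $x_n+\b_n(x_n-x_{n-1})$, which should be disambiguated since the theorem's conclusion concerns the latter.
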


\subsection{Motivating numerical experiments}

Before we start with the convergence analysis we present some numerical experiments in order to emphasize the usefulness of considering different inertial steps and the usefulness of allowing negative inertial parameters in the Algorithm c-PADISNO. Our numerical experiments reveal that indeed c-PADISNO has a remarkable behavior.

Consider the convex non-smooth function $f:\R^2\To\R,\, f(x,y)=\sqrt{(x^2+y^2)^3}.$ Then, according to  \cite{beck}, the proximal operator of $\l f,\,\l>0$ is given by $\prox\nolimits_{\l f}:\R^2\To\R^2,$
$$\prox\nolimits_{\l f}(x,y)=\left(\frac{2}{1+\sqrt{1+12 \l\sqrt{x^2+y^2}}}x,\frac{2}{1+\sqrt{1+12 \l\sqrt{x^2+y^2}}}y\right).$$
Consider further the function $g:\R^2\To\R,\, g(x,y)= (x^2 -y)^2 +x^2.$ Then, $g$ is not convex, (actually $f+g$ is non-convex), further $f+g$ has a global minimum at $x^*=(0,0)$, hence  $\min(f+g)=(f+g)(x^*)=0.$

Note that $\n g(x,y)=(4x^3-4xy+2x,2y-2x^2)$ is not globally Lipschitz, however since we will run c-PADISNO with the starting points $x_{-1}=x_0\in D=\{(x,y)\in\R^2:-1\le x,y\le 1\}$, for our goal is enough to compute the Lipschitz constant of $\n g$ on the set $D.$
Indeed, we show (see Figure 1), that for  starting points from $D$ the sequence $(x_n)_{n\in\N}$ generated by c-PADISNO remains in the interior of  $D.$

We have
$\n^2g(x,y)=\left(
               \begin{array}{cc}
                 12x^2-4y+2 & -4x \\
                 -4x & 2 \\
               \end{array}
             \right)$ and
the eigenvalues of $\n^2 g(x,y)$ are
$\l_1(x,y)=8x^2-4y+2$ and $\l_2(x,y)=4x^2+2.$
Hence, since $\n^2 g(x,y)$ is symmetric for all $(x,y)\in D$, we get that
$$\|\n^2 g(x,y)\|=8x^2-4y+2,\mbox{ if }x^2\ge y$$ and
$$\|\n^2 g(x,y)\|=4x^2+2,\mbox{ if }x^2< y.$$
Consequently, we have $\sup_{(x,y)\in D}\|\n^2 g(x,y)\|=14$ which shows that we may take the Lipschitz constant of $\n g$ on $D$ as $L_g=14$.

Observe that $f+g$ is coercive, further note that the functions $f$ and $g$ are semi-algebraic functions, hence $f+g$ is a semi-algebraic function. This fact ensures that also the function $H$ defined in the hypotheses of Theorem \ref{convergence} is semi-algebraic, therefore is a KL function. Consequently, according to Corollary \ref{fornumex}, (see Section 4), $(x_n)$ the sequence generated by c-PADISNO converges to $x^*$ as $n\To +\infty.$

In the following experiments, we consider the numerical scheme c-PADISNO with the inertial sequences
$$\a_n=\frac{\a n}{n+3.1},\,\b_n=\frac{\b n}{n+3.1},\,n\in\N,\,\a\in(-1,1),\,\b\in\R.$$
 Then, obviously $\a_n\To\a,\,\b_n\To\b,\,n\To+\infty$
 Further, we consider several instances for the constants $\a$ and $\b$  and we take the step size $$s=\frac{7}{50}\cdot\frac{1-|\a|}{2|\b|+1}<\frac{2(1-|\a|)}{L_g(2|\b|+1)}.$$

{\bf 1.} For showing that for starting points from $D$, the sequence $x_n=(x_n^1,x_n^2)$ generated by c-PADISNO remains in $D$, we run c-PADISNO until the error $\|x_n-x^*\|$  becomes less than the value $10^{-300}.$ The numerical experiment also reveals that indeed $(x_n)$, the sequence generated by c-PADISNO, converges to $x^*$. We consider the starting points $x_{-1}=x_0=\left(\frac12,-\frac12\right)$ from $D$ and for the parameters  $\a$ and $\b$  we consider the following instances:
$$\a\in\left\{-\frac{9}{10},-\frac12,0,\frac12,\frac35,\frac{9}{10}\right\},$$
$$\b\in\left\{-2,-\frac{3}{2},-1,-\frac{3}{4},-\frac12,-\frac{1}{1},0,\frac{1}{4}\frac12,\frac{3}{4},1\frac{3}{2},2\right\}.$$

Note that the cases $\b=0$  correspond to the i-PIANO method \cite{OCBP}. Further, for comparison purposes, we implemented the case $\a=\b=1,\,s=\frac{1}{L_g}=\frac{1}{14},$ which corresponds to FISTA method \cite{bete09}.

The terms of the sequences $x_n=(x_n^1,x_n^2),\,n\in\N$ generated by c-PADISNO for these parameters and starting points are  depicted at Figure 1.

\begin{figure}[hbt!]\label{fig1}
\begin{subfigure}{.33\textwidth}
  \centering
  \includegraphics[width=.99\linewidth]{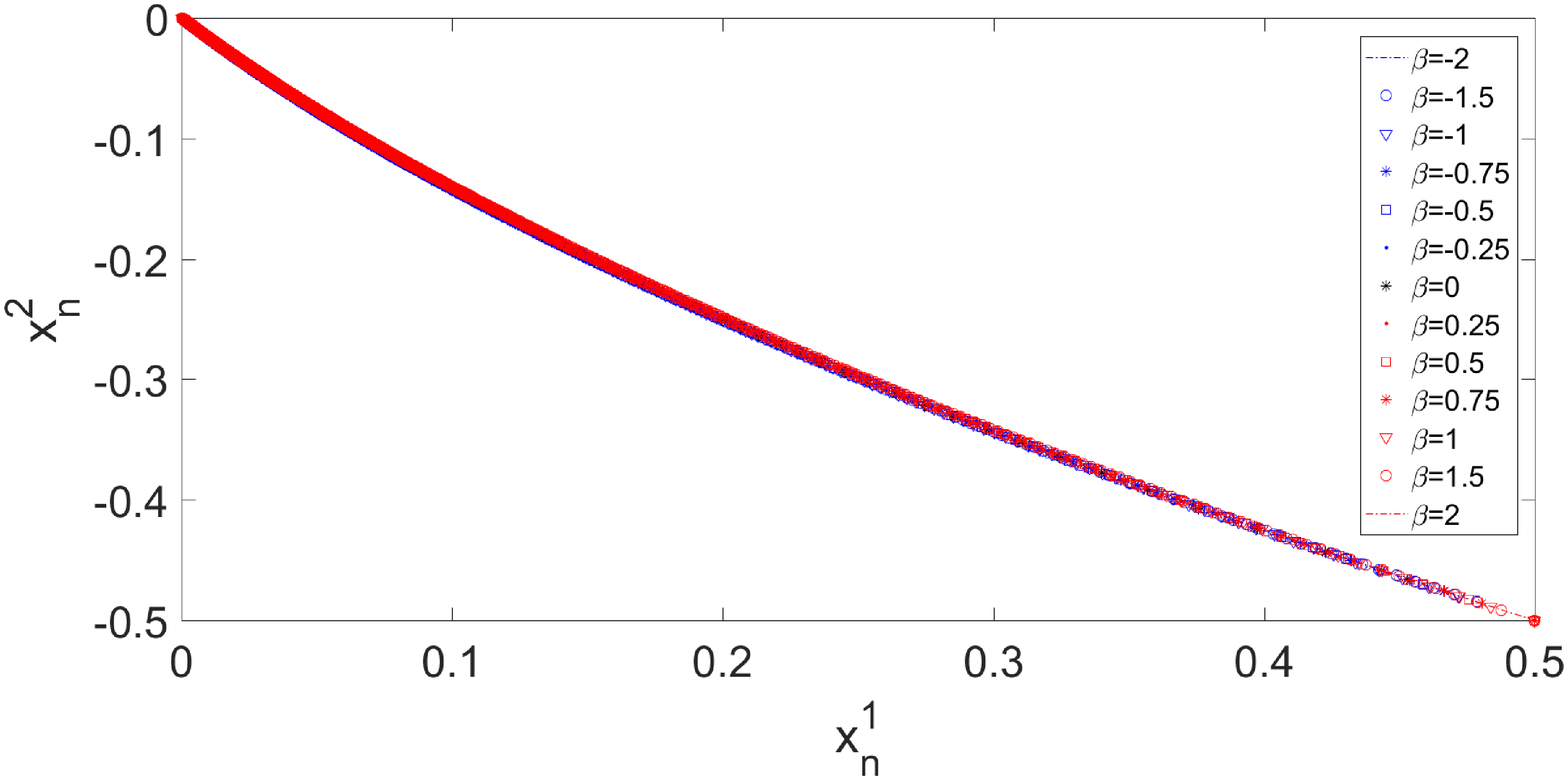}
  \caption{$\a=-0.9$}
\end{subfigure}
\begin{subfigure}{.33\textwidth}
  \centering
  \includegraphics[width=.99\linewidth]{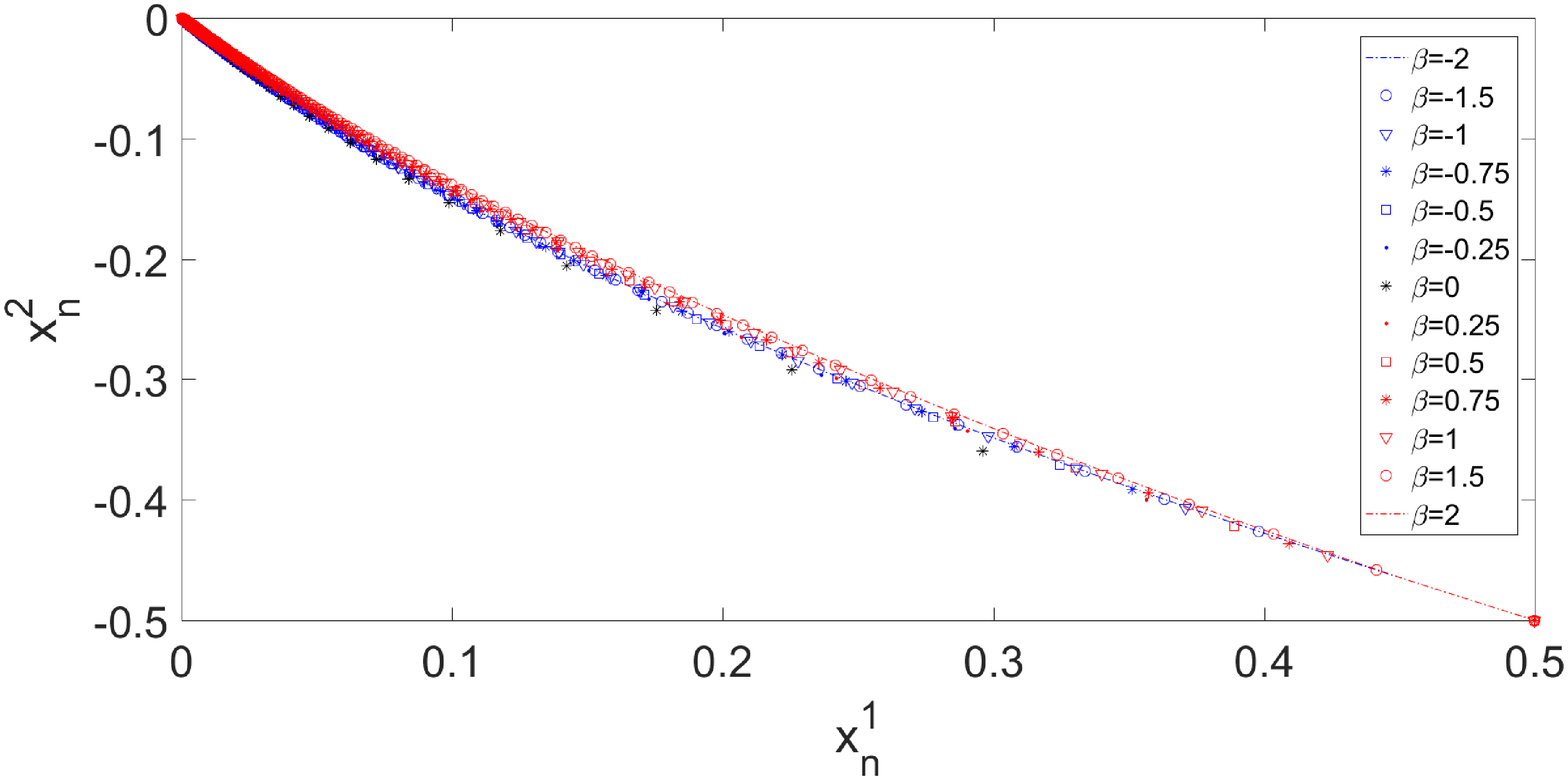}
  \caption{$\a=-0.5$}
\end{subfigure}
\begin{subfigure}{.33\textwidth}
  \centering
  \includegraphics[width=.99\linewidth]{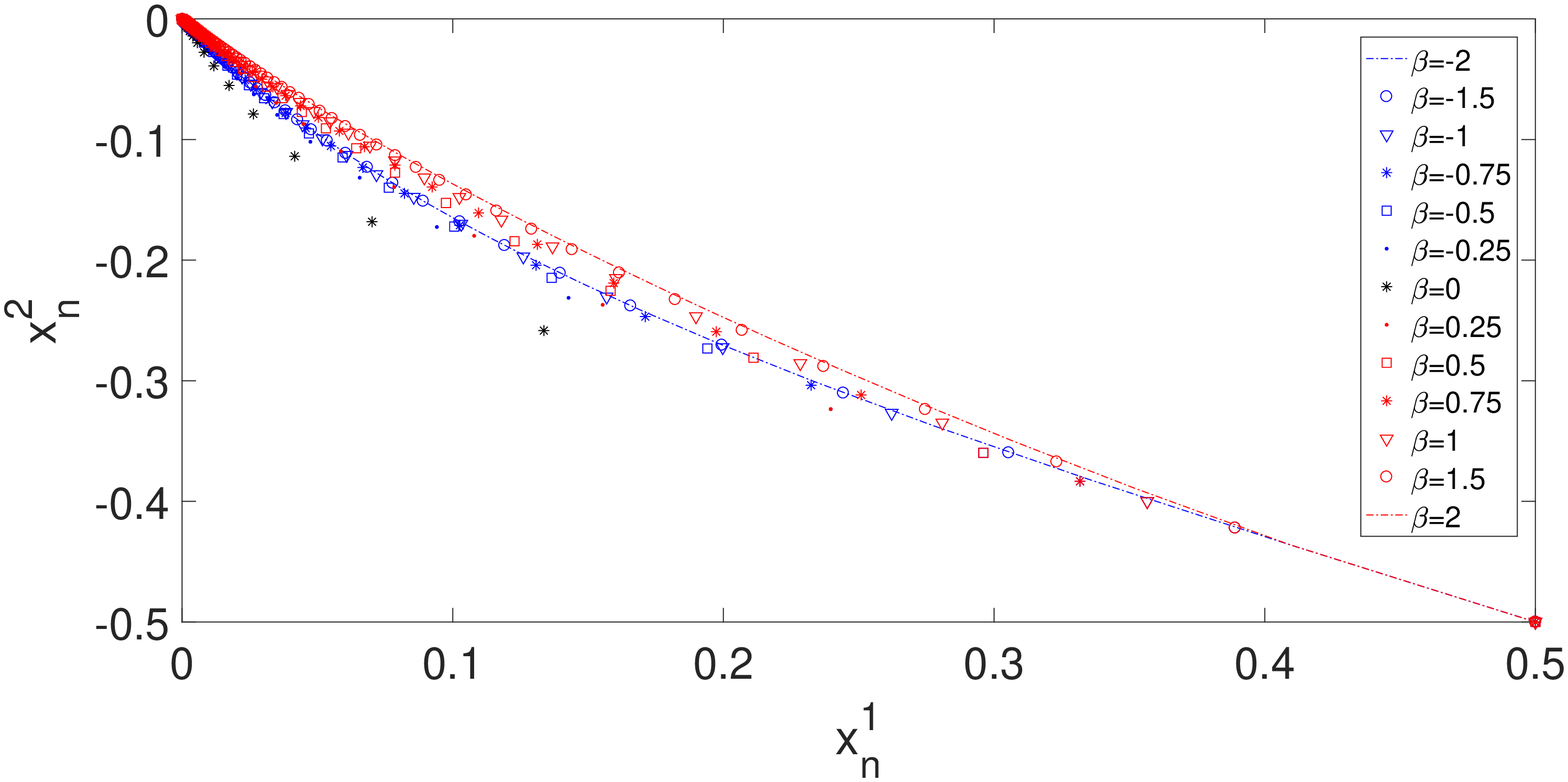}
  \caption{$\a=0$}
\end{subfigure}

\begin{subfigure}{.33\textwidth}
  \centering
  \includegraphics[width=.99\linewidth]{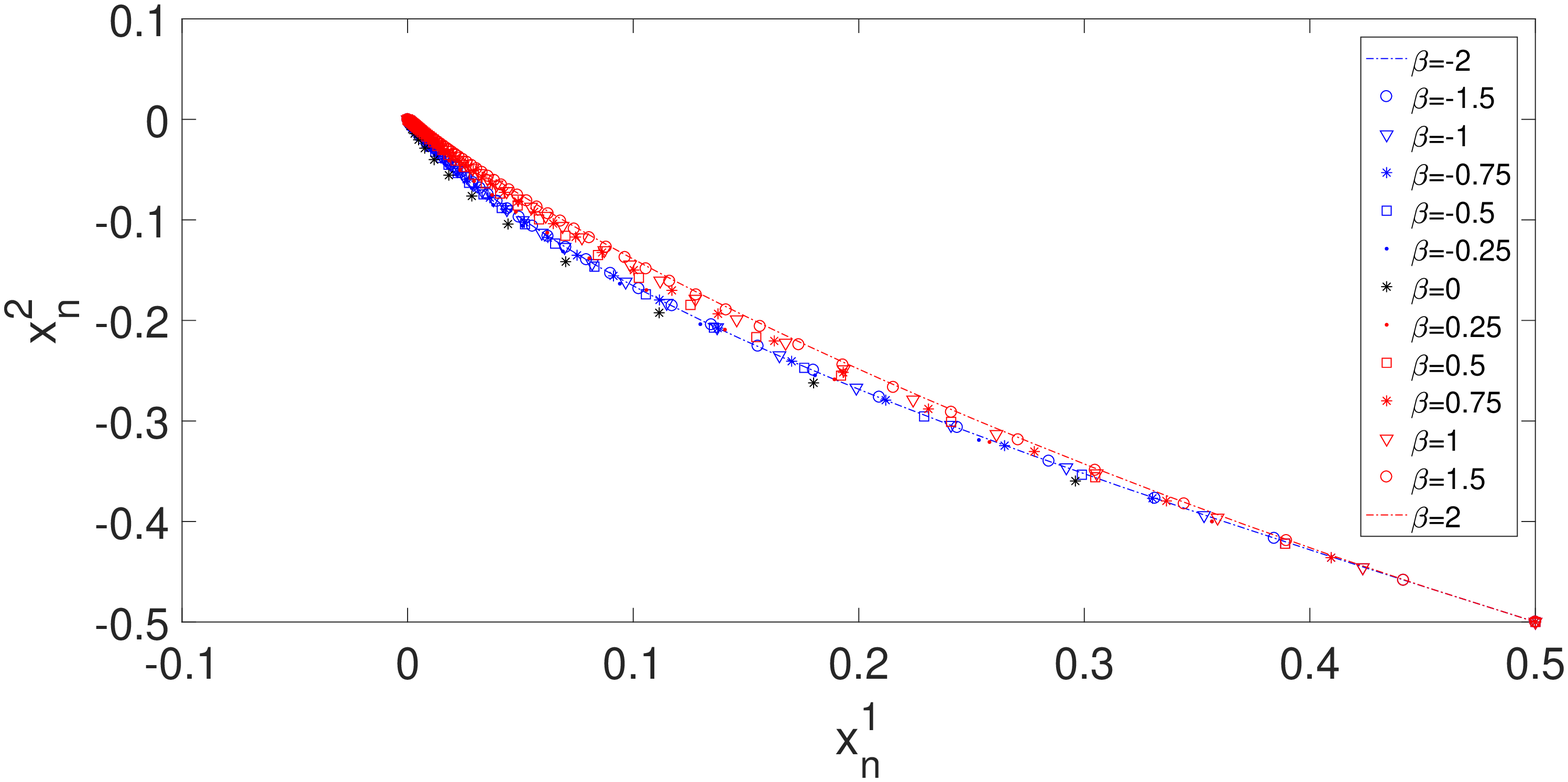}
  \caption{$\a=0.5$}
\end{subfigure}
\begin{subfigure}{.33\textwidth}
  \centering
  \includegraphics[width=.99\linewidth]{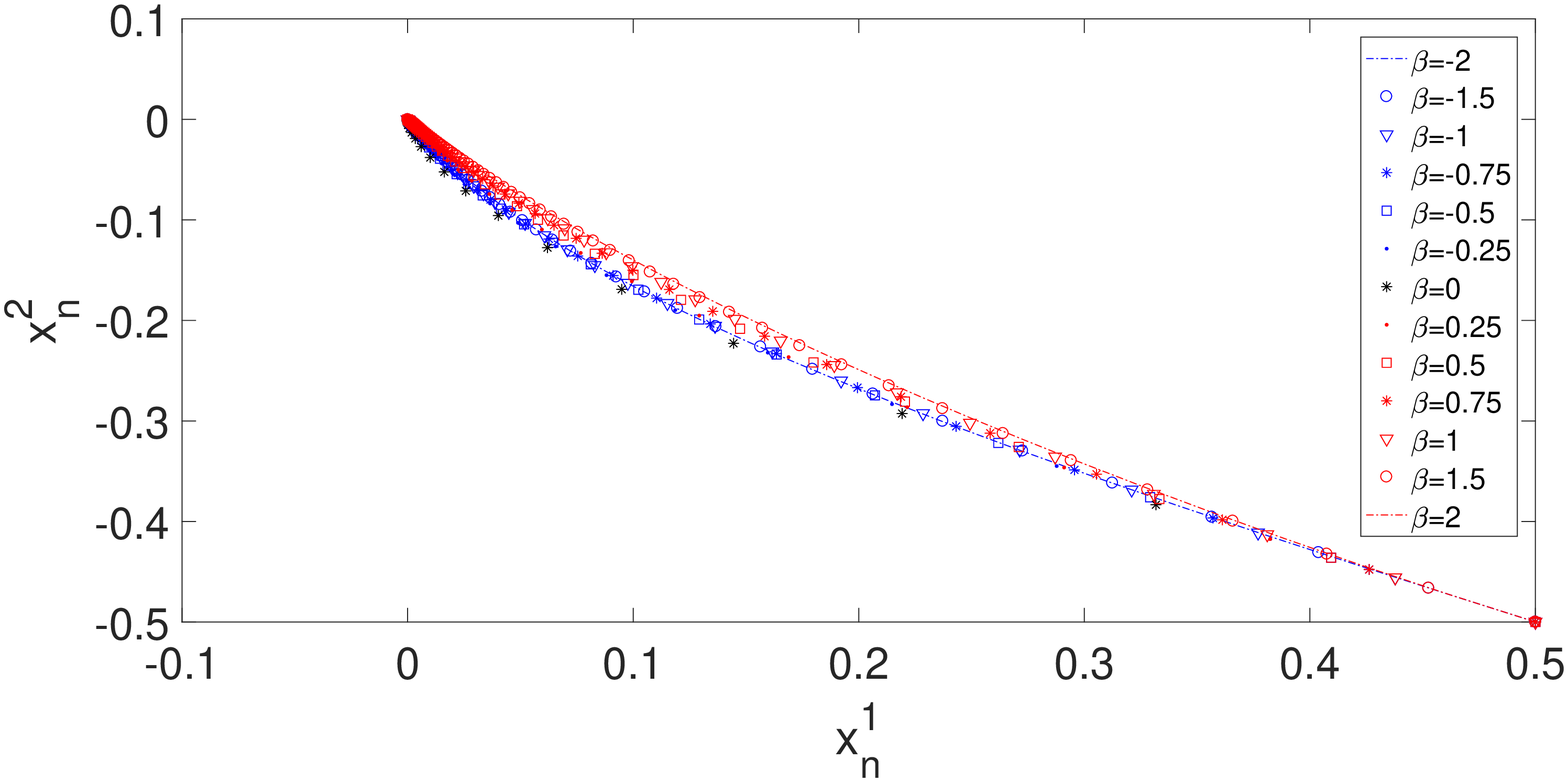}
  \caption{$\a=0.6$}
\end{subfigure}
\begin{subfigure}{.33\textwidth}
  \centering
  \includegraphics[width=.99\linewidth]{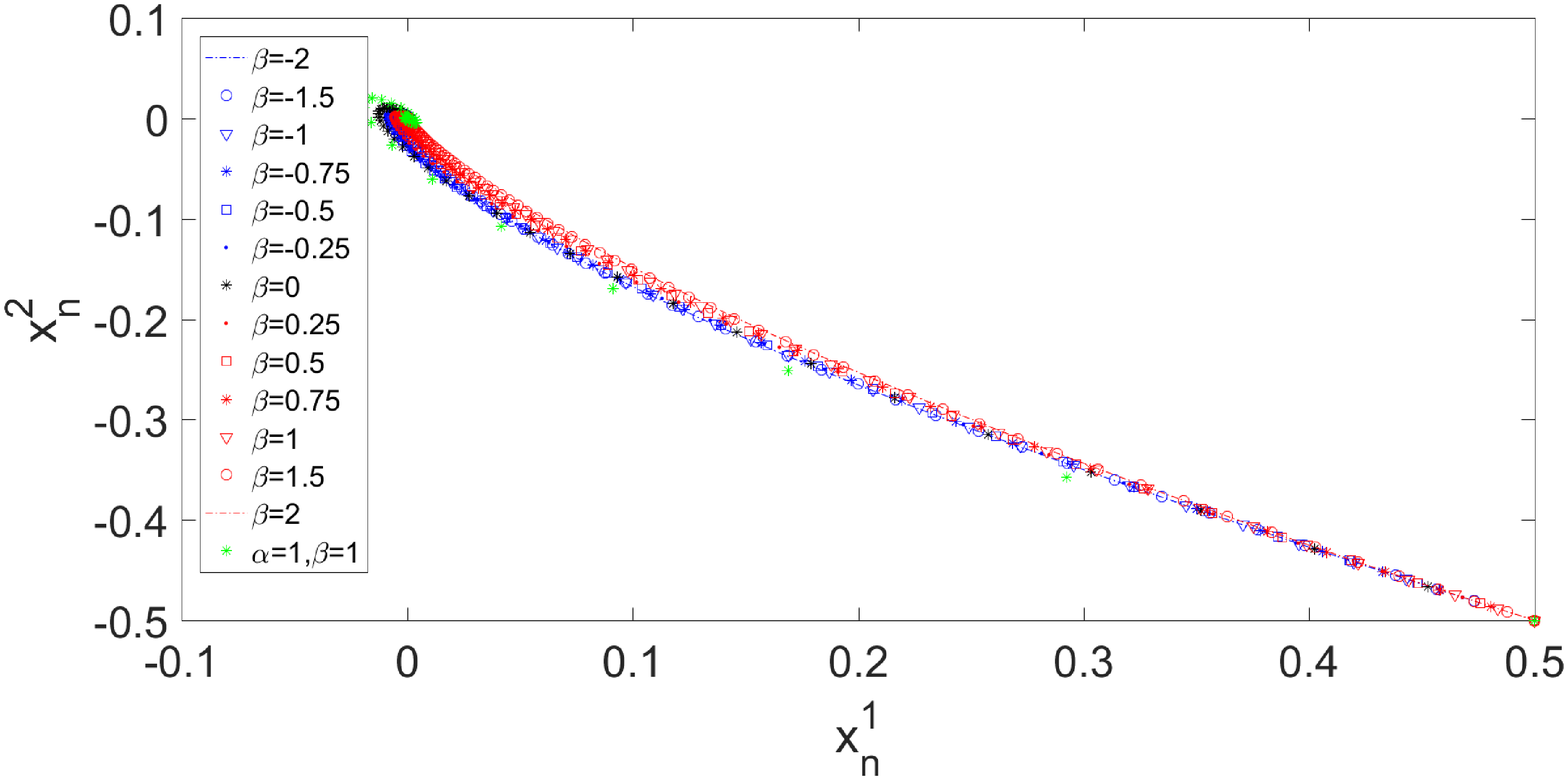}
  \caption{$\a=0.9$ and $\a=\b=1$}
\end{subfigure}
 \caption{Convergence analysis of the sequences generated by c-PADISNO with different inertial parameters.}
\end{figure}

{\bf 2.} In the next experiments, for the starting points $x_{-1}=x_0=\left(\frac12,-\frac12\right)$ we run c-PADISNO until the error $(f+g)(x_n)-\min(f+g)$ becomes less than the value $10^{-300}$  and the error $\|x_n-x^*\|$ becomes less than the value $10^{-300},$ respectively.  
Based on our experiments we conclude the following.
\vskip0.3cm
{\bf A. For a fixed inertial parameter $\a_n$, by choosing the inertial parameter $\b_n<0,$  c-PADISNO may have a better convergence behaviour than in the case $\b_n>0.$}

Indeed, the next experiment, depicted at Figure 2, shows the usefulness of allowing negative values for the inertial parameters. Note that taking negative inertial parameters can be thought as we make a backward inertial step. Although there is no geometrical interpretation for this, the experiment reveals that considering negative inertial parameters in c-PADISNO we may obtain a better behaviour of the algorithm than in the case when we use positive inertial parameters.

\begin{figure}[hbt!]
\begin{subfigure}{.5\textwidth}
  \centering
  \includegraphics[width=.99\linewidth]{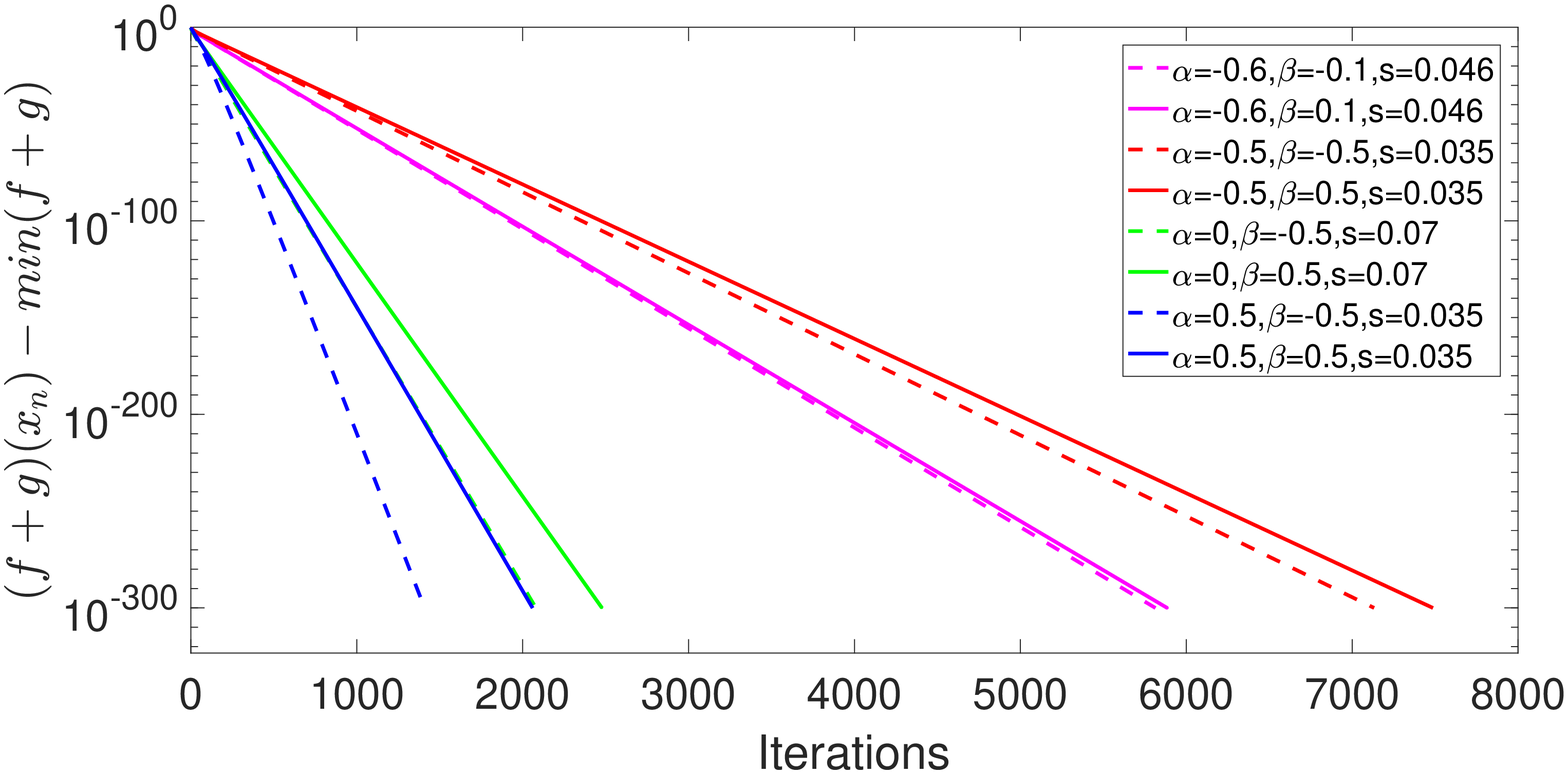}
  \caption{$x_{-1}=x_0=(0.5,-0.5).$}
  \label{fig2:sfig21}
\end{subfigure}
\begin{subfigure}{.5\textwidth}
  \centering
  \includegraphics[width=.99\linewidth]{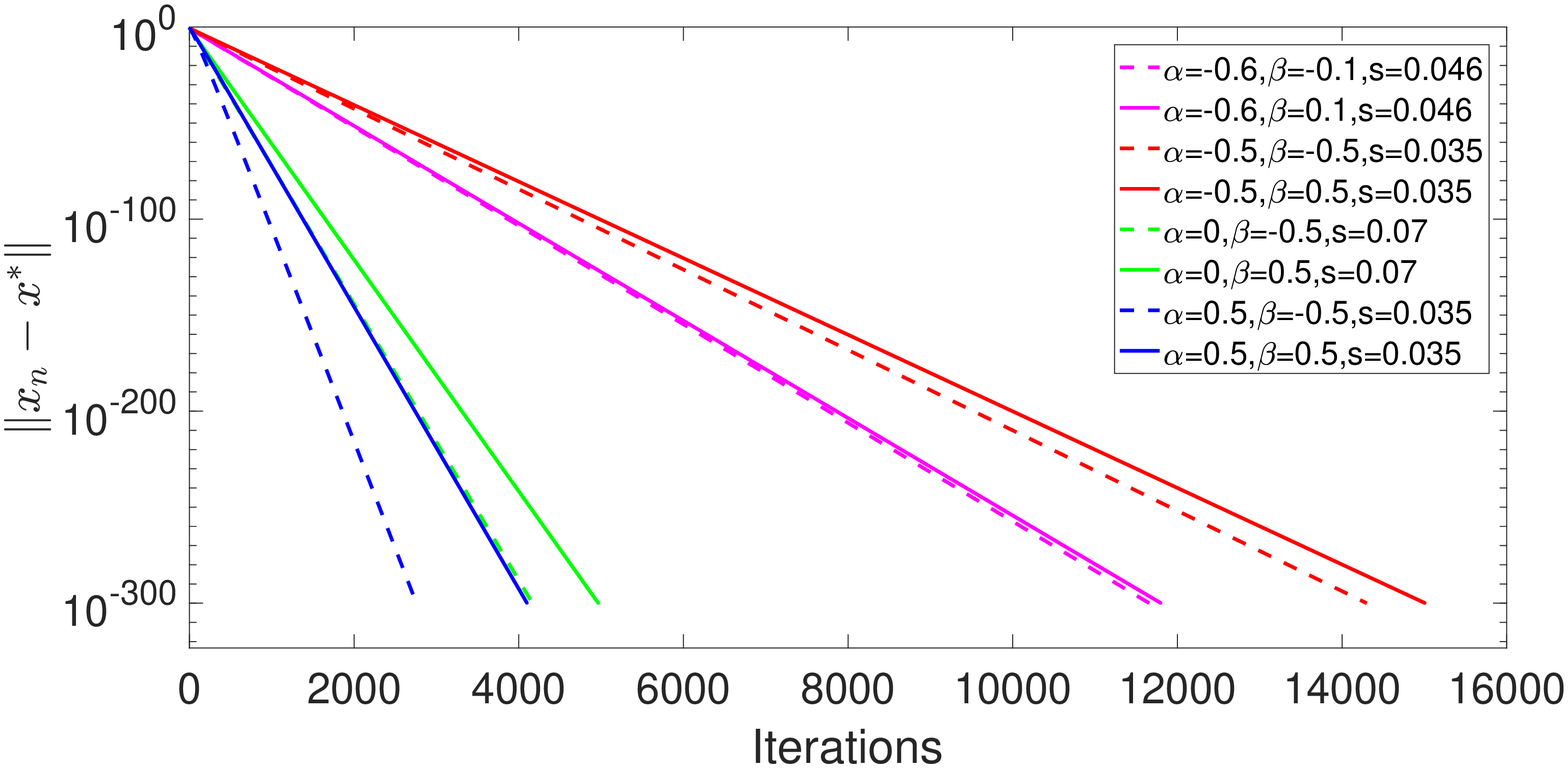}
  \caption{ $x_{-1}=x_0=(0.5,-0.5).$}
  \label{fig2:sfig22}
\end{subfigure}
 \caption{Allowing negative $\b_n$ in c-PADISNO may result in better performance.}
\end{figure}

{\bf B.  The convergence behaviour of the sequences generated by c-PADISNO in case of the inertial parameters $\a_n<0,\,\b_n<0$ may outperform the case $\a_n>0,\,\b_n>0$, even if in the latter case we  allow a better step size.}

In the next experiment, depicted at Figure 3,  we show that even if we choose negative inertial parameters $\a_n$ and $\b_n$ in c-PADISNO, we may obtain a better behaviour of the algorithm than in the case when we use positive inertial parameters. Surprisingly we obtain a better performance even if the step size in the case of negative $\a_n$ and $\b_n$ is worse than the step size of the case of positive $\a_n$ and $\b_n.$

\begin{figure}[hbt!]
\begin{subfigure}{.5\textwidth}
  \centering
  \includegraphics[width=.99\linewidth]{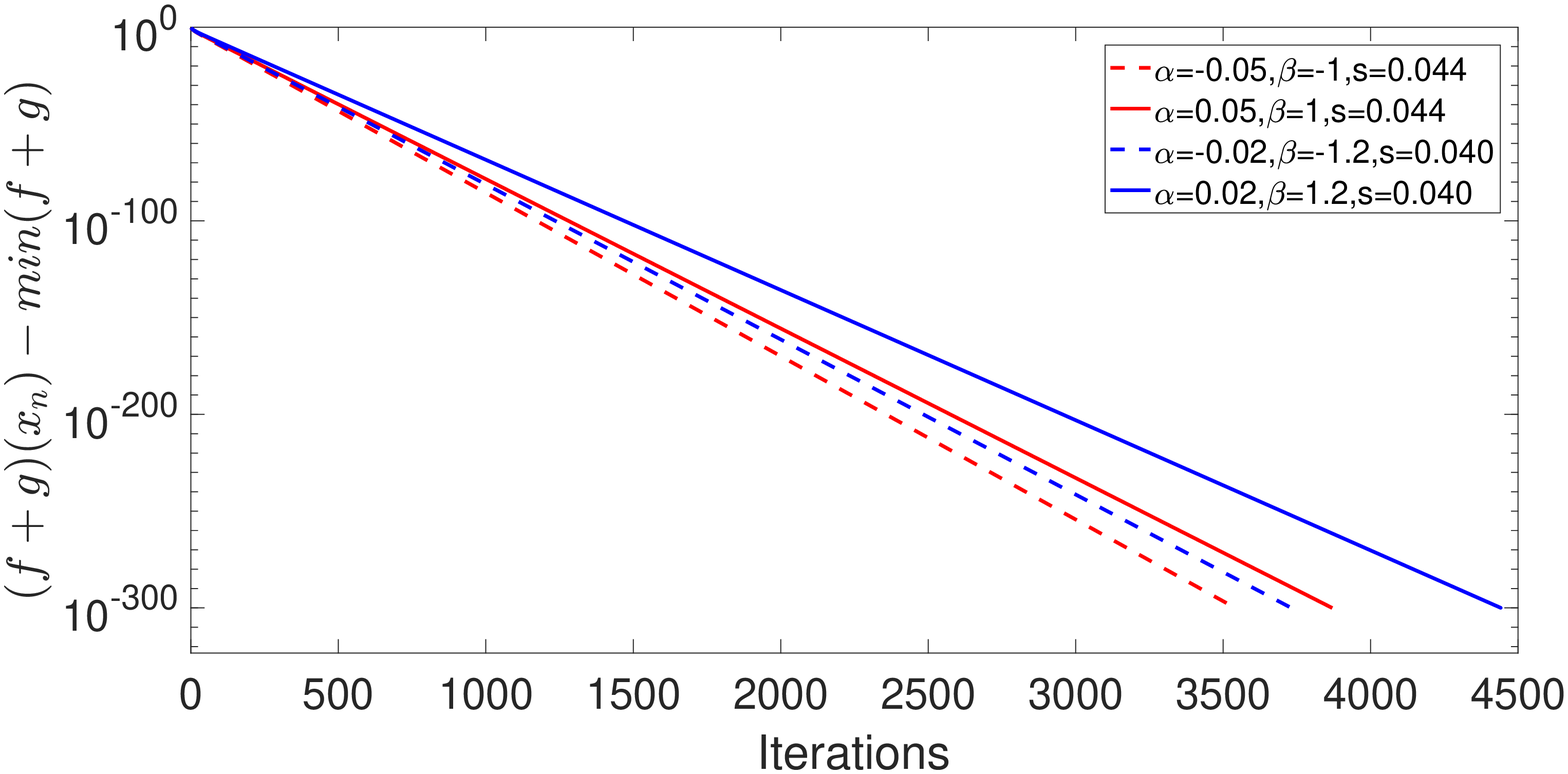}
  \caption{$x_{-1}=x_0=(0.5,-0.5).$}
  \label{fig3:sfig31}
\end{subfigure}
\begin{subfigure}{.5\textwidth}
  \centering
  \includegraphics[width=.99\linewidth]{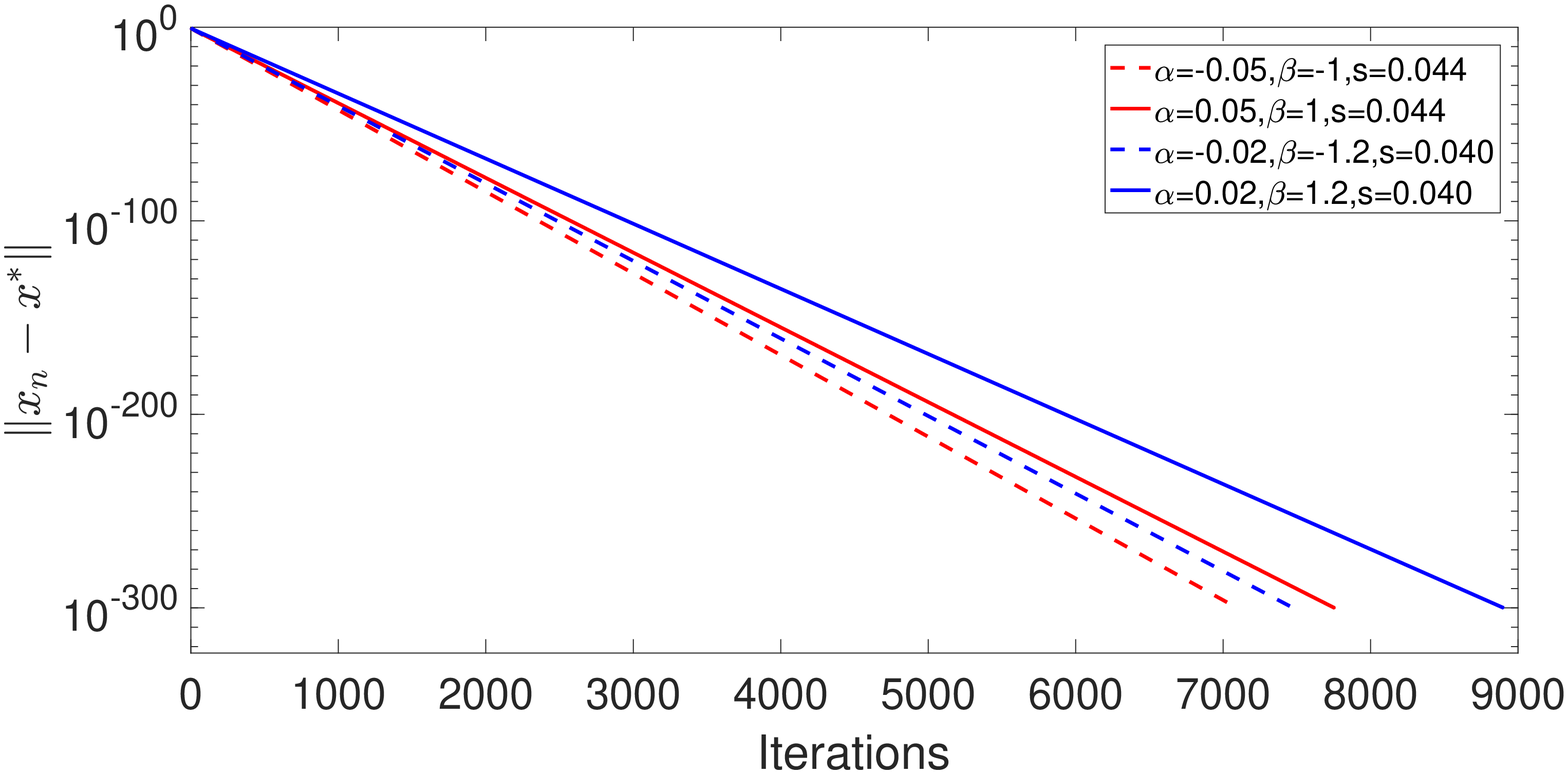}
  \caption{ $x_{-1}=x_0=(0.5,-0.5).$}
  \label{fig3:sfig32}
\end{subfigure}
 \caption{Negative inertial parameters $\a_n$ and $\b_n$ in c-PADISNO may lead to better performance.}
\end{figure}

{\bf C. Better performance of c-PADISNO in the case $\b_n\not\equiv 0$ and in the case $\a_n\not\To1,\,\b_n\not\To 1.$}

In the next experiment, depicted at Figure 4,  we show that the best choice of the inertial parameters in c-PADISNO is not  $\b_n\equiv0$, which corresponds to i-PIANO method, neither $\a=\b=1$  which corresponds to FISTA method. Indeed, note that in FISTA method the inertial parameters are equal and satisfy $\a_n\To 1,\,n\To+\infty.$

 We consider the following instances:
$$(\a,\b,s)\in\left\{\left(\frac{1}{10},\frac15,\frac{7}{100}\right),\left(\frac{1}{2},\frac12,\frac{7}{200}\right),\left(\frac{3}{5},\frac32,\frac{7}{500}\right),
\left(\frac{9}{10},\frac14,\frac{9}{1000}\right), \left(\frac{9}{10},0,\frac{7}{500}\right),\left(1,1,\frac{71}{1000}\right)\right\}.$$

Note that the case $\a=\frac{9}{10},\,\b=0,\,s=\frac{7}{500}$ corresponds to i-PIANO method.  The experiment reveals that  even with worse step size and the same inertial parameter $\a=\frac{9}{10},$  c-PADISNO with $\b\neq 0$ has a better behaviour than the case $\b=0$. Further, if we fix the step size, we can choose $\a$ and $\b$ such that our algorithm with these parameters has a much better behavior than the case $\b=0.$

Further, for comparison purposes, we implemented the case $\a=\b=1,\,s=\frac{71}{1000}=\frac{1}{L_g},$ which corresponds to FISTA method.

\begin{figure}[hbt!]
\begin{subfigure}{.5\textwidth}
  \centering
  \includegraphics[width=.99\linewidth]{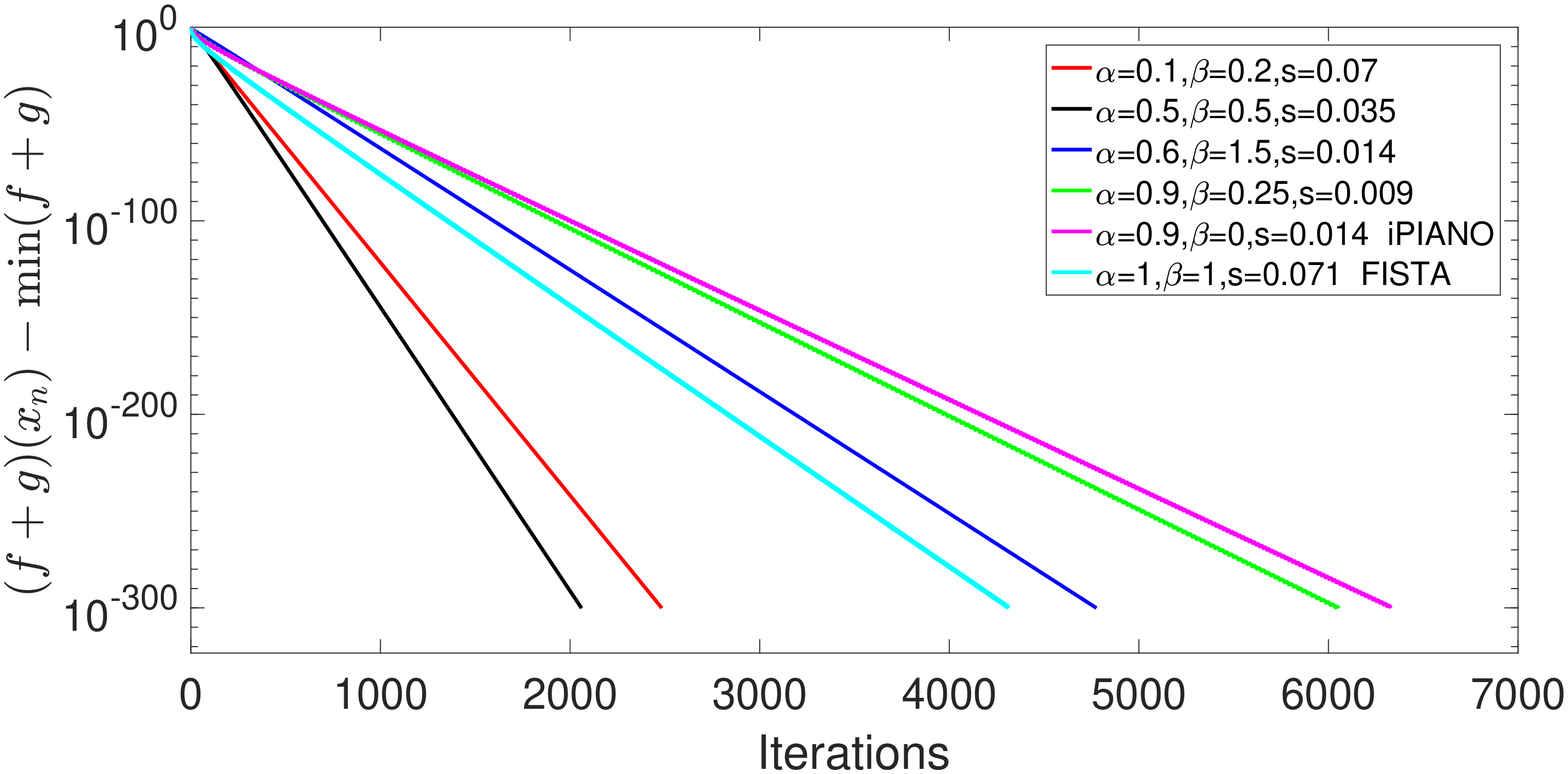}
  \caption{$x_{-1}=x_0=(0.5,-0.5).$}
  \label{fig4:sfig41}
\end{subfigure}
\begin{subfigure}{.5\textwidth}
  \centering
  \includegraphics[width=.99\linewidth]{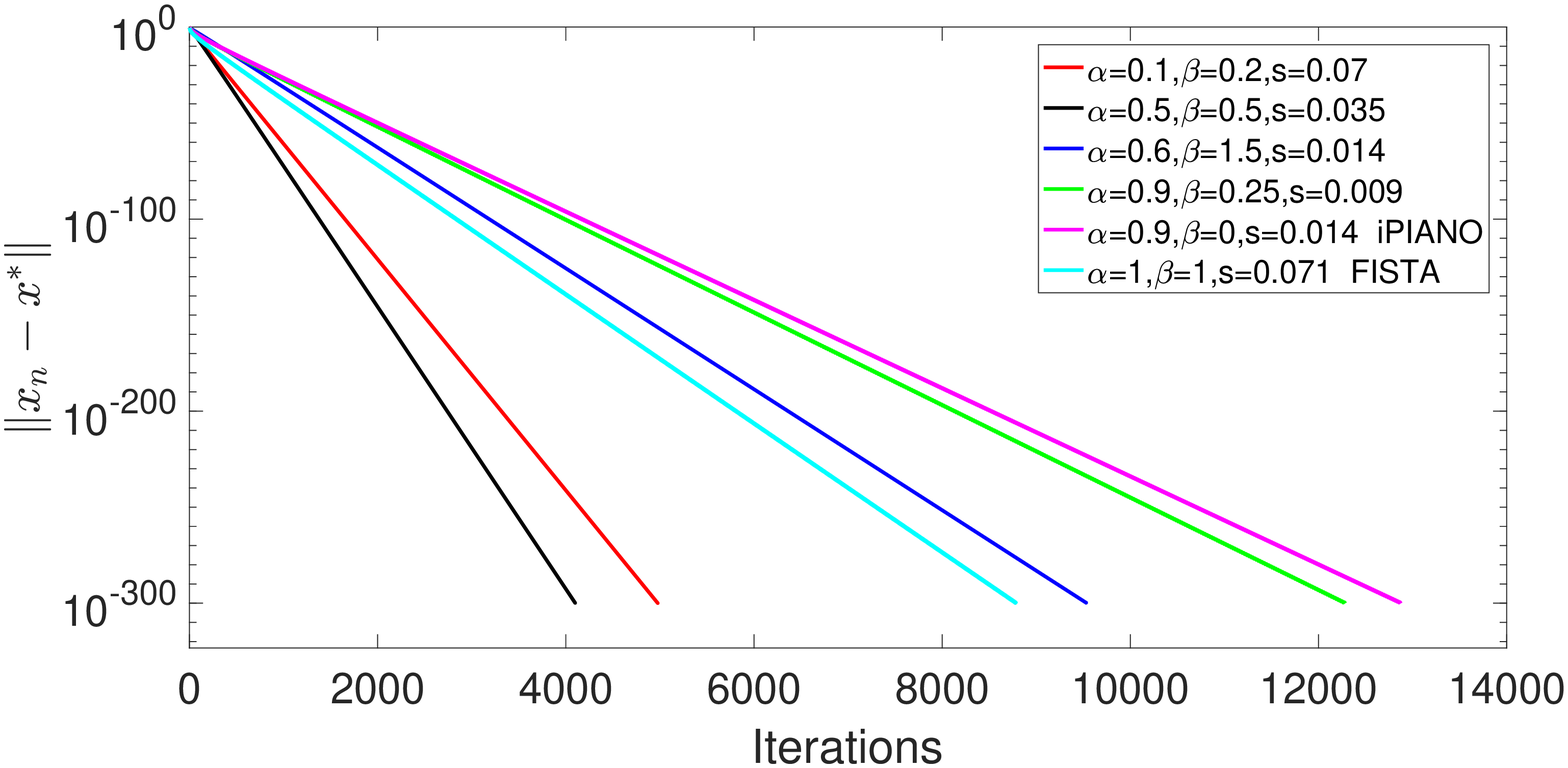}
  \caption{ $x_{-1}=x_0=(0.5,-0.5).$}
  \label{fig4:sfig42}
\end{subfigure}
 \caption{Choosing  $\b\neq 0$ and $\a\neq 1,\,\b\neq 1$ in c-PADISNO lead to better performance.}
\end{figure}

\subsection{The organization of the paper}
The outline of the paper is the following. In the next section we present some notions and preliminary results necessary for carrying out our analysis. In section 3 we state an abstract convergence theorem that can be seen as an extension of the abstract convergence result obtained in \cite{L} in the context of the minimization of a smooth function. Further, by using this abstract result we derive some rates for a sequence generated by an abstract algorithm, in terms of the KL exponent of the abstract objective function. The proofs are postponed to the Appendix.
In section 4 we study the convergence behaviour of the sequences generated by the numerical schemes PADISNO and  c-PADISNO. We prove Theorem \ref{convergence} by showing  that the regularization $H$ in its hypotheses satisfies the assumptions of the abstract convergence theorem obtained in section 3. The Kurdyka-{\L}ojasiewicz property is a key tool in our analysis. We refer the reader also to \cite{attouch-bolte2009}, \cite{att-b-red-soub2010} \cite{CP}, \cite{C}, \cite{FGP}, \cite{BCL} and \cite{OCBP} for literature concerning proximal-gradient splitting methods in the non-convex case relying on the Kurdyka-{\L}ojasiewicz property. In section 5 we prove Theorem \ref{convergencerates} by applying the abstract result concerning convergence rates obtained in section 3. Some important consequences are also discussed. In section 6 we show the usefulness of considering different and also negative inertial parameters in PADISNO, by applying our algorithm to the problem of reconstructing a blurred and noisy image. Finally, we conclude our paper and we outline some avenues of research for the future.

\section{Preliminaries}\label{sec2}

In this section we introduce some basic notions and present preliminary results that will be used in the sequel. The finite-dimensional spaces considered in the  manuscript are endowed with the Euclidean norm topology. The {\it domain} of the function  $f:\R^m\rightarrow \R \cup \{+\infty\}$ is defined by $\dom f=\{x\in\R^m:f(x)<+\infty\}$.
We say that $f$ is {\it proper}, if $\dom f\neq\emptyset$. For the following generalized subdifferential notions and their basic properties we refer to \cite{boris-carte, rock-wets}.

Let $f:\R^m\rightarrow \R \cup \{+\infty\}$ be a proper and lower semicontinuous function. For $x\in\dom f$, the
{\it Fr\'{e}chet (viscosity) subdifferential} of $f$ at $x$ is defined as
$$\hat{\partial}f(x)= \left \{v\in\R^m: \liminf_{y\rightarrow x}\frac{f(y)-f(x)-\<v,y-x\>}{\|y-x\|}\geq 0 \right \}.$$ For
$x\notin\dom f$, we set $\hat{\partial}f(x):=\emptyset$. The {\it limiting (Mordukhovich) subdifferential} is defined at $x\in \dom f$ by
$$\partial f(x)=\{v\in\R^m:\exists x_k\rightarrow x,f(x_k)\rightarrow f(x)\mbox{ and }\exists v_k\in\hat{\partial}f(x_k),v_k\rightarrow v \mbox{ as }k\rightarrow+\infty\},$$
while for $x \notin \dom f$, we set $\partial f(x) :=\emptyset$. It is obvious that $\hat\partial f(x)\subseteq\partial f(x)$ for each $x\in\R^m$.

In case $f$ is convex, these subdifferential notions coincide with the concept of {\it convex subdifferential}, that is
$\hat\partial f(x)=\partial f(x)=\{v\in\R^m:f(y)\geq f(x)+\<v,y-x\> \ \forall y\in \R^m\}$ for all $x\in\dom f$. We recall the well known identity between the proximal point operator of the convex function $f$ and the resolvent operator of its subdifferential $\p f$ that is, the equality
$\prox\nolimits_{f}(x)=(I+\p f)^{-1}(x)$ holds for all $x\in \R^m$, where $I:\R^m\To\R^m$ denotes the identity operator.

 A useful property of the graph of the limiting subdifferential is the following closedness criteria : if $(x_k)_{k\geq 0}$ and $(v_k)_{k\geq 0}$ are sequences in $\R^m$ such that
$v_k\in\partial f(x_k)$ for all $k \geq 0$, $(x_k,v_k)\To (x,v)$ and $f(x_k)\rightarrow f(x)$ as $k\rightarrow+\infty$, (obviously $x\in\dom f$), then
$v\in\partial f(x)$.

In this non-smooth setting we have the following Fermat rule: if $x\in\R^m$ is a local minimizer of $f$, then $0\in\partial f(x)$. We underline that in case the function $f$ is continuously differentiable around $x \in \R^m$ we have $\partial f(x)=\{\nabla f(x)\}$. We denote by
$$\crit(f)=\{x\in\R^m: 0\in\partial f(x)\}$$ the set of {\it (limiting)-critical points} of $f$.

We will also need the sum rule that the limitting subdifferential satisfies, that is: if $f:\R^m\To \R \cup \{+\infty\}$ is proper and lower semicontinuous and
$g:\R^m\To \R$ is a continuously differentiable function, then $\partial (f+g)(x)=\partial f(x)+\nabla g(x)$ for all $x\in\R^m$,
hence $$\crit(f+g)=\{x\in\R^m: 0\in\partial f(x)+\n g(x)\}.$$

Another important notion that we need in the sequel is the Kurdyka-\L{}ojasiewicz property of a function.
In the following definition (see \cite{att-b-red-soub2010, b-sab-teb}) we use the {\it distance function}
to a set, defined for $A\subseteq\R^m$ as $\dist(x,A)=\inf_{y\in A}\|x-y\|$ for all $x\in\R^m$.

\begin{definition}[\rm Kurdyka-\L{}ojasiewicz property]\label{KL-property}  Let $f:\R^m\To \R \cup \{+\infty\}$ be a proper
and lower semicontinuous function. We say that $f$ satisfies the {\it Kurdyka-\L{}ojasiewicz (KL) property} at
$\ol x\in \dom\partial f=\{x\in\R^m:\partial f(x)\neq\emptyset\}$
if there exist $\eta \in(0,+\infty]$ and a concave and continuous function $\varphi:[0,\eta)\To [0,+\infty)$ such that $\varphi(0)=0$, $\varphi$ is continuously differentiable on $(0,\eta)$ and $\varphi'(s)>0$ for all
$s\in(0, \eta)$, further there exists a neighborhood $U$ of $\ol x$  such that for all $x$ in the
intersection
$$U\cap \{x\in\R^m: f(\ol x)<f(x)<f(\ol x)+\eta\}$$ the following inequality holds
$$\varphi'(f(x)-f(\ol x))\dist(0,\partial f(x))\geq 1.$$
If $f$ satisfies the KL property at each point in $\dom\partial f$, then $f$ is called a {\it KL function}.
\end{definition}
The function $\varphi$ in the above definition is called a desingularizing function \cite{BBJ}.
The origins of this notion go back to the pioneering work of \L{}ojasiewicz \cite{lojasiewicz1963}, where it is proved that for a
real-analytic function $f:\R^m\To\R$ and a critical point $\ol x\in\R^m$ (that is $\nabla f(\ol x)=0$), there exists $\theta\in[1/2,1)$ such that the function
$|f-f(\ol x)|^{\theta}\|\nabla f\|^{-1}$ is bounded around $\ol x$.  In Definition \ref{KL-property} this corresponds to the situation when the desingularizing function has the form
$\varphi(s)=C(1-\theta)^{-1}s^{1-\theta}$. The result of \L{}ojasiewicz allows the interpretation of the KL property as a re-parametrization of the function values in order to avoid flatness around the
critical points. Kurdyka \cite{kurdyka1998} extended this property to differentiable functions definable in an o-minimal structure.
Further extensions to the nonsmooth setting can be found in \cite{b-d-l2006, att-b-red-soub2010, b-d-l-s2007, b-d-l-m2010}.

At first sight the KL property of functions seems restrictive, nevertheless according to \cite{b-sab-teb}, this property is ubiquitous in applications.  To the class of KL functions belong semi-algebraic, globally sub-analytic, uniformly convex and
convex functions satisfying a growth condition. We refer the reader to
\cite{b-d-l2006, att-b-red-soub2010, b-d-l-m2010, b-sab-teb, b-d-l-s2007, att-b-sv2013, attouch-bolte2009} and the references therein  for more details regarding all the classes mentioned above and illustrating examples.

A related notion that  we need is the notion of a KL exponent, which is defined \cite{att-b-red-soub2010,att-b-sv2013,LP} as follows.
\begin{definition}[\rm KL exponent] \label{KLexponent} For a proper closed function $f$ satisfying the KL property at $\ol x\in\dom(\p f)$
if the corresponding  function $\varphi$ can be chosen as $\varphi(t)=\frac{K}{1-\t}t^{1-\t}$ for some $K > 0$ and $\t\in [0,1)$, i.e., there exist
$K, \e > 0$ and $\eta\in (0, +\infty]$ such that
$$\dist(0, \p f(x)) \ge \frac{1}{K}(f(x)-f(\ol x))^\t$$
whenever $\|x - \ol x\|\le\e$ and $f(\ol x) < f(x) < f(\ol x) + \eta$, then we say that $f$ has the KL property at $\ol x$ with an
exponent $\t$. If $f$ is a KL function and has the same exponent $\t$ at any  $\ol x\in\dom(\p f)$, then we say that $f$ is
a KL function with an exponent  $\t.$
\end{definition}

\section{Abstract convergence results}

In this section we  present several abstract convergence results and also some convergence rates in terms of the KL exponent. The proofs which use some similar techniques as in \cite{att-b-sv2013} are presented at Appendix. For other works where these techniques were used we refer to \cite{FGP,OCBP}.  Our results might become useful in the future for obtaining the convergence of a sequence generated by a forward-backward inertial algorithm in the non-convex setting, where the gradient step is also evaluated in an iteration that contains the inertial term.

In what follows we formulate some conditions that beside the KL property at a point of a proper, lower semi-continuous  function lead to a convergence result.
Consider a sequence $(x_n)_{n\in\N}\subseteq\R^m$ and fix the positive constants $a,b> 0,\,c_1,c_2\ge0,\,c_1^2+c_2^2\neq 0.$ Let $F : \R^m\times\R^m\To \R \cup \{+\infty\}$ be a proper, lower semi-continuous  function. Consider further  a sequence $(u_n)_{n\in\N} := (v_n,w_n)_{n\in\N}\subseteq\R^m\times\R^m$ which is related to the sequence $(x_n)_{n\in\N}$    (with the convention $x_{-1}=x_0\in\R^m$) via the conditions (H1)-(H4) below.
\vskip0.3cm
(H1) For each $n\in\N$ it holds
$$ a\|x_{n+1}-x_{n}\|^2\le F(u_{n})-F(u_{n+1}).$$

(H2) For each $n \in\N,\,n\ge 1$ one has
$$\dist(0,\p F(u_n))\le b(\|x_n-x_{n-1}\|+\|x_{n-1}-x_{n-2}\|).$$

(H3)   For each $n \in\N$ and every $u=(x,x)\in\R^m\times\R^m$ one has
 $$\|u_n-u\|\le c_1\|x_n-x\|+c_2\|x_{n-1}-x\|.$$

(H4) There exists a subsequence $(u_{n_j} )_{j\in\N}$ of $(u_n)_{n\in\N}$ and $u^*\in\R^m\times\R^m$
 such that
$$u_{n_j}\To u^*\mbox{ and }F(u_{n_j})\To F(u^*),\mbox{ as }j\To+\infty.$$

\begin{remark} One can observe that the conditions (H1) and (H2) are very similar to those in \cite{att-b-sv2013}, \cite{FGP} and \cite{OCBP}, however due to the form of our sequence $(u_n)_{n\in\N}$, there are some major differences. First of all observe that the conditions in \cite{att-b-sv2013} or \cite{FGP} can be rewritten into our setting by considering that the sequence $(u_n)_{n\in\N}$ has the form $u_n=(x_n,x_{n})$ for all $n\in\N$ and the lower semicontinuous function $f$ considered in \cite{att-b-sv2013} satisfies $f(x_n)=F(u_n)$ for all $n\in\N.$ Further, in \cite{OCBP} the sequence $(u_n)_{n\in\N}$ has the special form  $u_n=(x_n,x_{n-1})$  for all $n\in\N.$

Let us mention that in \cite{HLMY} some relaxed versions of (H1) and (H2) were assumed  for a special function $F$ in order to obtain the
linear convergence of an inexact  descent method in a general framework.

\begin{itemize}
\item Our condition (H3) is automatically satisfied for the sequence considered in \cite{att-b-sv2013} that is $u_n=(x_n,x_n)$ with $c_1=\sqrt{2},\,c_2=0$ and also for the sequence considered in \cite{OCBP} $u_n=(x_n,x_{n-1})$ with $c_1=c_2=1.$
\item In \cite{att-b-sv2013} and \cite{FGP} the condition (H1) reads as
$$ a_n\|x_{n+1}-x_{n}\|^2\le F(u_n)-F(u_{n+1}),$$
 where $a_n=a>0$ in \cite{att-b-sv2013} and $a_n>0$ in \cite{FGP}, which are formally  identical to our assumption but our sequence $u_n$ has a more general form, meanwhile in \cite{OCBP} (H1) is
$$ a\|x_n-x_{n-1}\|^2\le F(u_n)-F(u_{n+1}).$$
\item The corresponding relative error (H2) in \cite{att-b-sv2013} is
for each $n\ge 1$ there exists $W_n\in \p F(x_{n+1},x_{n+1})$ such that
$$ \|W_n\|\le b\|x_{n+1}-x_{n}\|$$
consequently, in some sense, our condition may have a larger relative error.
 In \cite{FGP} the condition (H2) has the form
 $$\dist(0,\p F(x_{n+1},x_{n+1}))\le b_n\|x_{n+1}-x_{n}\|+c_n,\mbox{ where }b_n>0,\,c_n\ge 0\mbox{ for all }n\in\N.$$
 Moreover, in \cite{OCBP} is considered $(u_n)_{n\in\N}=(x_n,x_{n-1})_{n\in\N}$, hence their condition (H2) has the form:

 for each $n\ge 1$ there exists $W_n\in \p F(x_{n+1},x_n)$ such that
$$ \|W_n\|\le b(\|x_{n+1}-x_{n}\|+\|x_n-x_{n-1}\|).$$
\item Condition (H4) is identical to condition (H3)  in \cite{att-b-sv2013} and \cite{OCBP}. In \cite{FGP} condition (H3) refers to some properties of the sequences $(a_n)_{n\in\N},(b_n)_{n\in\N}$ and $(c_n)_{n\in\N}.$
\end{itemize}
\end{remark}

\begin{remark} Note that our condition (H2) is equivalent to the following:

(H2') For each $n\ge 1$ there exists $W_n\in \p F(u_n)$ such that
$$ \|W_n\|\le b(\|x_{n}-x_{n-1}\|+\|x_{n-1}-x_{n-2}\|).$$

 Indeed, it is obvious that (H2') implies (H2). Further, since $\p F(u_n)$ is closed and we are in $\R^m$, the projection of $0$ on  $\p F(u_n)$, that is $\pr_{\p F(u_n)}(0)$, is non empty, hence there exists $W_n\in \p F(u_n)$ such that
$$ \dist(0,\p F(u_n))=\|W_n\|\le b(\|x_{n}-x_{n-1}\|+\|x_{n-1}-x_{n-2}\|).$$

However, if one takes instead of $\R^m$ an infinite dimensional Hilbert space $\mathcal{H},$ then condition (H2) is weaker than (H2'). This is due to the fact that in an infinite dimensional Hilbert space $\mathcal{H}$ the set $\pr_{\p F(u_n)}(0)$ might be empty, hence it is  not assured in $\p F(u_n)$  the existence of an element of minimal norm.
\end{remark}

Consequently,  our abstract convergence result stated in Lemma \ref{abstrconv} below is an extension of the corresponding results from \cite{att-b-sv2013}, \cite{FGP} and \cite{OCBP}.

Let us denote by $\omega((x_n)_{n\in\N})$ the set of cluster points of the sequence $(x_n)_{n\in\N}\subseteq\R^m,$ that is,
$$\omega((x_n)_{n\in\N}):=\left\{x^*\in\R^m:\mbox{ there exists a subsequence }(x_{n_j})_{j\in\N}\subseteq(x_n)_{n\in\N}\mbox{ such that }\lim_{j\To+\infty}x_{n_j}=x^*\right\}.$$

\begin{lemma}\label{abstrconv}
Let $F : \R^m\times\R^m\To \R \cup \{+\infty\}$ be a proper and  lower semi-continuous  function which satisfies the Kurdyka-{\L}ojasiewicz property at some point $u^*=(x^*,x^*)\in \R^m\times\R^m.$

Let us denote by $U$, $\eta$ and $\varphi: [0, \eta)\To\R_+$ the objects appearing in the definition of the
KL property at $u^*.$ Let $\sigma>\rho > 0$ be such that $B(u^*,\sigma)\subseteq U.$ Furthermore, consider the sequences $(v_n)_{n\in\N},(w_n)_{n\in\N}$ and assume that the sequences $(x_n)_{n\in\N},\,(u_n)_{n\in\N}=(v_n,w_n)_{n\in\N}\subseteq\R^m\times\R^m$ satisfy the conditions {\rm (H1), (H2)} and {\rm (H3).}

Assume  further that
\begin{equation}\label{e00}
\forall n\in\N : u_n \in B(u^*,\rho)\implies u_{n+1}\in B(u^*,\sigma)\mbox{ with }F(u_{n+1})\ge F(u^*).
\end{equation}
Moreover, the initial points $x_0,u_0$ are such that $u_0\in B(u^*,\rho),$  $F(u^*)\le F(u_0)<F(u^*)+\eta$ and
\begin{equation}\label{e01}
\|x^*-x_0\|+3\sqrt{\frac{F(u_0)-F(u^*)}{a}}+\frac{9b}{4a}\varphi(F(u_0)-F(u^*))<\frac{\rho}{c_1+c_2}.
\end{equation}

Then, the following statements hold.

One has that
$ u_n\in B(u^*,\rho)$ for all $n \in\N.$
Further, $\sum_{n=1}^{+\infty}\|x_n-x_{n-1}\|<+\infty$ and the sequence $(x_n)_{n\in\N}$ converges to a point $\ol x\in\R^m.$ The sequence $(u_n)_{n\in\N}$ converges to  $\ol u= (\ol x,\ol x),$ moreover, we have $\ol u\in B(u^*,\sigma)$ and $F(u_n)\To F(u^*)\ge F(\ol u),\, n \To+\infty.$

Assume further that {\rm (H4)} holds. Then, $\ol u\in\crit F$ and $F(u^*)=F(\ol u).$
 \end{lemma}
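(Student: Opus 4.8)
The plan is to run the standard Kurdyka--\L{}ojasiewicz descent argument, adapted to the two-block structure $u_n=(v_n,w_n)$ and to the fact that in (H2) the subgradient is controlled by \emph{two} consecutive increments. Write $\Delta_n=\|x_n-x_{n-1}\|$ (so that $\Delta_0=0$) and, whenever $F(u_n)\ge F(u^*)$, set $\varphi_n=\varphi(F(u_n)-F(u^*))$. By (H1) the sequence $(F(u_n))_{n}$ is non-increasing. I first dispose of the degenerate case: if $F(u_{n_0})=F(u^*)$ for some $n_0$, then (H1) forces $\Delta_n=0$ for all $n>n_0$, so $(x_n)$ is eventually constant and every assertion is immediate. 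Hence I may assume $F(u_n)>F(u^*)$ for all $n$; together with the monotonicity and $F(u_0)<F(u^*)+\eta$ this places each value $F(u_n)-F(u^*)$ in $(0,\eta)$, so the KL inequality at $u^*$ may be invoked at $u_n$ as soon as $u_n\in U$ --- and the membership $u_n\in B(u^*,\rho)\subseteq U$ will be secured by the induction below.

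The engine is a single per-step inequality. Assuming $u_n\in B(u^*,\rho)$, the KL inequality gives $\varphi'(F(u_n)-F(u^*))\,\dist(0,\p F(u_n))\ge1$, while (H2) gives $\dist(0,\p F(u_n))\le b(\Delta_n+\Delta_{n-1})$. Feeding these into the concavity bound $\varphi_n-\varphi_{n+1}\ge\varphi'(F(u_n)-F(u^*))(F(u_n)-F(u_{n+1}))$ and using (H1) in the form $F(u_n)-F(u_{n+1})\ge a\Delta_{n+1}^2$ yields
$$\Delta_{n+1}^2\le\frac{b}{a}(\Delta_n+\Delta_{n-1})(\varphi_n-\varphi_{n+1}).$$
Taking square roots and applying $2\sqrt{pq}\le\tfrac1c p+cq$ with $c=\tfrac32$ gives
$$\Delta_{n+1}\le\tfrac13(\Delta_n+\Delta_{n-1})+\tfrac{3b}{4a}(\varphi_n-\varphi_{n+1})\qquad(n\ge1).$$
Summing over $n=1,\dots,N$ (recall $\Delta_0=0$, so the two shifted partial sums are each bounded by $S_{N+1}=\sum_{k=1}^{N+1}\Delta_k$, and the right-hand side telescopes) leads to $\tfrac13 S_{N+1}\le\Delta_1+\tfrac{3b}{4a}\varphi_1$, that is $S_{N+1}\le3\Delta_1+\tfrac{9b}{4a}\varphi_1$. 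Since (H1) at $n=0$ gives $\Delta_1\le\sqrt{(F(u_0)-F(u^*))/a}$ and monotonicity gives $\varphi_1\le\varphi_0$, the weight $c=\tfrac32$ is chosen precisely so that this reproduces the constants $3$ and $\tfrac{9b}{4a}$ appearing in \eqref{e01}.

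The main obstacle is that this estimate presupposes $u_n\in B(u^*,\rho)$, whereas keeping $u_n$ in that ball is itself what the estimate must deliver; I break this circularity by strong induction, which is where \eqref{e00} and \eqref{e01} enter. By (H3) applied with $u=u^*$, one has $\|u_n-u^*\|\le(c_1+c_2)\big(\|x_0-x^*\|+\sum_{k=1}^{n}\Delta_k\big)$, so $u_n\in B(u^*,\rho)$ once $\|x_0-x^*\|+\sum_{k=1}^{n}\Delta_k<\rho/(c_1+c_2)$. Assume $u_0,\dots,u_n\in B(u^*,\rho)$; then \eqref{e00} gives $F(u_{k+1})\ge F(u^*)$ and $u_{k+1}\in B(u^*,\sigma)\subseteq U$ for $k\le n$, so the per-step inequality is licit through index $n$, and summation yields $\sum_{k=1}^{n+1}\Delta_k\le3\sqrt{(F(u_0)-F(u^*))/a}+\tfrac{9b}{4a}\varphi_0$. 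Adding $\|x_0-x^*\|$ and invoking \eqref{e01} gives $\|x_0-x^*\|+\sum_{k=1}^{n+1}\Delta_k<\rho/(c_1+c_2)$, hence $u_{n+1}\in B(u^*,\rho)$, closing the induction. In particular $u_n\in B(u^*,\rho)$ for every $n$ and $\sum_{n\ge1}\Delta_n<+\infty$.

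Consequently $(x_n)$ is Cauchy, $x_n\to\ol x$ for some $\ol x\in\R^m$, and (H3) with $u=\ol u=(\ol x,\ol x)$ forces $\|u_n-\ol u\|\le c_1\|x_n-\ol x\|+c_2\|x_{n-1}-\ol x\|\to0$, so $u_n\to\ol u$; passing to the limit in $\|u_n-u^*\|<\rho$ gives $\|\ol u-u^*\|\le\rho<\sigma$, i.e. $\ol u\in B(u^*,\sigma)$. The non-increasing sequence $(F(u_n))$ converges to some $\ell\ge F(u^*)$; were $\ell>F(u^*)$, then $F(u_n)-F(u^*)\ge\ell-F(u^*)\in(0,\eta)$ for all $n$ and, $\varphi'$ being non-increasing, $\dist(0,\p F(u_n))\ge1/\varphi'(\ell-F(u^*))>0$, contradicting $\dist(0,\p F(u_n))\le b(\Delta_n+\Delta_{n-1})\to0$; hence $F(u_n)\to F(u^*)$, and lower semicontinuity gives $F(\ol u)\le F(u^*)$. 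Finally, under (H4) the distinguished subsequence must also converge to $\ol u$, so its limit is $\ol u$ and $F(u_{n_j})\to F(\ol u)$; comparing with $F(u_n)\to F(u^*)$ forces $F(\ol u)=F(u^*)$. Choosing, via (H2)/(H2'), $W_n\in\p F(u_n)$ with $\|W_n\|\le b(\Delta_n+\Delta_{n-1})\to0$, we have $W_n\in\p F(u_n)$, $u_n\to\ol u$, $W_n\to0$ and $F(u_n)\to F(\ol u)$, so the closedness of the graph of the limiting subdifferential yields $0\in\p F(\ol u)$, i.e. $\ol u\in\crit F$.
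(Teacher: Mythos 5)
Your main line is, in substance, the paper's own proof: the same per-step inequality
$3\|x_{k+1}-x_k\|\le\|x_k-x_{k-1}\|+\|x_{k-1}-x_{k-2}\|+\frac{9b}{4a}\bigl(\varphi_k-\varphi_{k+1}\bigr)$
with identical constants, the same strong induction combining (H3), \eqref{e00} and \eqref{e01} to keep $u_n\in B(u^*,\rho)$, and the same endgame (summability of $\|x_n-x_{n-1}\|$, hence the Cauchy property; the contradiction argument via monotonicity of $\varphi'$ showing $F(u_n)\To F(u^*)$; then (H4) and the closedness of the graph of $\p F$ for $\ol u\in\crit F$ and $F(\ol u)=F(u^*)$). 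All of that part is correct.

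The genuine gap is your opening reduction. You claim that if $F(u_{n_0})=F(u^*)$ for some $n_0$, then ``(H1) forces $\Delta_n=0$ for all $n>n_0$.'' This does not follow: (H1) only gives $a\Delta_{n+1}^2\le F(u_n)-F(u_{n+1})$, and that right-hand side is forced to vanish only if one already knows $F(u_{n+1})\ge F(u^*)$. This lower bound is not a hypothesis; it comes from \eqref{e00}, which is applicable only when $u_n\in B(u^*,\rho)$ --- precisely what your induction is supposed to deliver. So the degenerate case is not ``immediate''; it needs the same induction machinery. Moreover the resulting dichotomy is not exhaustive: the negation of ``$F(u_{n_0})=F(u^*)$ for some $n_0$'' is only that $F(u_n)\neq F(u^*)$ for all $n$, and a priori the non-increasing sequence $(F(u_n))$ could jump strictly below $F(u^*)$ without ever attaining it (again, only \eqref{e00} together with ball membership rules this out). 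Hence the standing assumption ``$F(u_n)>F(u^*)$ for all $n$'', on which every one of your KL invocations rests, is obtained circularly. The repair is exactly what the paper does: make no global case split, and handle the degeneracy inside the induction step, where $u_k\in B(u^*,\rho)$ is available. There \eqref{e00} yields $F(u_{k+1})\ge F(u^*)$, so that $\varphi$ is evaluated in its domain; if $x_{k+1}=x_k$ the per-step inequality holds trivially, and otherwise (H1) forces $F(u_k)>F(u_{k+1})\ge F(u^*)$ strictly, so the KL inequality may legitimately be applied at $u_k$. With this local case distinction inserted, your induction closes exactly as written and the rest of your argument stands.
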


\begin{remark} One can observe that our conditions in Lemma \ref{abstrconv} are slightly different  than those in \cite{att-b-sv2013} and \cite{OCBP}. Indeed, we must assume that $u_0\in B(u^*,\rho)$ and  in the  right hand side of \eqref{e01} we have $\frac{\rho}{c_1+c_2}.$
\end{remark}

\begin{corollary}\label{corabstrconv} Assume that the sequences from the definition of $(u_n)_{n\in N}$ have the form $v_n=x_n+\a_n(x_n-x_{n-1})$ and $w_n=x_n+\b_n(x_n-x_{n-1})$  for all $n\ge 1$,  where $(\a_n)_{n\in\N},(\b_n)_{n\in\N}$ are  bounded sequences. Let $c=sup_{n\in\N}(|\a_n|+|\b_n|).$ Then {\rm (H3)} holds with $c_1=2+c$ and $c_2=c$. Further, Lemma \ref{abstrconv} holds true if we replace \eqref{e00} in its hypotheses by
 $$\eta < \frac{a(\sigma-\rho)^2}{4(1+c)^2}\mbox{ and }F(u_n)\ge F(u^*),\mbox{ for all }n\in\N,\,n\ge 1.$$
 \end{corollary}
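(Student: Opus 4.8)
The plan is to establish the two assertions separately. First I would verify condition \textrm{(H3)}. Writing $u=(x,x)$ and using $x_n-x_{n-1}=(x_n-x)-(x_{n-1}-x)$, I would rewrite
$$v_n-x=(1+\a_n)(x_n-x)-\a_n(x_{n-1}-x),\qquad w_n-x=(1+\b_n)(x_n-x)-\b_n(x_{n-1}-x),$$
so that the triangle inequality gives $\|v_n-x\|\le(1+|\a_n|)\|x_n-x\|+|\a_n|\|x_{n-1}-x\|$ and similarly for $w_n$. Since $\|u_n-u\|=\sqrt{\|v_n-x\|^2+\|w_n-x\|^2}\le\|v_n-x\|+\|w_n-x\|$, adding the two bounds and using $|\a_n|+|\b_n|\le c$ yields
$$\|u_n-u\|\le(2+|\a_n|+|\b_n|)\|x_n-x\|+(|\a_n|+|\b_n|)\|x_{n-1}-x\|\le(2+c)\|x_n-x\|+c\|x_{n-1}-x\|,$$
which is exactly \textrm{(H3)} with $c_1=2+c$ and $c_2=c$.

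For the second assertion, I would revisit the proof of Lemma \ref{abstrconv} and observe that hypothesis \eqref{e00} is invoked only to guarantee, at each induction step, that $u_n\in B(u^*,\rho)$ forces $u_{n+1}\in B(u^*,\sigma)\subseteq U$ (so that the KL inequality is available at $u_{n+1}$) together with $F(u_{n+1})\ge F(u^*)$. The latter is now supplied directly by the new hypothesis $F(u_n)\ge F(u^*)$ for all $n\ge 1$, so it only remains to recover $u_{n+1}\in B(u^*,\sigma)$ from the bound $\eta<\frac{a(\sigma-\rho)^2}{4(1+c)^2}$. Here I would exploit the information already carried by the induction in Lemma \ref{abstrconv}, namely that each iterate satisfies $\|x_n-x^*\|<\frac{\rho}{c_1+c_2}$; this is precisely the quantity that the right-hand side of \eqref{e01} is designed to maintain, and, through \textrm{(H3)}, it keeps $u_n$ inside $B(u^*,\rho)$.

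Since \textrm{(H1)} forces $(F(u_n))_{n\in\N}$ to be non-increasing and the Lemma assumes $F(u_0)<F(u^*)+\eta$, I would combine \textrm{(H1)} with $F(u_{n+1})\ge F(u^*)$ to obtain
$$a\|x_{n+1}-x_n\|^2\le F(u_n)-F(u_{n+1})\le F(u_0)-F(u^*)<\eta,$$
hence $\|x_{n+1}-x_n\|<\sqrt{\eta/a}$. The chosen bound on $\eta$ rewrites as $\sqrt{\eta/a}<\frac{\sigma-\rho}{2(1+c)}$, so that $c_1\sqrt{\eta/a}=(2+c)\sqrt{\eta/a}<\sigma-\rho$. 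Applying \textrm{(H3)} at $u_{n+1}$ together with $\|x_{n+1}-x^*\|\le\|x_{n+1}-x_n\|+\|x_n-x^*\|$ then gives
$$\|u_{n+1}-u^*\|\le c_1\|x_{n+1}-x^*\|+c_2\|x_n-x^*\|\le c_1\|x_{n+1}-x_n\|+(c_1+c_2)\|x_n-x^*\|<(\sigma-\rho)+\rho=\sigma,$$
i.e. $u_{n+1}\in B(u^*,\sigma)$. This reproduces exactly what \eqref{e00} provided, so the remaining conclusions of Lemma \ref{abstrconv} carry over verbatim.

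The main obstacle I anticipate is the bookkeeping in the second assertion: \eqref{e00} is an abstract ``stays-in-the-ball'' hypothesis, and one must locate its single use inside the induction of Lemma \ref{abstrconv} and check that the concrete estimate above delivers the same conclusion at exactly that point, relying on the induction-maintained bound $\|x_n-x^*\|<\frac{\rho}{c_1+c_2}$ rather than on \eqref{e00} itself. By contrast, the computation of \textrm{(H3)} and the translation of the $\eta$-bound into a step-length bound are routine.
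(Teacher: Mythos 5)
Your verification of (H3) is exactly the computation the paper dismisses as ``an easy verification,'' so that part matches. For the second assertion, however, your route is genuinely different from the paper's. The paper never re-opens the proof of Lemma \ref{abstrconv}: it shows that the new hypotheses imply \eqref{e00} itself, by writing $\|u_{n+1}-u^*\|\le\|u_{n+1}-u_n\|+\|u_n-u^*\|<\|u_{n+1}-u_n\|+\rho$ and bounding the increment through the explicit form of $u_n$, namely $\|u_{n+1}-u_n\|\le(2+c)\|x_{n+1}-x_n\|+c\|x_n-x_{n-1}\|<(2+2c)\sqrt{\eta/a}<\sigma-\rho$, where \emph{both} consecutive step lengths are controlled by $\sqrt{\eta/a}$ via (H1), $F(u_n)\ge F(u^*)$ and $F(u_0)<F(u^*)+\eta$; since this argument uses nothing about the iterates beyond $u_n\in B(u^*,\rho)$, Lemma \ref{abstrconv} then applies verbatim. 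Your decomposition $\|u_{n+1}-u^*\|\le c_1\|x_{n+1}-x_n\|+(c_1+c_2)\|x_n-x^*\|$ cannot prove \eqref{e00} as stated, because $\|x_n-x^*\|<\frac{\rho}{c_1+c_2}$ does not follow from $u_n\in B(u^*,\rho)$ alone (H3 bounds $\|u_n-u\|$ from above, not from below); instead you patch the Lemma's proof internally, supplying at each invocation of \eqref{e00} the conclusion it would have delivered. This is legitimate and non-circular: the bound $\|x_n-x^*\|<\frac{\rho}{c_1+c_2}$ is indeed what Step I and the telescoped estimates \eqref{fonti}--\eqref{fontos1} of the Lemma's proof establish, via \eqref{e01}, before \eqref{e00} is next invoked --- but the bookkeeping you flag as the main obstacle is a real cost that the paper's packaging avoids entirely. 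One inaccuracy in your reading is worth correcting: in the Lemma's proof the KL inequality is always applied at $u_k\in B(u^*,\rho)$, never at $u_{n+1}$, and in fact the $\sigma$-ball half of \eqref{e00} is never used there at all --- only the inequality $F(u_{n+1})\ge F(u^*)$ is. So your recovery of $u_{n+1}\in B(u^*,\sigma)$ is harmless but superfluous, and your estimates do close the argument in any case: your constant $2+c$ and the paper's $2+2c$ both sit below $2(1+c)$, which is precisely what the hypothesis $\eta<\frac{a(\sigma-\rho)^2}{4(1+c)^2}$ is calibrated to. Net effect: both proofs are correct; the paper's buys a clean implication between hypothesis sets (corollary hypotheses $\Rightarrow$ Lemma hypotheses), while yours buys the structural observation that the $\sigma$-ball clause of \eqref{e00}, and hence the $\eta$-condition, is doing less work in the Lemma than it appears to.
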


Now we are ready to formulate the following result.

\begin{theorem}\label{thabstrconv}\rm( Convergence to a critical point.) Let $F : \R^m\times\R^m\To \R \cup \{+\infty\}$ be a proper and lower semi-continuous  function
and  assume that the sequences $(x_n)_{n\in\N},\,(u_n)_{n\in N}=(x_n+\a_n(x_n-x_{n-1}),x_n+\b_n(x_n-x_{n-1}))_{n\in\N}$
satisfy (H1) and (H2), (with the convention $x_{-1}=x_0\in\R^m$), where $(\a_n)_{n\in\N},(\b_n)_{n\in\N}$ are bounded sequences. Moreover, assume that $\omega((u_n)_{n\in\N})$ is nonempty and that $F$ has the Kurdyka-{\L}ojasiewicz property at a point $u^*=(x^*,x^*)\in \omega((u_n)_{n\in\N})$ and for $u^*$ (H4) holds.
Then, the sequence $(x_n)_{n\in\N}$ converges to $x^*$, $(u_n)_{n\in\N}$ converges to $u^*$ and $u^*\in\crit(F).$
\end{theorem}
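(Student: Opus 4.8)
The plan is to deduce Theorem \ref{thabstrconv} from Corollary \ref{corabstrconv} by applying it to a suitable tail of $(x_n)_{n\in\N}$, after first extracting the energy information hidden in (H1) and (H4). Since (H1) gives $F(u_{n+1})\le F(u_n)$ for every $n$, the sequence $(F(u_n))_{n\in\N}$ is non-increasing. By (H4) some subsequence satisfies $u_{n_j}\To u^*$ and $F(u_{n_j})\To F(u^*)$; a non-increasing real sequence possessing a convergent subsequence converges to the same limit, hence $F(u_n)\To F(u^*)$, and since the limit is approached monotonically from above, $F(u_n)\ge F(u^*)$ for all $n$. This already supplies the requirement $F(u_n)\ge F(u^*)$ of Corollary \ref{corabstrconv}. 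Summing (H1) then gives $a\sum_{n\in\N}\|x_{n+1}-x_n\|^2\le F(u_0)-F(u^*)<+\infty$, so in particular $\|x_{n+1}-x_n\|\To 0$.

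Next I would set up the localized hypotheses. Fix the KL objects $U,\eta,\varphi$ at $u^*$, choose $\sigma>\rho>0$ with $B(u^*,\sigma)\subseteq U$, and shrink $\eta$ so that $\eta<\frac{a(\sigma-\rho)^2}{4(1+c)^2}$, where $c=\sup_{n\in\N}(|\a_n|+|\b_n|)$; this is legitimate since the KL inequality persists on any smaller sublevel range. Along the (H4)-subsequence one has $u_{n_j}\To u^*$, and combined with $\|x_n-x_{n-1}\|\To 0$ and boundedness of $(\a_n),(\b_n)$ this forces $x_{n_j}\To x^*$ as well, because $v_{n_j}-x_{n_j}\To 0$. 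Consequently the three quantities $\|x^*-x_{n_j}\|$, $\sqrt{(F(u_{n_j})-F(u^*))/a}$ and $\varphi(F(u_{n_j})-F(u^*))$ all tend to $0$, using $\varphi(0)=0$ and continuity of $\varphi$. Hence I may fix an index $N:=n_j$ so large that $u_N\in B(u^*,\rho)$, that $F(u^*)\le F(u_N)<F(u^*)+\eta$, and that the left-hand side of \eqref{e01} (with $x_0,u_0$ replaced by $x_N,u_N$) falls below the slack $\frac{\rho}{c_1+c_2}$.

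With these preparations the tail $(x_{N+k})_{k\in\N}$ and the associated $(u_{N+k})_{k\in\N}$ still satisfy (H1), (H2) and, through the form of $u_n$, (H3) with $c_1=2+c,\ c_2=c$; thus Corollary \ref{corabstrconv} applies and yields that $u_{N+k}\in B(u^*,\rho)$ for all $k$, that $\sum_k\|x_{N+k+1}-x_{N+k}\|<+\infty$, that $(x_n)_{n\in\N}$ converges to some $\ol x$, that $(u_n)_{n\in\N}\To\ol u=(\ol x,\ol x)$ with $F(u_n)\To F(u^*)\ge F(\ol u)$, and, since (H4) holds at $u^*$, that $\ol u\in\crit F$. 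Finally, because $u^*\in\omega((u_n)_{n\in\N})$ while $(u_n)_{n\in\N}$ is now shown to converge to $\ol u$, the limit is unique and $\ol u=u^*$; therefore $x_n\To x^*$, $u_n\To u^*$ and $u^*\in\crit F$, which is the claim.

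The delicate point, and the step I expect to require the most care, is invoking Lemma \ref{abstrconv} on the tail despite its stated convention $x_{-1}=x_0$: the shifted sequence genuinely has $x_{N-1}\ne x_N$, so the first instance of the relative-error estimate (H2) carries an extra term $b\|x_N-x_{N-1}\|$ that the convention would suppress. This is harmless precisely because $\|x_N-x_{N-1}\|\To 0$, so for $N$ large the residual is dominated by the slack built into \eqref{e01}; making this absorption rigorous, or equivalently checking that the proof of Lemma \ref{abstrconv} tolerates a vanishing initial velocity, is the real crux of the reduction. A minor separate case is $F(u_{n_0})=F(u^*)$ for some finite $n_0$: then monotonicity forces $F(u_n)=F(u^*)$ for $n\ge n_0$, (H1) makes $x_n$ eventually constant, and (H2) gives $0\in\p F(u_n)$ directly, so the conclusion is immediate.
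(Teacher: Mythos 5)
Your proof is correct and follows essentially the same route as the paper: monotonicity of $(F(u_n))$ from (H1) together with (H4) gives $F(u_n)\downarrow F(u^*)$ and $F(u_n)\ge F(u^*)$, the (H4)-subsequence yields $x_{n_j}\to x^*$, and after shrinking $\eta$ below $\frac{a(\sigma-\rho)^2}{4(1+c)^2}$ one selects a late index at which \eqref{e01} holds and applies Corollary \ref{corabstrconv} to the tail $\mathcal{U}_n=u_{n_0+n}$, identifying the limit with $u^*$ by uniqueness of cluster points of a convergent sequence. The ``delicate point'' you flag --- that the shifted tail violates the convention $x_{-1}=x_0$, so the first instance of (H2) carries the extra term $b\|x_{n_0}-x_{n_0-1}\|$ --- is genuine and is passed over silently in the paper's proof; your absorption argument (this term tends to $0$, hence fits into the slack of \eqref{e01} for $n_0$ large) is exactly the right way to close that gap.
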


\begin{remark} As we have emphasized before, the main advantage of the abstract convergence result from this section is that can be applied also for forward-backward algorithms where the gradient step is evaluated in iterations that contain the inertial term. This is due to the fact that the sequence $(u_n)_{n\in\N}$ in the hypotheses of Corollary \ref{corabstrconv} and Theorem \ref{thabstrconv}
is assumed to have the form $(u_n)_{n\in\N}=(x_n+\a_n(x_n-x_{n-1}),x_n+\b_n(x_n-x_{n-1}))_{n\in\N}$.
\end{remark}

In what follows we provide some convergence rates in terms of the KL exponent of $F.$
We have the following result.
\begin{theorem}\label{thabstrrate}\rm(Convergence rates.) Let $F : \R^m\times\R^m\To \R \cup \{+\infty\}$ be a proper and lower semi-continuous  function and assume that the sequences $(x_n)_{n\in\N},\,(u_n)_{n\in N}=(x_n+\a_n(x_n-x_{n-1}),x_n+\b_n(x_n-x_{n-1}))_{n\in\N}\subseteq\R^m$
satisfy (H1) and (H2), (with the convention $x_{-1}=x_0\in\R^m$), where $(\a_n)_{n\in\N},(\b_n)_{n\in\N}\subseteq \R$ are bounded sequences. Moreover, assume that $\omega((u_n)_{n\in\N})$ is nonempty and that $F$ has the Kurdyka-{\L}ojasiewicz property with an exponent $\t\in[0,1)$ at a point $u^*=(x^*,x^*)\in \omega((u_n)_{n\in\N})$ and for $u^*$ (H4) holds.
Then, the sequence $(x_n)_{n\in\N}$ converges to $x^*$, $(u_n)_{n\in\N}$ converges to $u^*$ and $u^*\in\crit(F).$ Further, the following statements hold.
\begin{itemize}
\item[$(\emph{a})$] If $\t=0$ then $(F(u_n))_{n\in\N},(x_n)_{n\in\N}$ and $(u_n)_{n\in\N}$ converge in a finite number of steps;
\item[$(\emph{b})$] If $\t\in\left(0,\frac12\right]$ then there exist $A_1>0,$ $Q\in[0,1)$  and $\ol k\in\N$ such that
$F(u_{n})-F(u^* )\le A_1 Q^n$, $\|x_n-x^*\|\le A_1 Q^{\frac{n}{2}}$ and $\|u_n-u^*\|\le A_1 Q^{\frac{n}{2}}$ for  every $n\ge \ol k$;
\item[$(\emph{c})$] If $\t\in\left(\frac12,1\right)$ then there exist  $A_2>0$ and $\ol k\in\N$ such that
$F(u_n)-F(u^*)\le A_2 {n^{-\frac{1}{2\t-1}}},$ $\|x_n-x^*\|\le A_2 {n^{\frac{\t-1}{2\t-1}}}$  and $\|u_n-u^*\|\le A_2 {n^{\frac{\t-1}{2\t-1}}}\mbox{ for all }n\ge\ol k$.
\end{itemize}
\end{theorem}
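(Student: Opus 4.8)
The convergence statement itself---that $(x_n)_{n\in\N}\To x^*$, $(u_n)_{n\in\N}\To u^*$ and $u^*\in\crit(F)$---is precisely the conclusion of Theorem \ref{thabstrconv}, whose hypotheses are all assumed here, so the plan is to invoke it directly and then concentrate on the rates. Write $\Delta_n=\|x_n-x_{n-1}\|$ and $r_n=F(u_n)-F(u^*)$. By (H1) the sequence $(F(u_n))_{n\in\N}$ is nonincreasing, and since $F(u_n)\To F(u^*)$ (from the convergence analysis, using (H4)) we get $r_n\ge 0$ with $r_n\downarrow 0$. If $r_{n_0}=0$ for some $n_0$, monotonicity forces $r_n=0$ for $n\ge n_0$, and then (H1) gives $\Delta_{n+1}=0$, i.e. $x_{n+1}=x_n$; as $u_n$ is an affine function of $x_n-x_{n-1}$ this makes all three sequences eventually constant, which is the finite-termination conclusion. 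Hence I may assume $r_n>0$ for all $n$, so that for large $n$ the point $u_n$ lies in the KL neighbourhood of $u^*$ with $F(u^*)<F(u_n)<F(u^*)+\eta$ and the KL inequality with exponent $\theta$ applies.

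Next I would combine the two basic estimates. From (H1), $\Delta_n^2\le\frac1a(r_{n-1}-r_n)$ and $\Delta_{n-1}^2\le\frac1a(r_{n-2}-r_{n-1})$; from (H2) and the KL exponent inequality at $u^*$, $\frac1K r_n^{\theta}\le\dist(0,\p F(u_n))\le b(\Delta_n+\Delta_{n-1})$. Squaring the latter, using $(\Delta_n+\Delta_{n-1})^2\le 2(\Delta_n^2+\Delta_{n-1}^2)$ and the two bounds, yields the master recursion
$$r_n^{2\theta}\le \frac{2K^2b^2}{a}\,(r_{n-2}-r_n)\qquad\text{for all large }n.$$
The case $\theta=0$ is immediate: then the KL inequality gives $\frac1K\le b(\Delta_n+\Delta_{n-1})$ whenever $r_n>0$, whose right-hand side tends to $0$; so $r_n=0$ eventually and we fall back to the finite-termination situation already treated.

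For $\theta\in(0,\frac12]$ one has $2\theta\le1$, hence for large $n$ (where $r_n\le1$) $r_n\le r_n^{2\theta}$, and the master recursion becomes $r_n\le\frac{M}{1+M}r_{n-2}$ with $M=\frac{2K^2b^2}{a}$, giving geometric decay $r_n\le A_1Q^n$ for some $Q\in[0,1)$. For the iterates, $\Delta_{n+1}^2\le\frac1a(r_n-r_{n+1})\le\frac1a r_n\le\frac{A_1}{a}Q^n$, so $\Delta_{n+1}\le C Q^{n/2}$, and summing the geometric tail gives $\|x_n-x^*\|\le\sum_{k>n}\Delta_k\le A_1Q^{n/2}$ after relabelling the constant. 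The bound on $\|u_n-u^*\|$ then follows from (H3) with $c_1=2+c,\ c_2=c$ (Corollary \ref{corabstrconv}), since $\|u_n-u^*\|\le c_1\|x_n-x^*\|+c_2\|x_{n-1}-x^*\|$.

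For $\theta\in(\frac12,1)$ set $p:=2\theta>1$; the master recursion $r_n^{p}\le M(r_{n-2}-r_n)$ is of the type handled by the standard discrete comparison lemma (a two-step variant of the Attouch--Bolte argument, cf. \cite{att-b-sv2013}), giving $r_n\le A_2 n^{-1/(2\theta-1)}$. The delicate point---and the main obstacle---is the iterate rate. Here I would derive a tail estimate for $S_n:=\sum_{k>n}\Delta_k$ by summing the Young-type inequality $\Delta_{n+1}\le\frac16(\Delta_n+\Delta_{n-1})+\frac{3b}{2a}(\varphi(r_n)-\varphi(r_{n+1}))$, obtained from $\Delta_{n+1}^2\le\frac{b}{a}(\Delta_n+\Delta_{n-1})(\varphi(r_n)-\varphi(r_{n+1}))$ via $\sqrt{uv}\le\frac12(\lambda u+\lambda^{-1}v)$ and the concavity of $\varphi$; after telescoping this bounds $S_n$ by a multiple of $\varphi(r_n)$ plus the step terms $\Delta_n+\Delta_{n-1}$. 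Since $\varphi(r_n)\sim r_n^{1-\theta}=O(n^{-(1-\theta)/(2\theta-1)})$ whereas the step terms are $O(\sqrt{r_{n-2}})=O(n^{-1/(2(2\theta-1))})$, and $1-\theta<\frac12$ forces $\varphi(r_n)$ to dominate, I obtain $\|x_n-x^*\|\le S_n\le A_2 n^{(\theta-1)/(2\theta-1)}$ and then $\|u_n-u^*\|$ again by (H3). The care needed throughout is that the two-step delay $r_{n-2}-r_n$ and the two-term relative error $\Delta_n+\Delta_{n-1}$ force the delayed, two-term versions of the classical one-step recursion lemmas, and that $\varphi(r_n)$ must be verified to be the dominant term in the sublinear regime.
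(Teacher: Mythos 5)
Your proposal is correct and follows essentially the same route as the paper's proof: convergence via Theorem \ref{thabstrconv}, the dichotomy according to whether $F(u_n)$ ever equals $F(u^*)$ (finite termination in that case), the two-step master recursion $\left(F(u_n)-F(u^*)\right)^{2\t}\le \frac{2b^2K^2}{a}\left((F(u_{n-2})-F(u^*))-(F(u_n)-F(u^*))\right)$ obtained from (H1), (H2) and the KL exponent inequality, and then the Young/KL telescoping bound on $\sum_{k>n}\|x_k-x_{k-1}\|$ combined with (H3) to transfer the rates from the function values to $(x_n)$ and $(u_n)$, including the same comparison of exponents $\frac{1-\t}{2\t-1}$ versus $\frac{1/2}{2\t-1}$ in the sublinear regime. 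The only (harmless) deviations are that where the paper invokes Lemma \ref{ratel}, quoted from \cite{BN}, uniformly in all three regimes, you treat $\t=0$ and $\t\in\left(0,\frac12\right]$ by elementary inline arguments — the constant KL lower bound contradicting $\|x_n-x_{n-1}\|\To 0$, and the linearized recursion $r_n\le\frac{M}{1+M}r_{n-2}$ followed by a direct geometric tail sum for the iterates — which are equally valid.
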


\section{The convergence of the numerical schemes PADISNO and c-PADISNO}

In this section we obtain the convergence of the sequences generated by PADISNO and c-PADISNO to a critical point of the objective function $f+g.$ To this purpose we  show that an  appropriate regularization of $f+g$ satisfies the conditions (H1)-(H4) and we apply Theorem \ref{thabstrconv}. An important tool in our forthcoming analysis is the so called descent lemma, see \cite{Nest}, applied to the smooth function $g,$ that is, 
\begin{equation}\label{desc}g(y)\le g(x)+\<\n g(x),y-x\>+\frac{L_g}{2}\|y-x\|^2,\,\forall x,y\in\R^m.
\end{equation}

Now we are able to obtain a descent property using the iterates generated by PADISNO, more precisely the following result holds.

\begin{lemma}\label{descentnc}  In the settings of problem \eqref{propt}, for some starting points $x_{-1}=x_0\in\R^m,$ let $(x_n)_{n\in\N}$ be the sequence generated by the numerical scheme PADISNO. Consider the sequence
$$\d_n=\frac{1}{4s}+\frac{L_g}{4}(|\b_n|-|\b_{n-1}|-1),\,n\ge 1.$$
Then, there exist $N\in\N$ and $A>0$ such that
\begin{itemize}
\item[(i)] $\left((f+g)(x_{n})+\d_{n}\|x_{n}-x_{n-1}\|^2\right)-\left((f+g)(x_{n+1})+\d_{n+1}\|x_{n+1}-x_{n}\|^2\right)\ge A\|x_{n+1}-x_n\|^2$ and $\d_n>0$  for all $n\ge N$.
\end{itemize}
Assume that $f+g$ is bounded from below. Then, the following statements hold.
\begin{itemize}
\item[(ii)] The sequences $\left((f+g)(x_{n})+\d_{n}\|x_{n}-x_{n-1}\|^2\right)_{n\in \N}$ and $\left((f+g)(x_{n})\right)_{n\in \N}$ are convergent;
\item[(iii)] $\sum_{n\ge 1}\|x_n-x_{n-1}\|^2<+\infty.$
\end{itemize}
\end{lemma}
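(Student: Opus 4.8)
The plan is to establish the descent-type inequality (i) by carefully combining the optimality condition defining $x_{n+1}$ in PADISNO with the descent lemma \eqref{desc} applied to $g$, and then to deduce (ii) and (iii) from (i) using boundedness from below.

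First I would exploit the defining relation for $x_{n+1}$. Since $x_{n+1}$ minimizes $y\mapsto f(y)+\<\n g(z_n),y-y_n\>+\frac{1}{2s}\|y-y_n\|^2$, comparing the value at $x_{n+1}$ with the value at the competitor $y=x_n$ yields
$$f(x_{n+1})+\<\n g(z_n),x_{n+1}-y_n\>+\frac{1}{2s}\|x_{n+1}-y_n\|^2\le f(x_n)+\<\n g(z_n),x_n-y_n\>+\frac{1}{2s}\|x_n-y_n\|^2.$$
Rearranging gives a bound on $f(x_{n+1})-f(x_n)$ in terms of $\<\n g(z_n),x_n-x_{n+1}\>$ and the squared-norm terms. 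Next I would apply \eqref{desc} with $x=z_n$ and $y=x_{n+1}$ to control $g(x_{n+1})$ by $g(z_n)+\<\n g(z_n),x_{n+1}-z_n\>+\frac{L_g}{2}\|x_{n+1}-z_n\|^2$, and similarly relate $g(z_n)$ back to $g(x_n)$ using convexity/concavity-free estimates on $g$. Adding the two inequalities, the troublesome inner products $\<\n g(z_n),\cdot\>$ should telescope/cancel against each other, leaving $(f+g)(x_{n+1})-(f+g)(x_n)$ bounded above by a combination of the quadratic terms $\|x_{n+1}-y_n\|^2$, $\|x_n-y_n\|^2$, $\|x_{n+1}-z_n\|^2$.

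The hardest part will be the bookkeeping of the quadratic terms. Using $y_n=x_n+\a_n(x_n-x_{n-1})$ and $z_n=x_n+\b_n(x_n-x_{n-1})$, I would expand each squared norm into the "basis" quantities $\|x_{n+1}-x_n\|^2$, $\|x_n-x_{n-1}\|^2$ and the cross term $\<x_{n+1}-x_n,x_n-x_{n-1}\>$, then absorb the cross term via Young's inequality $2\<u,v\>\le \tau\|u\|^2+\tau^{-1}\|v\|^2$ with a carefully tuned $\tau$. The goal is to produce exactly the Lyapunov quantity $\d_n\|x_n-x_{n-1}\|^2$ on the left and $\d_{n+1}\|x_{n+1}-x_n\|^2$ subtracted off, so that the residual coefficient of $\|x_{n+1}-x_n\|^2$ is a strictly positive constant $A$. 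The appearance of $|\b_n|-|\b_{n-1}|-1$ in $\d_n$ strongly suggests that the $L_g$-factor from the descent lemma on the $\|x_{n+1}-z_n\|^2$ term is split so that part of it lands in $\d_{n+1}$ and part telescopes. Here the step-size hypothesis $s<\frac{1-2|\a|}{L_g(2|\b|+1)}$, together with $\a_n\To\a$, $\b_n\To\b$, is precisely what guarantees that for all $n\ge N$ the leftover coefficient $A$ is positive and that $\d_n>0$; I would make the limiting inequality strict and then invoke convergence of $(\a_n),(\b_n)$ to pass from the limit constants to an eventual estimate valid for $n\ge N$. For c-PADISNO I would note that convexity of $f$ upgrades the comparison inequality (using the subgradient/prox inequality $f(x_n)\ge f(x_{n+1})+\<v,x_n-x_{n+1}\>$ with $v\in\p f(x_{n+1})$ obtained from the prox optimality, giving an extra $\frac{1}{2s}\|x_{n+1}-x_n\|^2$ gain) which is exactly what permits the doubled step size $s<\frac{2(1-|\a|)}{L_g(2|\b|+1)}$.

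Finally, (ii) and (iii) follow routinely once (i) is in hand. Since $\d_n>0$ for $n\ge N$, the sequence $\big((f+g)(x_n)+\d_n\|x_n-x_{n-1}\|^2\big)_{n\ge N}$ is nonincreasing by (i), and it is bounded below because $f+g$ is bounded below and the added term is nonnegative; hence it converges, proving the first half of (ii). Summing the inequality in (i) over $n\ge N$ telescopes the Lyapunov values and, using convergence of that sequence, gives $A\sum_{n\ge N}\|x_{n+1}-x_n\|^2<+\infty$, which is (iii). For the second half of (ii), convergence of $(f+g)(x_n)$ follows since $\d_n\|x_n-x_{n-1}\|^2\To 0$ (its summand tends to $0$ by (iii) and $\d_n$ is bounded by convergence of $\b_n$), so $(f+g)(x_n)$ differs from the convergent Lyapunov sequence by a null sequence.
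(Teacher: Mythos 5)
Your skeleton (optimality comparison at $y=x_n$, a descent-lemma estimate for $g$, Young's inequality on the cross term, a limiting argument to get $A>0$ and $\d_n>0$ for $n\ge N$, then (ii)--(iii) by telescoping and boundedness from below) is the paper's skeleton, and your derivation of (ii) and (iii) from (i) is exactly right. The gap is in the one step where you deviate: you apply \eqref{desc} at $z_n$ with increment $x_{n+1}-z_n$ and then relate $g(z_n)$ back to $g(x_n)$ with a second Lipschitz-type estimate. Each of these two applications pays a penalty $\frac{L_g}{2}\|\cdot\|^2$ on its own increment, and after expanding in the basis $\|x_{n+1}-x_n\|^2$, $\|x_n-x_{n-1}\|^2$, $\<x_{n+1}-x_n,x_n-x_{n-1}\>$ this produces terms \emph{quadratic} in $\b_n$. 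The cleanest version of your route (using the two-sided descent lemma so that all $\<\n g(z_n),\cdot\>$ terms cancel) gives
\begin{equation*}
(f+g)(x_n)-(f+g)(x_{n+1})\ge\left(\tfrac{1}{2s}-\tfrac{L_g}{2}\right)\|x_{n+1}-x_n\|^2-\left(\tfrac{\a_n}{s}-L_g\b_n\right)\<x_{n+1}-x_n,x_n-x_{n-1}\>-L_g\b_n^2\|x_n-x_{n-1}\|^2.
\end{equation*}
A Lyapunov inequality of type (i) can be extracted from such a bound (for \emph{any} choice of coefficients $\d_n$, let alone the prescribed ones) only if, in the limit, $\left(\frac{1}{2s}-\frac{L_g}{2}\right)-L_g\b^2>\left|\frac{\a}{s}-L_g\b\right|$, which is a step-size restriction involving $\b^2$ and is strictly stronger than the standing assumption $s<\frac{1-2|\a|}{L_g(2|\b|+1)}$, which is linear in $|\b|$. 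Concretely, take $\a=0$, $\b=1$, $L_g=1$, $s=\frac{3}{10}<\frac13$: the hypothesis of the lemma holds, but your bound gives $\frac{1}{2s}-\frac{L_g}{2}-L_g\b^2=\frac16$ against a cross-term coefficient of modulus $1$, so the resulting quadratic form is indefinite and no $A>0$ exists. Thus your claim that the step-size hypothesis ``is precisely what guarantees'' positivity of the leftover coefficient is false for your decomposition; the fact that the prescribed $\d_n$ is linear in $|\b_n|$ is itself a signal that the correct estimate must incur penalties linear in $|\b_n|$.

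The paper avoids this by never invoking the descent lemma with the increment $x_{n+1}-z_n$. It splits $\<\n g(z_n),x_{n+1}-x_n\>=\<\n g(z_n)-\n g(x_n),x_{n+1}-x_n\>+\<\n g(x_n),x_{n+1}-x_n\>$, applies \eqref{desc} once at $x_n$ with increment $x_{n+1}-x_n$ (penalty $\frac{L_g}{2}\|x_{n+1}-x_n\|^2$, independent of $\b_n$), and controls the mismatch term via Lipschitz continuity, $\|\n g(z_n)-\n g(x_n)\|\le L_g|\b_n|\,\|x_n-x_{n-1}\|$, plus Young's inequality, which costs only $\frac{L_g|\b_n|}{2}\left(\|x_{n+1}-x_n\|^2+\|x_n-x_{n-1}\|^2\right)$ --- linear in $|\b_n|$. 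With the prescribed $\d_n$ the resulting quadratic form has coefficients $B_{n+1}$, $-\frac{\a_n}{s}$, $B_n$, where $B_n=\frac{1}{4s}-\frac{L_g}{4}(1+|\b_{n-1}|+|\b_n|)$, and completing the square shows the leftover coefficient tends to a positive limit exactly under $s<\frac{1-2|\a|}{L_g(2|\b|+1)}$. If you replace your $g$-estimate by this splitting, the rest of your proposal (including your remarks on c-PADISNO and the passage from (i) to (ii)--(iii)) goes through as written.
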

\begin{proof}
 From PADISNO we have
 $$f(x_{n+1})+\<\n g(z_n), x_{n+1}-y_n\>+\frac{1}{2s}\|x_{n+1}-y_n\|^2\le f(x_{n})+\<\n g(z_n), x_{n}-y_n\>+\frac{1}{2s}\|x_{n}-y_n\|^2,\mbox{ for all }n\in\N.$$
 In other words
 \begin{equation}\label{nc0}
 f(x_{n})-f(x_{n+1})\ge \<\n g(z_n), x_{n+1}-x_n\>+\frac{1}{2s}\|x_{n+1}-x_n\|^2-\frac{\a_n}{s}\<x_{n+1}-x_n,x_n-x_{n-1}\>,\mbox{ for all }n\in\N.
 \end{equation}

Further,
\begin{equation}\label{ng0}
\<\n g(z_n), x_{n+1}-x_n\>=\<\n g(z_n)-\n g(x_n),x_{n+1}-x_n\>+\<\n g(x_n),x_{n+1}-x_n\>,\mbox{ for all }n\in\N.
\end{equation}

By using the Lipschitz continuity of $\n g$  we get
\begin{align}\nonumber
\<\n g(z_n)-\n g(x_n),x_{n+1}-x_n\>&=\left\|\frac{1}{\sqrt{2L_g|\b_n|}}(\n g(z_n)-\n g(x_n))+\frac{\sqrt{2L_g|\b_n|}}{2}(x_{n+1}-x_n)\right\|^2\\
\nonumber&-\frac{1}{2L_g|\b_n|}\|\n g(z_n)-\n g(x_n)\|^2-\frac{L_g|\b_n|}{2}\|x_{n+1}-x_n\|^2\\
\nonumber&\ge -\frac{L_g|\b_n|}{2}(\|x_n-x_{n-1}\|^2+\|x_{n+1}-x_n\|^2),
\end{align}
for all $n\in\N$ for which for $\b_n\neq 0$. Further, if $\b_n=0$ then the above inequality trivially holds.

By using \eqref{desc} we get
$$\<\n g(x_n),x_{n+1}-x_n\>\ge g(x_{n+1})-g(x_n)-\frac{L_g}{2}\|x_{n+1}-x_n\|^2,\mbox{ for all }n\in\N.$$

Hence, \eqref{ng0} leads to
\begin{equation}\label{ng1}
\<\n g(z_n), x_{n+1}-x_n\>\ge g(x_{n+1})-g(x_n)-\frac{L_g}{2}(1+|\b_n|)\|x_{n+1}-x_n\|^2-\frac{L_g|\b_n|}{2}\|x_n-x_{n-1}\|^2,\mbox{ for all }n\in\N.
\end{equation}

Hence, \eqref{nc0} leads to
\begin{align}\label{fg1}
(f+g)(x_n)-(f+g)(x_{n+1})&\ge \left(\frac{1}{2s}-\frac{L_g}{2}(1+|\b_n|)\right)\|x_{n+1}-x_n\|^2-\frac{\a_n}{s}\<x_{n+1}-x_n,x_n-x_{n-1}\>\\
\nonumber&-\frac{L_g|\b_n|}{2}\|x_n-x_{n-1}\|^2,\mbox{ for all }n\in\N.
\end{align}

Now, taking into account the form of $\d_n$, \eqref{fg1} leads to
\begin{align}\label{fg2}
&\left((f+g)(x_n)+\d_n\|x_n-x_{n-1}\|^2\right)-\left((f+g)(x_{n+1})+\d_{n+1}\|x_{n+1}-x_n\|^2\right)\ge \\ \nonumber &\left(\frac{1}{2s}-\frac{L_g}{2}(1+|\b_n|)-\d_{n+1}\right)\|x_{n+1}-x_n\|^2-\frac{\a_n}{s}\<x_{n+1}-x_n,x_n-x_{n-1}\>+\left(\d_n-\frac{L_g|\b_n|}{2}\right)\|x_n-x_{n-1}\|^2\\
\nonumber&=\left(\frac{1}{4s}-\frac{L_g}{4}(1+|\b_n|+|\b_{n+1}|)\right)\|x_{n+1}-x_n\|^2\\
\nonumber&-\frac{\a_n}{s}\<x_{n+1}-x_n,x_n-x_{n-1}\>\\
\nonumber&+\left(\frac{1}{4s}-\frac{L_g}{4}(1+|\b_{n-1}|+|\b_{n}|)\right)\|x_{n}-x_{n-1}\|^2
\end{align}
for all $n\in\N.$

For simplicity, let us denote $B_n=\frac{1}{4s}-\frac{L_g}{4}(1+|\b_{n-1}|+|\b_{n}|),\,n\in\N.$ Note that by assumption we have $0<s<\frac{1-2|\a|}{L_g(2|\b|+1)}$ and $\a\in\left(-\frac{1}{2},\frac{1}{2}\right),$ hence
$$\lim_{n\To+\infty}B_n=\frac{1}{4s}-\frac{L_g}{4}(1+2|\b|)>0.$$
Consequently, there exists $N_0\in\N$ such that $B_n>0$ for all $n\ge N_0.$

Thus, we have
\begin{align}\label{forB}
-\frac{\a_n}{s}\<x_{n+1}-x_n,x_n-x_{n-1}\>&=\left\|-\frac{\a_n}{s}\frac{1}{2\sqrt{B_n}}(x_{n+1}-x_n)+\sqrt{B_n}(x_n-x_{n-1})\right\|^2\\
\nonumber&-\frac{\a_n^2}{4s^2 B_n}\|x_{n+1}-x_n\|^2-B_n\|x_n-x_{n-1}\|^2,
\end{align}
for all $n\ge N_0.$

Consequently, \eqref{fg2} leads to
\begin{align}\label{fg3}
&\left((f+g)(x_n)+\d_n\|x_n-x_{n-1}\|^2\right)-\left((f+g)(x_{n+1})+\d_{n+1}\|x_{n+1}-x_n\|^2\right)\ge \\
\nonumber &\left(B_{n+1}-\frac{\a_n^2}{4s^2 B_n}\right)\|x_{n+1}-x_n\|^2,\mbox{ for all }n\ge N_0.
\end{align}

Further, we have that
$$\lim_{n\To+\infty}\left(B_{n+1}-\frac{\a_n^2}{4s^2 B_n}\right)=\frac{\left(\frac{1-2|\a|}{4s}-\frac{L_g}{4}(1+2|\b|)\right)\left(\frac{1+2|\a|}{4s}-\frac{L_g}{4}(1+2|\b|)\right)}{\frac{1}{4s}-\frac{L_g}{4}(1+2|\b|)}>0,$$
since $0<s<\frac{1-2|\a|}{L_g(2|\b|+1)}$ and $\a\in\left(-\frac{1}{2},\frac{1}{2}\right),$ hence there exists $N_1\ge N_0$ and $A>0$ such that
$$B_{n+1}-\frac{\a_n^2}{4s^2 B_n}\ge A,\mbox{ for all }n\ge N_1.$$

Finally, $\lim_{n\To+\infty}\d_n=\frac{1}{4s}-\frac{L_g}{4}>0$, hence there exists $N\ge N_1$ such that $\d_n>0$ for all $n\ge N.$

In other words, for all $n\ge N$ one has
\begin{align}\label{fg4}
&\left((f+g)(x_n)+\d_n\|x_n-x_{n-1}\|^2\right)-\left((f+g)(x_{n+1})+\d_{n+1}\|x_{n+1}-x_n\|^2\right)\ge A\|x_{n+1}-x_n\|^2
\end{align}
and $\d_n>0$ and this proves (i).

Let $r>N.$ By summing up the \eqref{fg4} from $n=N$ to $n=r$ we get
$$
A\sum_{n=N}^r\|x_{n+1}-x_{n}\|^2\le ((f+g)(x_N)+\d_N\|x_N-x_{N-1}\|^2)-((f+g)(x_{r+1})+\d_{r+1}\|x_{r+1}-x_r\|^2)
$$
which leads to
\begin{equation}\label{forcoercive}
(f+g)(x_{r+1})+A\sum_{n=N}^r\|x_{n+1}-x_{n}\|^2\le (f+g)(x_N)+\d_N\|x_N-x_{N-1}\|^2.
\end{equation}
Now,  if we assume that  $f+g$ is bounded from below, by letting $r\To+\infty$  we obtain
$$\sum_{n=N}^{\infty}\|x_{n+1}-x_{n}\|^2<+\infty$$ which proves (iii).

The latter relation also shows that
$$\lim_{n\To+\infty}\|x_n-x_{n-1}\|^2=0,$$
hence
\begin{equation}\label{forcont}
\lim_{n\To+\infty}\d_n\|x_n-x_{n-1}\|^2=0.
\end{equation}
But then, by using the  assumption that the function $f+g$ is bounded from below we obtain that the sequence $((f+g)(x_n)+\d_n\|x_n-x_{n-1}\|^2)_{n\in\N}$ is bounded from below. On the other hand, from (i) we have that  the sequence $((f+g)(x_n)+\d_n\|x_n-x_{n-1}\|^2)_{n\ge N}$ is nonincreasing, hence there exists
$$\lim_{n\To+\infty} (f+g)(x_n)+\d_n\|x_n-x_{n-1}\|^2\in\R.$$
Further, since $\lim_{n\To+\infty}\d_n\|x_n-x_{n-1}\|^2=0$ we get that
there exists
$$\lim_{n\To+\infty} (f+g)(x_n)\in\R.$$
\end{proof}

If we assume that the function $g$ is concave the previous result can be improved.

\begin{lemma}\label{descentncgconcave}  Assume that the function $g$, in the formulation of the optimization problem \eqref{propt}, is concave. For some starting points $x_{-1}=x_0\in\R^m,$ let $(x_n)_{n\in\N}$ be the sequence generated by the numerical scheme PADISNO. Assume further, that the step size $s$ in PADISNO satisfies $0<s<\frac{1-2|\a|}{2Lg|\b|},$ (when $\b=0$ we assume only that $s\in(0,+\infty)$). Consider the sequence
$$\d_n=\frac{1}{4s}+\frac{L_g}{4}(|\b_n|-|\b_{n-1}|),\,n\ge 1.$$
Then, there exist $N\in\N$ and $A>0$ such that
\begin{itemize}
\item[(i)] $\left((f+g)(x_{n})+\d_{n}\|x_{n}-x_{n-1}\|^2\right)-\left((f+g)(x_{n+1})+\d_{n+1}\|x_{n+1}-x_{n}\|^2\right)\ge A\|x_{n+1}-x_n\|^2$ and $\d_n>0$  for all $n\ge N$.
\end{itemize}
Assume that $f+g$ is bounded from below. Then, the following statements hold.
\begin{itemize}
\item[(ii)] The sequences $\left((f+g)(x_{n})+\d_{n}\|x_{n}-x_{n-1}\|^2\right)_{n\in \N}$ and $\left((f+g)(x_{n})\right)_{n\in \N}$ are convergent;
\item[(iii)] $\sum_{n\ge 1}\|x_n-x_{n-1}\|^2<+\infty.$
\end{itemize}
\end{lemma}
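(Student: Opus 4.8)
The plan is to mirror the proof of Lemma \ref{descentnc} essentially line by line, the single structural change being that the concavity of $g$ lets us dispense with the descent lemma \eqref{desc} at one crucial place and thereby save a term $\frac{L_g}{2}\|x_{n+1}-x_n\|^2$. Concretely, I would begin exactly as before: the optimality of $x_{n+1}$ in PADISNO yields \eqref{nc0}, and the splitting \eqref{ng0} together with the Lipschitz estimate on $\n g(z_n)-\n g(x_n)$ (which uses $\|z_n-x_n\|=|\b_n|\,\|x_n-x_{n-1}\|$) gives, verbatim,
$$\<\n g(z_n)-\n g(x_n),x_{n+1}-x_n\>\ge-\frac{L_g|\b_n|}{2}\left(\|x_n-x_{n-1}\|^2+\|x_{n+1}-x_n\|^2\right).$$
The one and only change is in bounding $\<\n g(x_n),x_{n+1}-x_n\>$: instead of the descent lemma I would invoke the gradient inequality for the \emph{concave} function $g$, namely $g(x_{n+1})\le g(x_n)+\<\n g(x_n),x_{n+1}-x_n\>$, which gives $\<\n g(x_n),x_{n+1}-x_n\>\ge g(x_{n+1})-g(x_n)$ with no quadratic remainder. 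Combining, the analog of \eqref{ng1} loses its $\frac{L_g}{2}\|x_{n+1}-x_n\|^2$ summand, so \eqref{fg1} becomes
$$(f+g)(x_n)-(f+g)(x_{n+1})\ge\left(\frac{1}{2s}-\frac{L_g|\b_n|}{2}\right)\|x_{n+1}-x_n\|^2-\frac{\a_n}{s}\<x_{n+1}-x_n,x_n-x_{n-1}\>-\frac{L_g|\b_n|}{2}\|x_n-x_{n-1}\|^2.$$

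Next I would substitute the new $\d_n=\frac{1}{4s}+\frac{L_g}{4}(|\b_n|-|\b_{n-1}|)$ into the telescoped quantity, exactly as in \eqref{fg2}. A short computation shows the coefficient of $\|x_{n+1}-x_n\|^2$ is $\frac{1}{4s}-\frac{L_g}{4}(|\b_n|+|\b_{n+1}|)$ and that of $\|x_n-x_{n-1}\|^2$ is $\frac{1}{4s}-\frac{L_g}{4}(|\b_{n-1}|+|\b_n|)$; the absence of the ``$1$'' inside the parentheses is precisely what concavity bought us, and it is also what dictates the form of the new $\d_n$ and the enlarged step size. Setting $B_n=\frac{1}{4s}-\frac{L_g}{4}(|\b_{n-1}|+|\b_n|)$, the telescoped inequality reads
$$\left((f+g)(x_n)+\d_n\|x_n-x_{n-1}\|^2\right)-\left((f+g)(x_{n+1})+\d_{n+1}\|x_{n+1}-x_n\|^2\right)\ge B_{n+1}\|x_{n+1}-x_n\|^2-\frac{\a_n}{s}\<x_{n+1}-x_n,x_n-x_{n-1}\>+B_n\|x_n-x_{n-1}\|^2,$$
which is structurally identical to the general case, so the square completion \eqref{forB} applies once $B_n>0$ and produces the coefficient $B_{n+1}-\frac{\a_n^2}{4s^2B_n}$ in front of $\|x_{n+1}-x_n\|^2$.

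Then I would check the two positivity limits, which is exactly where the step-size hypothesis $s<\frac{1-2|\a|}{2L_g|\b|}$ enters. Since $|\b_n|-|\b_{n-1}|\To 0$ we have $B_n\To\frac{1}{4s}-\frac{L_g|\b|}{2}$, and because $|\a|<\frac12$ the bound $s<\frac{1-2|\a|}{2L_g|\b|}\le\frac{1}{2L_g|\b|}$ forces $B_n>0$ for large $n$. Factoring as in the general proof,
$$\lim_{n\To+\infty}\left(B_{n+1}-\frac{\a_n^2}{4s^2B_n}\right)=\frac{\left(\frac{1-2|\a|}{4s}-\frac{L_g|\b|}{2}\right)\left(\frac{1+2|\a|}{4s}-\frac{L_g|\b|}{2}\right)}{\frac{1}{4s}-\frac{L_g|\b|}{2}},$$
and the first factor is positive exactly under $s<\frac{1-2|\a|}{2L_g|\b|}$ (the second factor and the denominator are then automatically larger, hence positive), yielding the promised $A>0$ and threshold. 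Finally $\d_n\To\frac{1}{4s}>0$ gives $\d_n>0$ eventually, which completes (i). I expect no genuine obstacle beyond careful bookkeeping: parts (ii) and (iii) follow verbatim from Lemma \ref{descentnc} — summing the descent inequality (i), using that $f+g$ is bounded below to obtain $\sum\|x_n-x_{n-1}\|^2<+\infty$ and hence $\d_n\|x_n-x_{n-1}\|^2\To0$, and concluding the convergence of $\left((f+g)(x_n)+\d_n\|x_n-x_{n-1}\|^2\right)_n$ and of $\left((f+g)(x_n)\right)_n$ by monotonicity together with boundedness from below. The only point requiring attention is ensuring the ``$1$'' disappears consistently from both $B_n$ and $\d_n$; there is no real difficulty past that.
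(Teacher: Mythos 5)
Your proposal is correct and follows essentially the same route as the paper's own proof: the paper likewise argues exactly as in Lemma \ref{descentnc}, replacing the descent lemma by the concavity (gradient) inequality $\<\n g(x_n),x_{n+1}-x_n\>\ge g(x_{n+1})-g(x_n)$, arriving at the same coefficients $B_n=\frac{1}{4s}-\frac{L_g}{4}(|\b_{n-1}|+|\b_n|)$, the same square completion via \eqref{forB}, and the same limit $\frac{\left(\frac{1-2|\a|}{4s}-\frac{L_g|\b|}{2}\right)\left(\frac{1+2|\a|}{4s}-\frac{L_g|\b|}{2}\right)}{\frac{1}{4s}-\frac{L_g|\b|}{2}}>0$. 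Your treatment of parts (ii) and (iii) by repeating the summation and boundedness argument of Lemma \ref{descentnc} also matches the paper exactly.
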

\begin{proof} We argue as in Lemma \ref{descentnc} but instead of descent lemma we use the fact that $g$  is concave, hence the gradient inequality yields
$$\<\n g(x_n),x_{n+1}-x_n\>\ge g(x_{n+1})-g(x_n),\mbox{ for all }n\in\N.$$

Consequently,  in this case \eqref{nc0} leads to
\begin{align}\label{fg1gconcave}
(f+g)(x_n)-(f+g)(x_{n+1})&\ge \left(\frac{1}{2s}-\frac{L_g|\b_n|}{2})\right)\|x_{n+1}-x_n\|^2-\frac{\a_n}{s}\<x_{n+1}-x_n,x_n-x_{n-1}\>\\
\nonumber&-\frac{L_g|\b_n|}{2}\|x_n-x_{n-1}\|^2,\mbox{ for all }n\in\N.
\end{align}

Now, taking into account the form of $\d_n$, \eqref{fg1gconcave} leads to
\begin{align}\label{fg2gconcave}
&\left((f+g)(x_n)+\d_n\|x_n-x_{n-1}\|^2\right)-\left((f+g)(x_{n+1})+\d_{n+1}\|x_{n+1}-x_n\|^2\right)\ge \\ \nonumber &\left(\frac{1}{2s}-\frac{L_g|\b_n|}{2}-\d_{n+1}\right)\|x_{n+1}-x_n\|^2-\frac{\a_n}{s}\<x_{n+1}-x_n,x_n-x_{n-1}\>+\left(\d_n-\frac{L_g|\b_n|}{2}\right)\|x_n-x_{n-1}\|^2\\
\nonumber&=\left(\frac{1}{4s}-\frac{L_g}{4}(|\b_n|+|\b_{n+1}|)\right)\|x_{n+1}-x_n\|^2\\
\nonumber&-\frac{\a_n}{s}\<x_{n+1}-x_n,x_n-x_{n-1}\>\\
\nonumber&+\left(\frac{1}{4s}-\frac{L_g}{4}(|\b_{n-1}|+|\b_{n}|)\right)\|x_{n}-x_{n-1}\|^2
\end{align}
for all $n\in\N.$

For simplicity, let us denote $B_n=\frac{1}{4s}-\frac{L_g}{4}(|\b_{n-1}|+|\b_{n}|),\,n\in\N.$

Note that by assumption we have $0<s<\frac{1-2|\a|}{2L_g|\b|}\le \frac{1}{2L_g|\b|}$, hence
$$\lim_{n\To+\infty}B_n=\frac{1}{4s}-\frac{L_g|\b|}{2}>0.$$
Consequently, there exists $N_0\in\N$ such that $B_n>0$ for all $n\ge N_0.$

Thus, using  \eqref{forB} combined with \eqref{fg2gconcave} we get that for all $n\ge N_0$ it holds
\begin{align}\label{fg3gconcave}
&\left((f+g)(x_n)+\d_n\|x_n-x_{n-1}\|^2\right)-\left((f+g)(x_{n+1})+\d_{n+1}\|x_{n+1}-x_n\|^2\right)\ge
\left(B_{n+1}-\frac{\a_n^2}{4s^2 B_n}\right)\|x_{n+1}-x_n\|^2.
\end{align}

Further, we have that
$$\lim_{n\To+\infty}\left(B_{n+1}-\frac{\a_n^2}{4s^2 B_n}\right)=\frac{\left(\frac{1-2|\a|}{4s}-\frac{L_g|\b|}{2}\right)\left(\frac{1+2|\a|}{4s}-\frac{L_g|\b|}{2}\right)}{\frac{1}{4s}-\frac{L_g|\b|}{2}}>0,$$
since $0<s<\frac{1-2|\a|}{2L_g|\b|}$ and $\a\in\left(-\frac{1}{2},\frac{1}{2}\right),$ hence there exists $N_1\ge N_0$ and $A>0$ such that
$$B_{n+1}-\frac{\a_n^2}{4s^2 B_n}\ge A,\mbox{ for all }n\ge N_1.$$

Just as in the proof of Lemma \ref{descentnc} we conclude that there exists $N\ge N_1$ such that for all $n\ge N$ one has
\begin{align}\label{fg4gconcave}
&\left((f+g)(x_n)+\d_n\|x_n-x_{n-1}\|^2\right)-\left((f+g)(x_{n+1})+\d_{n+1}\|x_{n+1}-x_n\|^2\right)\ge A\|x_{n+1}-x_n\|^2
\end{align}
and $\d_n>0$ and this proves (i).

The claims (ii) and (iii)  can be proven similarly to the proof of Lemma \ref{descentnc}.
\end{proof}

Concerning c-PADISNO some similar descent property as that stated in Lemma \ref{descentnc} can be obtained. 

\begin{lemma}\label{descent}  In the settings of problem \eqref{propt}, for some starting points $x_{-1}=x_0\in\R^m,$ let $(x_n)_{n\in\N}$ be the sequence generated by the numerical scheme c-PADISNO. Consider the sequence
$$\d_n=\frac{1}{2s}+\frac{L_g}{4}(|\b_n|-|\b_{n-1}|-1),\,n\ge 1.$$
Then, there exists $N\in\N$ and $A>0$ such that
\begin{itemize}
\item[(i)] $\left((f+g)(x_{n})+\d_{n}\|x_{n}-x_{n-1}\|^2\right)-\left((f+g)(x_{n+1})+\d_{n+1}\|x_{n+1}-x_{n}\|^2\right)\ge A\|x_{n+1}-x_n\|^2$ and $\d_n>0$  for all $n\ge N$.
\end{itemize}
Assume that $f+g$ is bounded from below. Then, the following statements hold.
\begin{itemize}
\item[(ii)] The sequences $\left((f+g)(x_{n})+\d_{n}\|x_{n}-x_{n-1}\|^2\right)_{n\in \N}$ and $\left((f+g)(x_{n})\right)_{n\in \N}$ are convergent;
\item[(iii)] $\sum_{n\ge 1}\|x_n-x_{n-1}\|^2<+\infty.$
\end{itemize}
\end{lemma}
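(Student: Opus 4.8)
The plan is to mirror the proof of Lemma \ref{descentnc}, exploiting the single structural advantage available here: since $f$ is convex, the objective $\psi(y)=f(y)+\<\n g(z_n),y-y_n\>+\frac{1}{2s}\|y-y_n\|^2$ minimized at $x_{n+1}$ is $\frac1s$-strongly convex. Hence, instead of merely comparing the minimal value to the value at $x_n$ (as in PADISNO), I would invoke the strong-convexity estimate $\psi(x_n)\ge\psi(x_{n+1})+\frac{1}{2s}\|x_{n+1}-x_n\|^2$. Carrying out the same expansion of $\|x_{n+1}-y_n\|^2-\|x_n-y_n\|^2$ with $y_n=x_n+\a_n(x_n-x_{n-1})$ that produced \eqref{nc0}, this extra $\frac{1}{2s}\|x_{n+1}-x_n\|^2$ upgrades the coefficient of $\|x_{n+1}-x_n\|^2$ from $\frac{1}{2s}$ to $\frac1s$; that is, the c-PADISNO analogue of \eqref{nc0} reads $f(x_n)-f(x_{n+1})\ge\<\n g(z_n),x_{n+1}-x_n\>+\frac1s\|x_{n+1}-x_n\|^2-\frac{\a_n}{s}\<x_{n+1}-x_n,x_n-x_{n-1}\>$.

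From here I would repeat the steps of Lemma \ref{descentnc}: applying the descent lemma \eqref{desc} and the Lipschitz bound on $\<\n g(z_n)-\n g(x_n),x_{n+1}-x_n\>$ yields the counterpart of \eqref{fg1} with leading coefficient $\frac1s-\frac{L_g}{2}(1+|\b_n|)$. Substituting the new $\d_n=\frac{1}{2s}+\frac{L_g}{4}(|\b_n|-|\b_{n-1}|-1)$, a direct computation shows that $\frac1s-\frac{L_g}{2}(1+|\b_n|)-\d_{n+1}$ simplifies to $\frac{1}{2s}-\frac{L_g}{4}(1+|\b_n|+|\b_{n+1}|)$, while $\d_n-\frac{L_g|\b_n|}{2}=\frac{1}{2s}-\frac{L_g}{4}(1+|\b_{n-1}|+|\b_n|)$. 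Thus, setting $B_n=\frac{1}{2s}-\frac{L_g}{4}(1+|\b_{n-1}|+|\b_n|)$ (the $\frac{1}{4s}$ of Lemma \ref{descentnc} is replaced by $\frac{1}{2s}$), the estimate \eqref{fg2} takes exactly the same form, and the completing-the-square identity \eqref{forB} applies unchanged to give the analogue of \eqref{fg3}: the right-hand side is $\left(B_{n+1}-\frac{\a_n^2}{4s^2B_n}\right)\|x_{n+1}-x_n\|^2$.

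The only genuinely new bookkeeping is checking the three positivity limits under the improved step-size regime $0<s<\frac{2(1-|\a|)}{L_g(2|\b|+1)}$ with $\a\in(-1,1)$. First, $\lim_n B_n=\frac{1}{2s}-\frac{L_g}{4}(1+2|\b|)>0$, because $s<\frac{2}{L_g(2|\b|+1)}$, which is implied by the step-size bound since $1-|\a|\le 1$. Second, and crucially, $\lim_n\left(B_{n+1}-\frac{\a_n^2}{4s^2B_n}\right)$ factors as $\frac{\left(\frac{1-|\a|}{2s}-\frac{L_g}{4}(1+2|\b|)\right)\left(\frac{1+|\a|}{2s}-\frac{L_g}{4}(1+2|\b|)\right)}{\frac{1}{2s}-\frac{L_g}{4}(1+2|\b|)}$, and the first numerator factor is positive precisely when $s<\frac{2(1-|\a|)}{L_g(2|\b|+1)}$ — exactly the step-size hypothesis of c-PADISNO, which explains why the larger step size is admissible here. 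Finally $\lim_n\d_n=\frac{1}{2s}-\frac{L_g}{4}>0$ since $s<\frac{2}{L_g}$. These three limits produce $N$ and $A>0$ with the asserted descent inequality and $\d_n>0$ for $n\ge N$, proving (i).

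Statements (ii) and (iii) then follow by the identical summation argument as in Lemma \ref{descentnc}: summing the telescoping inequality from $n=N$ to $n=r$ and using that $f+g$ is bounded from below gives $\sum\|x_n-x_{n-1}\|^2<+\infty$, whence $\|x_n-x_{n-1}\|\To0$ and $\d_n\|x_n-x_{n-1}\|^2\To0$; monotonicity together with lower-boundedness of $\left((f+g)(x_n)+\d_n\|x_n-x_{n-1}\|^2\right)_{n}$ then yields convergence of that sequence and, consequently, of $\left((f+g)(x_n)\right)_n$. I expect the main obstacle to be entirely localized at the first step — correctly extracting the strong-convexity gain and verifying that its net effect is exactly to relax the admissible step size by the factor encoding $\a\in(-1,1)$ rather than $\a\in\left(-\frac12,\frac12\right)$; once \eqref{nc0} is upgraded, the remainder is a mechanical transcription of the earlier proof.
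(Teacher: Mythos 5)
Your proposal is correct and follows essentially the same route as the paper: your strong-convexity estimate $\psi(x_n)\ge\psi(x_{n+1})+\frac{1}{2s}\|x_{n+1}-x_n\|^2$ is just a repackaging of the paper's first step (the prox optimality condition $\frac{1}{s}(y_n-x_{n+1})-\n g(z_n)\in\p f(x_{n+1})$ combined with the subgradient inequality for the convex $f$), and both yield exactly the inequality \eqref{fg11} with leading coefficient $\frac{1}{s}$. From there, your bookkeeping with $\d_n$, $B_n$ and the three positivity limits coincides with the paper's argument, which indeed concludes by noting that the rest goes analogously to Lemma \ref{descentnc}.
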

\begin{proof}
 From c-PADISNO we have
 $$\frac{1}{s}(y_n-x_{n+1})-\n g(z_n)\in \p f(x_{n+1}),\mbox{ for all }n\in\N.$$
From sub-gradient inequality applied to $f$ we get
\begin{equation}\label{fg01}
f(x_n)-f(x_{n+1})\ge \left\<\frac{1}{s}(y_n-x_{n+1})-\n g(z_n),x_n-x_{n+1}\right\>,\mbox{ for all }n\in\N.
\end{equation}

According to \eqref{ng1} one has
\begin{equation}\label{ng11}
\<-\n g(z_n),x_n-x_{n+1}\>\ge g(x_{n+1})-g(x_n)-\frac{L_g}{2}(1+|\b_n|)\|x_{n+1}-x_n\|^2-\frac{L_g|\b_n|}{2}\|x_n-x_{n-1}\|^2,\mbox{ for all }n\in\N.
\end{equation}

It is straightforward that
$$\left\<\frac{1}{s}(y_n-x_{n+1}),x_n-x_{n+1}\right\>=\frac{1}{s}\|x_{n+1}-x_n\|^2-\frac{\a_n}{s}\<x_{n+1}-x_n,x_n-x_{n-1}\>,\mbox{ for all }n\in\N,$$
hence, \eqref{fg01} leads to
\begin{align}\label{fg11}
(f+g)(x_n)-(f+g)(x_{n+1})&\ge \left(\frac{1}{s}-\frac{L_g}{2}(1+|\b_n|)\right)\|x_{n+1}-x_n\|^2-\frac{\a_n}{s}\<x_{n+1}-x_n,x_n-x_{n-1}\>\\
\nonumber&-\frac{L_g|\b_n|}{2}\|x_n-x_{n-1}\|^2,\mbox{ for all }n\in\N.
\end{align}

The rest of the proof goes analogously to the proof of Lemma \ref{descentnc} and therefore we omit it.
\end{proof}

In case $g$ is concave the objective function in the optimization problem \eqref{propt} becomes the difference of two convex functions. Also in this case one may take very large step size, provided $\b_n\To0,\,n\To+\infty.$ We have the following result.

\begin{lemma}\label{descentgconcave}  Assume that the function $g$, in the formulation of the optimization problem \eqref{propt}, is concave. For some starting points $x_{-1}=x_0\in\R^m,$ let $(x_n)_{n\in\N}$ be the sequence generated by the numerical scheme c-PADISNO. Assume further, that the step size $s$ in c-PADISNO satisfies $0<s<\frac{1-|\a|}{Lg|\b|},$ (when $\b=0$ we assume only that $s\in(0,+\infty)$). Consider the sequence
$$\d_n=\frac{1}{2s}+\frac{L_g}{4}(|\b_n|-|\b_{n-1}|),\,n\ge 1.$$
Then, there exist $N\in\N$ and $A>0$ such that
\begin{itemize}
\item[(i)] $\left((f+g)(x_{n})+\d_{n}\|x_{n}-x_{n-1}\|^2\right)-\left((f+g)(x_{n+1})+\d_{n+1}\|x_{n+1}-x_{n}\|^2\right)\ge A\|x_{n+1}-x_n\|^2$ and $\d_n>0$  for all $n\ge N$.
\end{itemize}
Assume that $f+g$ is bounded from below. Then, the following statements hold.
\begin{itemize}
\item[(ii)] The sequences $\left((f+g)(x_{n})+\d_{n}\|x_{n}-x_{n-1}\|^2\right)_{n\in \N}$ and $\left((f+g)(x_{n})\right)_{n\in \N}$ are convergent;
\item[(iii)] $\sum_{n\ge 1}\|x_n-x_{n-1}\|^2<+\infty.$
\end{itemize}
\end{lemma}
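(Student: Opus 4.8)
The plan is to follow the proof of Lemma~\ref{descent} almost verbatim, but at the single point where the descent lemma \eqref{desc} is invoked, replace the estimate for $g$ by the concavity argument already used in Lemma~\ref{descentncgconcave}. Concretely, c-PADISNO still yields $\frac1s(y_n-x_{n+1})-\n g(z_n)\in\p f(x_{n+1})$, so the subgradient inequality for the convex function $f$ produces \eqref{fg01}. For the gradient contribution, instead of splitting $\langle\n g(z_n),x_{n+1}-x_n\rangle$ via the descent lemma, I would combine the concavity gradient inequality $\langle\n g(x_n),x_{n+1}-x_n\rangle\ge g(x_{n+1})-g(x_n)$ with the Lipschitz estimate $\langle\n g(z_n)-\n g(x_n),x_{n+1}-x_n\rangle\ge-\frac{L_g|\b_n|}{2}(\|x_{n+1}-x_n\|^2+\|x_n-x_{n-1}\|^2)$ derived in Lemma~\ref{descentnc}. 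This gives the sharper bound $\langle-\n g(z_n),x_n-x_{n+1}\rangle\ge g(x_{n+1})-g(x_n)-\frac{L_g|\b_n|}{2}\|x_{n+1}-x_n\|^2-\frac{L_g|\b_n|}{2}\|x_n-x_{n-1}\|^2$, which lacks the extra $\frac{L_g}{2}\|x_{n+1}-x_n\|^2$ coming from the descent lemma in the non-concave case.

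Feeding this into \eqref{fg01} together with the elementary identity $\langle\frac1s(y_n-x_{n+1}),x_n-x_{n+1}\rangle=\frac1s\|x_{n+1}-x_n\|^2-\frac{\a_n}{s}\langle x_{n+1}-x_n,x_n-x_{n-1}\rangle$ displayed in the proof of Lemma~\ref{descent}, I obtain
$$(f+g)(x_n)-(f+g)(x_{n+1})\ge\Big(\frac1s-\frac{L_g|\b_n|}{2}\Big)\|x_{n+1}-x_n\|^2-\frac{\a_n}{s}\langle x_{n+1}-x_n,x_n-x_{n-1}\rangle-\frac{L_g|\b_n|}{2}\|x_n-x_{n-1}\|^2,$$
the exact analogue of \eqref{fg1gconcave} but with coefficient $\frac1s$ in place of $\frac1{2s}$ in front of $\|x_{n+1}-x_n\|^2$, reflecting the convexity of $f$. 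Inserting the present $\d_n=\frac1{2s}+\frac{L_g}{4}(|\b_n|-|\b_{n-1}|)$ into the telescoped expression and simplifying, the coefficients of $\|x_{n+1}-x_n\|^2$ and $\|x_n-x_{n-1}\|^2$ collapse precisely to $B_{n+1}$ and $B_n$, where now $B_n=\frac1{2s}-\frac{L_g}{4}(|\b_{n-1}|+|\b_n|)$. Applying the completing-the-square identity \eqref{forB} (valid once $B_n>0$, which holds for large $n$) then leaves $\big(B_{n+1}-\frac{\a_n^2}{4s^2B_n}\big)\|x_{n+1}-x_n\|^2$ on the right-hand side.

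The only genuinely new verification — and the step where the improved step size is earned — is that this limiting coefficient is strictly positive. Since $B_n\to\frac1{2s}-\frac{L_g|\b|}{2}$, a short computation gives $\lim_n\big(B_{n+1}-\frac{\a_n^2}{4s^2B_n}\big)=\frac{\left(\frac{1-|\a|}{2s}-\frac{L_g|\b|}{2}\right)\left(\frac{1+|\a|}{2s}-\frac{L_g|\b|}{2}\right)}{\frac1{2s}-\frac{L_g|\b|}{2}}$, and each factor is positive exactly because $|\a|<1$ and the hypothesis $s<\frac{1-|\a|}{L_g|\b|}$ forces $\frac{1-|\a|}{2s}>\frac{L_g|\b|}{2}$ (the case $\b=0$ being immediate, since then $B_n\to\frac1{2s}$ and $s$ is arbitrary). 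Hence there are $N_1$ and $A>0$ with $B_{n+1}-\frac{\a_n^2}{4s^2B_n}\ge A$ eventually, and since $\d_n\to\frac1{2s}>0$ we also get $\d_n>0$ for large $n$, proving (i). Statements (ii) and (iii) then follow word for word as in Lemma~\ref{descentnc}: summing the descent inequality from $N$ to $r$, using boundedness from below of $f+g$ to deduce $\sum\|x_{n+1}-x_n\|^2<+\infty$, hence $\d_n\|x_n-x_{n-1}\|^2\to0$, and concluding convergence of both $\big((f+g)(x_n)+\d_n\|x_n-x_{n-1}\|^2\big)$ and $\big((f+g)(x_n)\big)$ from monotonicity together with boundedness from below. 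I do not expect any real obstacle here beyond the bookkeeping of the two coefficient computations above.
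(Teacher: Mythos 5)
Your proposal is correct and follows essentially the same route as the paper: the paper's own proof is a one-line reference stating that one argues as in Lemma~\ref{descent} but replaces the descent lemma \eqref{desc} by the concavity (gradient) inequality, exactly as in Lemma~\ref{descentncgconcave}. Your expanded computation — the modified analogue of \eqref{fg1gconcave} with coefficient $\frac{1}{s}$, the collapse of the coefficients to $B_{n+1}$ and $B_n$ with $B_n=\frac{1}{2s}-\frac{L_g}{4}(|\b_{n-1}|+|\b_n|)$, the positivity of the limit $\frac{\left(\frac{1-|\a|}{2s}-\frac{L_g|\b|}{2}\right)\left(\frac{1+|\a|}{2s}-\frac{L_g|\b|}{2}\right)}{\frac{1}{2s}-\frac{L_g|\b|}{2}}$ under $0<s<\frac{1-|\a|}{L_g|\b|}$, and the verbatim transfer of (ii)--(iii) from Lemma~\ref{descentnc} — is precisely the argument the paper intends.
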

\begin{proof} We argue as in Lemma \ref{descent} but, as in the proof of Lemma \ref{descentncgconcave}, instead of descent lemma we use the the gradient inequality.
\end{proof}

\begin{remark} Note that in Lemma \ref{descentncgconcave} and Lemma \ref{descentgconcave} one may allow arbitrary large step size. Indeed, for an arbitrary $K>0$ and a fixed $(\a_n)$ satisfying $\a_n\To\a\in\left(-\frac12,\frac12\right),\,n\To+\infty$ in case of the algorithm PADISNO and $\a_n\To\a\in(-1,1),\,n\To+\infty$ in case of the algorithm c-PADISNO one may consider
$$\b_n=\frac{1-2|\a_n|}{2L_g K}\mbox{ and }\b_n=\frac{1-|\a_n|}{L_g K}, \mbox{ respectively.}$$
Then, our assumption becomes $0<s<K,$ which shows that the step size can be arbitrary large. 

We emphasize that, though in our forthcoming results we do not treat  separately the case when the function $g$ is concave,  in that case the conclusions of Theorem \ref{convergence} and Theorem \ref{convergencerates} hold whenever  we assume that in PADISNO the step size satisfies $0<s<\frac{1-2|\a|}{2L|\b|}$ and in c-PADISNO  the step size satisfies $0<s<\frac{1-|\a|}{L|\b|}$, respectively.

This is due to the fact that in the forthcoming proofs we will not use the conditions imposed on the step size $s$, but the descent properties obtained in Lemma \ref{descentnc}, Lemma \ref{descentncgconcave}, Lemma \ref{descent} and Lemma \ref{descentgconcave}.
\end{remark}

In what follows, in order to apply our abstract convergence result obtained in Theorem \ref{thabstrconv}, we introduce a function and a sequence that will play the role of the function $F$ and the sequence $(u_n)$ studied in the previous section. We will treat PADISNO and c-PADISNO simultaneously, the entities $\d_n$ and $N$ correspond to the appropriate values obtained in Lemma \ref{descentnc} and Lemma \ref{descent}, respectively.

 Consider  the sequence
$$w_n=\sqrt{2\d_n}(x_{n}-x_{n-1})+x_n\mbox{, for all }n\in\N,\,n\ge N$$ and the sequence $u_n=(x_{n+N},w_{n+N})$ for all $n\in\N,$
where $N$ and $\d_n$ were defined in Lemma \ref{descentnc} if $x_n$ is the sequence generated by PADISNO and  $N$ and $\d_n$ were defined in Lemma \ref{descent} if $x_n$ is the sequence generated by c-PADISNO. Let us introduce the following notations:
 $$\tilde{x}_n=x_{n+N},\,\tilde{y}_n=y_{n+N}\mbox{ and }\tilde{z}_n=z_{n+N},$$
$$\tilde{\a}_n=\a_{n+N},\,\tilde{\b}_n=\b_{n+N}\mbox{ and }\tilde{\d}_n=\sqrt{2\d_{n+N}},$$
  for all $n\in\N.$
  Then obviously the sequences $(\tilde{\a}_n)_{n\in\N},(\tilde{\b}_n)_{n\in\N}$ and $(\tilde{\d}_n)_{n\in\N}$ are bounded, (actually they are convergent), and for each $n\in\N$, the sequence $u_n$ has the form
  \begin{equation}\label{zform}
u_n=\left(\tilde{x}_{n},\tilde{x}_{n}+\tilde{\d}_n(\tilde{x}_{n}-\tilde{x}_{n-1})\right).
\end{equation}

Consider further the following regularization of $f+g$
$$H:\R^m\times\R^m\To\R\cup\{+\infty\},\,H(x,y)=(f+g)(x)+\frac12\|y-x\|^2.$$
We have the following result.
\begin{proposition}\label{condH}
The sequences $(\tilde{x}_n)_{n\in\N}$ and $(u_n)_{n\in\N}$ and the function $H$ satisfy the conditions (H1)-(H3).
\end{proposition}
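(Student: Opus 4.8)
The plan is to verify (H1), (H2), (H3) in turn, treating PADISNO and c-PADISNO simultaneously, since they differ only in the precise value of $\d_n$ and the step-size restriction, both already handled by the descent Lemmas \ref{descentnc} and \ref{descent}. Throughout, the ``$x_n$'' and ``$F$'' of the abstract conditions are here $\tilde x_n$ and $H$.

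First I would rewrite $H(u_n)$ in a usable form. Since the second component of $u_n$ in \eqref{zform} satisfies $w_{n+N}-\tilde x_n=\tilde\d_n(\tilde x_n-\tilde x_{n-1})$ and $\tilde\d_n^2=2\d_{n+N}$, plugging into the definition of $H$ gives $H(u_n)=(f+g)(\tilde x_n)+\frac12\tilde\d_n^2\|\tilde x_n-\tilde x_{n-1}\|^2=(f+g)(x_{n+N})+\d_{n+N}\|x_{n+N}-x_{n+N-1}\|^2$. This is exactly the Lyapunov quantity appearing in part (i) of Lemma \ref{descentnc} (resp. Lemma \ref{descent}). Hence $H(u_n)-H(u_{n+1})$ equals the left-hand side of that inequality with index $n+N\ge N$, so (H1) holds verbatim with constant $a=A$ taken from the relevant lemma.

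For (H3) I would use the explicit form \eqref{zform}. For $u=(x,x)$ I estimate $\|u_n-u\|\le\|\tilde x_n-x\|+\|\tilde x_n+\tilde\d_n(\tilde x_n-\tilde x_{n-1})-x\|$ and bound the second term by writing $\tilde x_n+\tilde\d_n(\tilde x_n-\tilde x_{n-1})-x=(1+\tilde\d_n)(\tilde x_n-x)-\tilde\d_n(\tilde x_{n-1}-x)$, which yields $\|u_n-u\|\le(2+\tilde\d_n)\|\tilde x_n-x\|+\tilde\d_n\|\tilde x_{n-1}-x\|$. Since $(\tilde\d_n)_{n\in\N}$ is bounded, with $c=\sup_n\tilde\d_n$ this is precisely (H3) with $c_1=2+c$ and $c_2=c$, exactly as predicted by Corollary \ref{corabstrconv} applied to the inertial coefficients $0$ (first component) and $\tilde\d_n$ (second component).

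The main work, and the expected obstacle, is (H2), which requires an explicit element of $\partial H(u_n)$. I would first compute the subdifferential: writing $H$ as the sum of the separable lower semicontinuous term $(x,y)\mapsto(f+g)(x)$, whose limiting subdifferential is $\partial(f+g)(x)\times\{0\}$, and the $C^1$ term $(x,y)\mapsto\frac12\|y-x\|^2$ with gradient $(x-y,y-x)$, the sum rule gives $\partial H(x,y)=\big(\partial(f+g)(x)+(x-y)\big)\times\{y-x\}$. Evaluating at $u_n$ and using $w_{n+N}-\tilde x_n=\tilde\d_n(\tilde x_n-\tilde x_{n-1})$, every element of $\partial H(u_n)$ has the form $\big(\xi-\tilde\d_n(\tilde x_n-\tilde x_{n-1}),\ \tilde\d_n(\tilde x_n-\tilde x_{n-1})\big)$ with $\xi\in\partial(f+g)(\tilde x_n)=\partial f(\tilde x_n)+\n g(\tilde x_n)$. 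To produce a concrete $\xi$ I would invoke the Fermat condition of the proximal step: for both schemes $\frac1s(\tilde y_{n-1}-\tilde x_n)-\n g(\tilde z_{n-1})\in\partial f(\tilde x_n)$, so that $\xi_n:=\n g(\tilde x_n)-\n g(\tilde z_{n-1})+\frac1s(\tilde y_{n-1}-\tilde x_n)\in\partial(f+g)(\tilde x_n)$. I then bound $\dist(0,\partial H(u_n))$ by the norm of the associated element of $\partial H(u_n)$. Using the $L_g$-Lipschitz continuity of $\n g$ together with $\tilde x_n-\tilde z_{n-1}=(\tilde x_n-\tilde x_{n-1})-\tilde\b_{n-1}(\tilde x_{n-1}-\tilde x_{n-2})$ and $\tilde x_n-\tilde y_{n-1}=(\tilde x_n-\tilde x_{n-1})-\tilde\a_{n-1}(\tilde x_{n-1}-\tilde x_{n-2})$, each of the two components is controlled by a bounded linear combination of $\|\tilde x_n-\tilde x_{n-1}\|$ and $\|\tilde x_{n-1}-\tilde x_{n-2}\|$; since $(\tilde\a_n),(\tilde\b_n),(\tilde\d_n)$ are bounded, collecting coefficients yields (H2) with a suitable constant $b$. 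The only delicate points are the correct use of the separable-sum subdifferential formula (which forces the $y$-component of any subgradient to equal $y-x$, eliminating any extra freedom there) and the bookkeeping of the bounded coefficients.
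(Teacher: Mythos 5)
Your proposal is correct and follows essentially the same route as the paper: you identify $H(u_n)$ with the Lyapunov quantity $(f+g)(x_{n+N})+\d_{n+N}\|x_{n+N}-x_{n+N-1}\|^2$ so that Lemma \ref{descentnc}/Lemma \ref{descent}(i) gives (H1), you read (H3) off the special inertial-type form of $u_n$ with bounded coefficient $\tilde\d_n$ (the paper delegates this to Corollary \ref{corabstrconv}, you verify it directly with the same constants $c_1=2+c$, $c_2=c$), and for (H2) you compute $\p H(x,y)=\{\p f(x)+\n g(x)+x-y\}\times\{y-x\}$ and exhibit the same witness subgradient $W_n$ built from the prox optimality condition $\frac1s(\tilde y_{n-1}-\tilde x_n)-\n g(\tilde z_{n-1})\in\p f(\tilde x_n)$, bounded via the Lipschitz continuity of $\n g$ and the boundedness of $(\tilde\a_n),(\tilde\b_n),(\tilde\d_n)$. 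No gaps; this is the paper's argument in all three steps.
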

\begin{proof} Indeed, note  first that the sequence $u_n$ has the form assumed at Corollary \ref{corabstrconv} and Theorem \ref{thabstrconv}, hence in particular (H3) holds.

Further, for every $n\in\N$ one has
$$H(u_n)=(f+g)(\tilde{x}_n)+\d_{n+N}\|\tilde{x}_n-\tilde{x}_{n-1}\|^2.$$

Now, (i) from Lemma \ref{descentnc} or Lemma \ref{descent} becomes
\begin{equation}\label{H1forH}
A\|\tilde{x}_{n+1}-\tilde{x}_{n}\|\le H(u_n)-H(u_{n+1}),\,\mbox{ for all }n\in\N,
\end{equation}
which is exactly our condition (H1) applied to the function $H$ and the sequences $(\tilde{x}_n)_{n\in\N}$ and $(u_n)_{n\in\N}.$

Observe that $$\p H(x,y)=\{\p f(x)+\n g(x)+x-y\}\times\{y-x\}.$$
Obviously
$$\p H(u_n)=\{\p f(\tilde{x}_n)+\n g(\tilde{x}_n)-\tilde{\d}_n(\tilde{x}_{n}-\tilde{x}_{n-1})\}\times\{\tilde{\d}_n(\tilde{x}_{n}-\tilde{x}_{n-1})\}$$
for all $n\ge 1.$

From PADISNO and also from c-PADISNO we have $\frac{1}{s}(y_{n-1}-x_{n})-\n g(z_{n-1})\in\p f({x}_n)$ for all $n\ge 1$,
hence
$$\frac{1}{s}(\tilde{y}_{n-1}-\tilde{x}_{n})-\n g(\tilde{z}_{n-1})\in\p f(\tilde{x}_n),$$ for all $n\ge 1.$

Consequently,
$$W_n=\left(\frac{1}{s}(\tilde{y}_{n-1}-\tilde{x}_{n})-\n g(\tilde{z}_{n-1})+\n g(\tilde{x}_n)-\tilde{\d}_n(\tilde{x}_{n}-\tilde{x}_{n-1}),\tilde{\d}_n(\tilde{x}_{n}-\tilde{x}_{n-1})\right)\in \p H(u_n),$$
for all $n\ge 1.$

We have, for all $n\ge 1$ that
\begin{align}\label{fordif}
\|W_n\|&=\sqrt{\left\|\frac{1}{s}(\tilde{y}_{n-1}-\tilde{x}_{n})-\n g(\tilde{z}_{n-1})+\n g(\tilde{x}_n)-\tilde{\d}_n(\tilde{x}_{n}-\tilde{x}_{n-1})\right\|^2+\|\tilde{\d}_n(\tilde{x}_{n}-\tilde{x}_{n-1})\|^2}\\
\nonumber&\le\sqrt{\frac{2}{s^2}\|\tilde{y}_{n-1}-\tilde{x}_{n}\|^2+2L_g^2\|\tilde{z}_{n-1}-\tilde{x}_{n}\|^2+4\tilde{\d}_n^2\|\tilde{x}_{n}-\tilde{x}_{n-1}\|^2}\\
\nonumber&\le\sqrt{\left(\frac{4}{s^2}+4L_g^2+4\tilde{\d}_n^2\right)\|\tilde{x}_{n}-\tilde{x}_{n-1}\|^2+\left(\frac{4\tilde{\a}_{n-1}^2}{s^2}+4L_g^2\tilde{\b}_{n-1}^2\right)\|\tilde{x}_{n-1}-\tilde{x}_{n-2}\|^2}\\
\nonumber&\le b(\|\tilde{x}_{n}-\tilde{x}_{n-1}\|+\|\tilde{x}_{n-1}-\tilde{x}_{n-2}\|),
\end{align}
where $b=\sqrt{\max\left(\frac{4}{s^2}+4L_g^2+4\tilde{\d}_n^2,\frac{4\tilde{\a}_{n-1}^2}{s^2}+4L_g^2\tilde{\b}_{n-1}^2\right)}.$
On the other hand
$\dist(0,\p H(u_n))\le\|W_n\|$ for all $n\ge 1$, which combined with \eqref{fordif} gives (H2).
\end{proof}

Next we present some results concerning the limit points of the sequences $(x_n)_{n\in\N}$ and $(u_n)_{n\in\N}$ and the critical points of the functions $f+g$ and $H$, respectively.

\begin{lemma}\label{regularization} In the settings of problem \eqref{propt}, for some starting points $x_{-1}=x_0\in\R^m,$ consider the sequences $(x_n)_{n\in\N},\,(y_n)_{n\in\N}$ generated by PADISNO or c-PADISNO. Assume that $f+g$ is bounded from below. Then, the following statements hold true.
\begin{itemize}
\item[(i)] $\omega((y_n)_{n\in\N})=\omega((z_n)_{n\in\N})=\omega((x_n)_{n\in\N})\subseteq\crit(f+g)$, further $\omega((u_n)_{n\in\N})=\{(\ol x,\ol x)\in\R^m\times\R^m: \ol x\in\omega((x_n)_{n\in\N})\}$;
\item[(ii)]  $\crit(H)=\{(x,x)\in\R^m\times\R^m: x\in\crit(f+g)\}$ and $\omega((u_n)_{n\in\N})\subseteq\crit(H)$;
\item[(iii)] $(H(u_n))_{n\in\N}$ is convergent and $H$ is constant on $\omega((u_n)_{n\in\N})$.
\end{itemize}
\end{lemma}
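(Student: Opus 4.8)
The plan is to treat the three claims in order, exploiting three facts already established: the square-summability $\sum_{n\ge 1}\|x_n-x_{n-1}\|^2<+\infty$ from Lemma \ref{descentnc}(iii) (equivalently Lemma \ref{descent}(iii)), which forces $\|x_n-x_{n-1}\|\To 0$; the convergence of both $((f+g)(x_n))_{n\in\N}$ and $((f+g)(x_n)+\d_n\|x_n-x_{n-1}\|^2)_{n\in\N}$ from part (ii) of the same lemmas; and the boundedness of $(\a_n)_{n\in\N},(\b_n)_{n\in\N}$.

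For (i), since $\|y_n-x_n\|=|\a_n|\,\|x_n-x_{n-1}\|\To 0$ and $\|z_n-x_n\|=|\b_n|\,\|x_n-x_{n-1}\|\To 0$, the three sequences $(x_n),(y_n),(z_n)$ are asymptotically identical and hence share their cluster point sets. To prove $\omega((x_n)_{n\in\N})\subseteq\crit(f+g)$, I fix $x^*\in\omega((x_n)_{n\in\N})$ and a subsequence $x_{n_k}\To x^*$; because $\|x_n-x_{n-1}\|\To 0$, also $x_{n_k-1},y_{n_k-1},z_{n_k-1}\To x^*$. From the optimality condition in PADISNO / c-PADISNO one has $\frac1s(y_{n-1}-x_n)-\n g(z_{n-1})\in\p f(x_n)$, so by the sum rule $v_n:=\frac1s(y_{n-1}-x_n)-\n g(z_{n-1})+\n g(x_n)\in\p(f+g)(x_n)$, and continuity of $\n g$ gives $v_{n_k}\To 0$. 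To invoke the closedness of the graph of $\p(f+g)$ I must additionally show $(f+g)(x_{n_k})\To(f+g)(x^*)$, and this is the main obstacle: lower semicontinuity only supplies $\liminf(f+g)(x_{n_k})\ge(f+g)(x^*)$. The reverse inequality comes from the defining minimization property, by comparing $x_{n_k}$ with the competitor $x^*$ in the argmin (resp.\ proximal) step at index $n_k-1$ and passing to the $\limsup$, the linear and quadratic perturbation terms converging to the same limits on both sides; this yields $\limsup f(x_{n_k})\le f(x^*)$, hence $\lim(f+g)(x_{n_k})=(f+g)(x^*)$. The closedness criterion then gives $0\in\p(f+g)(x^*)$. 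Finally, since $\tilde\d_n$ is bounded and $\|\tilde x_n-\tilde x_{n-1}\|\To 0$, the second coordinate of $u_n=(\tilde x_n,\tilde x_n+\tilde\d_n(\tilde x_n-\tilde x_{n-1}))$ is asymptotically equal to the first, so $\omega((u_n)_{n\in\N})=\{(\ol x,\ol x):\ol x\in\omega((x_n)_{n\in\N})\}$.

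For (ii), I compute $\p H$ directly: from $H(x,y)=(f+g)(x)+\frac12\|y-x\|^2$ the sum rule gives $\p H(x,y)=(\p(f+g)(x)+(x-y))\times\{y-x\}$. Thus $(x,y)\in\crit H$ forces $y=x$ from the second block and then $0\in\p(f+g)(x)$ from the first, so $\crit(H)=\{(x,x):x\in\crit(f+g)\}$. The inclusion $\omega((u_n)_{n\in\N})\subseteq\crit(H)$ is then immediate by combining this with (i).

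For (iii), note that $H(u_n)=(f+g)(\tilde x_n)+\d_{n+N}\|\tilde x_n-\tilde x_{n-1}\|^2$ is precisely the (index-shifted) convergent sequence of Lemma \ref{descentnc}(ii) / Lemma \ref{descent}(ii), so $(H(u_n))_{n\in\N}$ converges to some $\ell\in\R$; as $\|x_n-x_{n-1}\|\To 0$, in fact $\ell=\lim(f+g)(x_n)$. On the other hand, the subsequential value-convergence established in (i) shows $(f+g)(\ol x)=\ell$ for every $\ol x\in\omega((x_n)_{n\in\N})$, and since $H(\ol x,\ol x)=(f+g)(\ol x)$, the function $H$ takes the constant value $\ell$ on $\omega((u_n)_{n\in\N})$.
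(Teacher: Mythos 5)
Your proposal is correct and follows essentially the same route as the paper's own proof: asymptotic identification of $(x_n),(y_n),(z_n)$ via $\|x_n-x_{n-1}\|\To 0$, extraction of a subgradient $v_n\in\p(f+g)(x_n)$ tending to $0$, the two-sided value argument (lower semicontinuity for the $\liminf$, comparison with $x^*$ in the argmin/proximal step for the $\limsup$), closedness of the limiting subdifferential graph, and the explicit computation of $\p H$ for (ii). The only difference is cosmetic: you spell out the constancy argument in (iii), which the paper dismisses with a one-line reference to Lemma \ref{descentnc}(ii)/Lemma \ref{descent}(ii).
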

\begin{proof}
(i) Let $\ol x\in\omega((x_n)_{n\in\N}).$ Then, there exists a subsequence $(x_{n_k})_{k\in \N}$ of $(x_n)_{n\in\N}$ such that
$$\lim_{k\to+\infty}x_{n_k}=\ol x.$$
Since by \eqref{forcont} we get that $\lim_{n\To+\infty}(x_n-x_{n-1})=0$ and the sequences $(\a_n)_{n\in\N},\,\left(\b_n\right)_{n\in\N}$ converge, we obtain that
$$\lim_{k\to+\infty}y_{n_k}=\lim_{k\to+\infty}u_{n_k}=\lim_{k\to+\infty}x_{n_k}=\ol x,$$
which shows that $$\omega((x_n)_{n\in\N})\subseteq \omega((z_n)_{n\in\N})\mbox{ and }\omega((x_n)_{n\in\N})\subseteq \omega((y_n)_{n\in\N}).$$

Conversely, if $\ol y\in \omega((y_n)_{n\in\N})$ then, from \eqref{forcont} it results that $\ol y\in \omega((x_n)_{n\in\N}).$ Further, if $\ol z\in \omega((z_n)_{n\in\N})$ then by using \eqref{forcont} again we obtain that  $\ol z\in \omega((y_n)_{n\in\N}).$
Hence,
$$\omega((y_n)_{n\in\N})=\omega((u_n)_{n\in\N})=\omega((x_n)_{n\in\N}).$$

We show next that
$$\omega((x_n)_{n\in\N})\subseteq\crit(f+g).$$

Let $\ol{x}\in\omega((x_n)_{n\in\N})$ and $(x_{n_k})_{k\in \N}$ a subsequence of $(x_n)_{n\in\N}$ such that
$$\lim_{k\To+\infty}x_{n_k}=\ol x.$$
We have to show that $0\in\p(f+g)(\ol{x}).$ From PADISNO or c-PADISNO we have for every $k \geq 1$
$$\frac{1}{s}(y_{n_{k}-1}-x_{n_{k}})-\n g(z_{n_{k}-1})\in\p f(x_{n_{k}})$$
hence,
$$p_k=\frac{1}{s}(y_{n_{k}-1}-x_{n_{k}})+(\n g(x_{n_k})-\n g(z_{n_{k}-1}))\in\p (f+g)(x_{n_{k}}).$$

In virtue of \eqref{forcont}
$$y_{n_{k}-1}-x_{n_{k}}\To 0\mbox{ and }z_{n_{k}-1}-x_{n_{k}}\To 0,\,k\To+\infty,$$
consequently
$$y_{n_{k}-1}\To \ol x\mbox{ and }z_{n_{k}-1}\To \ol x,\,k\To+\infty.$$

Further,
$$\|\n g(x_{n_k})-\n g(z_{n_{k}-1})\|\le L_g\|z_{n_{k}-1}-x_{n_{k}}\|,$$
thus
$$\n g(x_{n_k})-\n g(z_{n_{k}-1})\To 0,\,k\To+\infty.$$

Consequently, $p_k\To 0,\,k\To+\infty.$

We show that $\lim_{k\To+\infty}(f+g)(x_{n_k})=(f+g)(\ol{x}).$ Since $f$ is lower semicontinuous, one has
\begin{equation}\label{finf}
\liminf_{k\To+\infty}f(x_{n_k})\ge f(\ol{x}).
\end{equation}

Further  we have for every $k \geq 1$
\begin{align*}
x_{n_k}&=\argmin_{y\in\R^m}\left[f(y)+\frac{1}{2 s}\|y-y_{n_k-1}+s \nabla g(z_{n_k-1})\|^2\right]  \\
&=\argmin_{y\in\R^m}\left[f(y)+\frac{1}{2s}\|y-y_{n_k-1}\|^2+\<y-y_{n_k-1},\n g(z_{n_k-1})\>+\frac{s}{2}\|\n g(z_{n_k-1})\|^2\right]  \\
&=\argmin_{y\in\R^m}\left[f(y)+\frac{1}{2s}\|y-y_{n_k-1}\|^2+\<y-y_{n_k-1},\n g(z_{n_k-1})\>\right].
\end{align*}

Hence, for every $k \geq 1$ we have
$$f(x_{n_k})+\frac{1}{2s}\|x_{n_k}-y_{n_k-1}\|^2+\<x_{n_k}-y_{n_k-1},\n g(z_{n_k-1})\le f(\ol x)+\frac{1}{2s}\|\ol x-y_{n_k-1}\|^2+\<\ol x-y_{n_k-1},\n g(z_{n_k-1})\>.$$
Taking the limit superior as $k\To+\infty$, we obtain
\begin{equation}\label{fsup}
\limsup_{k\To+\infty}f(x_{n_k})\le f(\ol{x}).
\end{equation}
Now \eqref{finf} and \eqref{fsup} show that $\lim_{k\To+\infty}f(x_{n_k})= f(\ol{x})$ and, since $g$ is continuous,  we obtain
$$\lim_{k\To+\infty}(f+g)(x_{n_k})= (f+g)(\ol{x}).$$
By the  closedness criterion of the graph of the limiting subdifferential it follows that
$$0\in\p(f+g)(\ol{x}).$$
So we have shown that
$$\omega((y_n)_{n\in\N})=\omega((z_n)_{n\in\N})=\omega((x_n)_{n\in\N})\subseteq\crit(f+g).$$

Obviously $\omega((\tilde{x}_n)_{n\in\N})=\omega((x_n)_{n\in\N})$ and since the sequences $(\a_n)_{n\in\N},\,(\b_n)_{n\in\N}$ are bounded, (convergent), from \eqref{forcont} one gets
\begin{equation}\label{rege1}
\lim_{n\To+\infty}\tilde{\d}_n(\tilde{x}_{n}-\tilde{x}_{n-1})=0.
\end{equation}
Let $(\ol x,\ol y)\in \omega((u_n)_{n\in\N}).$ Then, there exists a subsequence $(u_{n_k})_{k\in\N}$ such that
$u_{n_k}\To(\ol x,\ol y),\,k\To+\infty.$
But we have
$u_n=\left(\tilde{x}_{n},\tilde{x}_{n}+\tilde{\d}_n(\tilde{x}_{n}-\tilde{x}_{n-1})\right),$ for all $n\in\N$, consequently from \eqref{rege1} we obtain
$$\tilde{x}_{n_k}\To \ol x\mbox{ and }\tilde{x}_{n_k}\To \ol y,\,k\To+\infty.$$
Hence, $\ol x=\ol y$ and $\ol x\in \omega((x_n)_{n\in\N})$  which shows that
$$\omega((u_n)_{n\in\N})\subseteq\{(\ol x,\ol x)\in\R^m\times\R^m: \ol x\in\omega((x_n)_{n\in\N})\}.$$
Conversely, if $\ol x \in \omega((\tilde{x}_n)_{n\in\N})$ then there exists a subsequence $(\tilde{x}_{n_k})_{k\in \N}$ such that
$\lim_{k\to+\infty}\tilde{x}_{n_k}=\ol x.$ But then, by using \eqref{rege1} we obtain at once that
$u_{n_k}\To (\ol x,\ol x),\,k\To+\infty,$ hence by using the fact that $\omega((\tilde{x}_n)_{n\in\N})=\omega((x_n)_{n\in\N})$ we obtain
$$\{(\ol x,\ol x)\in\R^m\times\R^m: \ol x\in\omega((x_n)_{n\in\N})\}\subseteq \omega((u_n)_{n\in\N}).$$

For (ii) by using the fact that
$$\p H(x,y)=\{\p f(x)+\n g(x)+x-y\}\times\{y-x\}$$
we get
$$\crit H=\{(x,y)\in\R^m\times\R^m:(0,0)\in \{\p f(x)+\n g(x)+x-y\}\times\{y-x\}\}.$$
Hence, $x=y$ and $0\in\p f(x)+\n g(x)=\p (f+g)(x),$
consequently
$$\crit H=\{(x,x)\in\R^m\times\R^m:0\in\p (f+g)(x)\}.$$

Now, since $\omega((u_n)_{n\in\N})\subseteq\{(\ol x,\ol x)\in\R^m\times\R^m: \ol x\in\omega((x_n)_{n\in\N})\}$ and $\omega((x_n)_{n\in\N})\subseteq\crit(f+g)$  and  we have
$$\omega((u_n)_{n\in\N})\subseteq\{(\ol x,\ol x)\in\R^m\times\R^m: \ol x\in \crit(f+g)\}=\crit(H).$$

(iii) Follows directly by (ii) in Lemma \ref{descentnc} or Lemma \ref{descent}.
\end{proof}

Now we are able to prove one of the main result of the paper, namely Theorem \ref{convergence}.

\begin{proof}(\rm Theorem \ref{convergence}) Let $(u_n)_{n\in\N}$ be the sequence defined by \eqref{zform}. Since $x^*\in \omega((x_n)_{n\in\N})$ according to Lemma \ref{regularization} (i) one has $x^*\in\crit(f+g)$ and  $u^*=(x^*,x^*)\in\omega((u_n)_{n\in\N}).$

From Proposition \ref{condH} we get that the assumptions (H1)-(H3) of Theorem \ref{thabstrconv} are satisfied with  the  function $H$, the sequences $(u_n)_{n\in\N}$ and $(\tilde{x}_n)_{n\in\N}.$

It remained to show (H4). We have shown in the proof of Lemma \ref{regularization} that if $x^*\in\omega((x_n)_{n\in\N})$ and
$x_{n_k}\To x^*,\,k\To+\infty$, then
$$(f+g)(x_{n_k})\To (f+g)(x^*),\,k\To+\infty.$$
But then, by using \eqref{forcont} we get that
$$u_{n_k}\To (x^*,x^*)=u^*,\mbox{ and }H(u_{n_k})\To (f+g)(x^*)=H(u^*),\,k\To+\infty.$$

 Hence, according to Theorem \ref{thabstrconv}, the sequence $(\tilde{x}_n)_{n\in\N}$ converges to $x^*$ as $n\To+\infty$. But then obviously the sequence $({x}_n)_{n\in\N}$ converges to $x^*$ as $n\To+\infty$.
\end{proof}

\begin{remark}\label{rateyz} Note that under the assumptions of Theorem \ref{convergence} we also have that
$$\lim_{n\To+\infty}y_n=\lim_{n\To+\infty}z_n= x^*\mbox{ and }$$
$$\lim_{n\To+\infty}(f+g)(x_n)=\lim_{n\To+\infty}(f+g)(y_n)=\lim_{n\To+\infty}(f+g)(z_n)=(f+g)(x^*).$$
\end{remark}

\begin{corollary}\label{sa} In the settings of problem \eqref{propt}, for some starting points $x_{-1}=x_0\in\R^m,$ consider the sequence $(x_n)_{n\in\N}$ generated by PADISNO or c-PADISNO. Assume that $f+g$ is semi-algebraic and bounded from below. Assume further that $\omega((x_n)_{n\in\N})\neq\emptyset.$
 Then, the sequence $(x_n)_{n\in\N}$ converges to a critical point of the objective function $f+g.$
\end{corollary}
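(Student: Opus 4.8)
The plan is to deduce Corollary \ref{sa} directly from Theorem \ref{convergence}: the only gap between the two statements is that Theorem \ref{convergence} assumes the KL property of $H$ at a cluster point, whereas here we are handed the stronger—but more readily verifiable—hypothesis that $f+g$ is semi-algebraic. So the whole argument reduces to showing that this semi-algebraicity forces $H$ to satisfy the KL property at the relevant point, after which Theorem \ref{convergence} applies verbatim.

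First I would fix a cluster point. Since $\omega((x_n)_{n\in\N})\neq\emptyset$ by hypothesis, choose $x^*\in\omega((x_n)_{n\in\N})$ and set $z^*=(x^*,x^*)$. Because $f+g$ is bounded from below, Lemma \ref{regularization}(i)--(ii) apply and yield $x^*\in\crit(f+g)$ and hence $z^*\in\crit(H)\subseteq\dom\p H$; this guarantees that $z^*$ is a point at which it is meaningful to speak of the KL property (i.e.\ the property there is nonvacuous).

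Next I would verify that $H$ is a KL function. The map $(x,y)\mapsto(f+g)(x)$ is semi-algebraic on $\R^m\times\R^m$, being the composition of the semi-algebraic function $f+g$ with the semi-algebraic projection $(x,y)\mapsto x$; the map $(x,y)\mapsto\frac12\|y-x\|^2$ is a polynomial and hence semi-algebraic; and a finite sum of semi-algebraic functions is semi-algebraic. Therefore $H$ is semi-algebraic. By the standard fact that every semi-algebraic function satisfies the KL property at each point of its subdifferential domain (see \cite{b-d-l2006,att-b-red-soub2010,b-sab-teb}), $H$ is a KL function; in particular $H$ has the KL property at $z^*$.

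With $x^*$ a cluster point of $(x_n)_{n\in\N}$ and $H$ satisfying the KL property at $z^*=(x^*,x^*)$, every hypothesis of Theorem \ref{convergence} is met, so $(x_n)_{n\in\N}$ converges to $x^*$ and $x^*\in\crit(f+g)$, which is exactly the claimed conclusion. I do not anticipate a genuine obstacle; the only point requiring care is the bookkeeping that $z^*\in\dom\p H$, so that invoking the KL property of the KL function $H$ at $z^*$ is legitimate, and this is precisely what Lemma \ref{regularization} supplies.
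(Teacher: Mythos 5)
Your proposal is correct and follows essentially the same route as the paper: establish that $H$ is semi-algebraic (sum of semi-algebraic functions), conclude it is a KL function and hence satisfies the KL property at $z^*=(x^*,x^*)$ for a cluster point $x^*$, then invoke Theorem \ref{convergence}. The only difference is cosmetic: you additionally verify via Lemma \ref{regularization} that $z^*\in\dom\p H$ so that invoking the KL property there is nonvacuous, a bookkeeping step the paper leaves implicit.
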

\begin{proof}
  Since the class of semi-algebraic functions is closed under addition (see for example \cite{b-sab-teb}) and
$(x,y) \mapsto \frac12\|x-y\|^2$ is semi-algebraic, we obtain that the  function
$$H:\R^m\times\R^m\To\R\cup\{+\infty\},\,H(x,y)=(f+g)(x)+\frac12\|y-x\|^2$$ is semi-algebraic. Consequently, $H$ is a KL function. In particular $H$ has the Kurdyka-{\L}ojasiewicz property at a point $z^*=(x^*,x^*),$ where $x^*\in \omega((x_n)_{n\in\N}).$ The conclusion follows from Theorem \ref{convergence}.
\end{proof}

\begin{remark}\label{r2} In order to apply Theorem \ref{convergence} or Corollary \ref{sa} we need to assume that $\omega((x_n)_{n\in\N})$ is nonempty. Obviously, this condition is satisfied whenever the sequence $(x_n)_{n\in\N}$ is bounded. Note that the boundedness of $(x_n)_{n\in\N}$ is guaranteed if we assume that the objective function $f+g$ is coercive, that is,
$\lim_{\|x\|\To+\infty}(f+ g)(x)=+\infty.$
\end{remark}

An immediate consequence of Theorem \ref{convergence} and Remark \ref{r2} is the following result.

\begin{corollary}\label{fornumex} Assume that $f+g$ is a coercive function. In the settings of problem \eqref{propt}, for some starting points $x_{-1}=x_0\in\R^m,$ consider the sequence $(x_n)_{n\in\N}$ generated by PADISNO or c-PADISNO.  Assume further that
$$H:\R^m\times\R^m\To\R\cup\{+\infty\},\,H(x,y)=(f+g)(x)+\frac12\|y-x\|^2$$ is a KL function.

 Then, the sequence $(x_n)_{n\in\N}$ converges to a critical point of the objective function $f+g.$
\end{corollary}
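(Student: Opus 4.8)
The plan is to observe that Corollary \ref{fornumex} is a mere packaging of Theorem \ref{convergence}: coercivity of $f+g$ automatically supplies both hypotheses of that theorem that are not explicitly assumed here, namely that $f+g$ is bounded from below and that the set $\omega((x_n)_{n\in\N})$ is nonempty. Once these are in hand, the KL assumption on $H$ provides the remaining ingredient and Theorem \ref{convergence} concludes the argument.

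First I would show that coercivity forces $f+g$ to be bounded from below. Since $f$ is proper and lower semicontinuous and $g$ is continuous, $f+g$ is proper and lower semicontinuous; fixing any $\ol x\in\dom f$ with value $(f+g)(\ol x)=c_0\in\R$ and using $\lim_{\|x\|\To+\infty}(f+g)(x)=+\infty$, there is $R>0$ (which we may take so that $\ol x\in\ol B(0,R)$) with $(f+g)(x)>c_0$ whenever $\|x\|>R$. By the Weierstrass theorem the lower semicontinuous function $f+g$ attains its infimum $m\in\R$ on the compact ball $\ol B(0,R)$, and since $m\le c_0$ we get $(f+g)(x)\ge m$ for every $x\in\R^m$. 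Hence $f+g$ is bounded from below, so Lemma \ref{descentnc} (for PADISNO) and Lemma \ref{descent} (for c-PADISNO) apply.

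Next I would establish that $(x_n)_{n\in\N}$ is bounded. From inequality \eqref{forcoercive} in the proof of Lemma \ref{descentnc} (and its analogue, noted to hold by the same argument, in the proof of Lemma \ref{descent} for c-PADISNO) we have, for every $r\ge N$,
$$
(f+g)(x_{r+1})\le (f+g)(x_N)+\d_N\|x_N-x_{N-1}\|^2=:M,
$$
so the tail $((f+g)(x_n))_{n\ge N}$ is bounded above by the constant $M$. Coercivity then yields some $R'>0$ with $(f+g)(x)>M$ for all $\|x\|>R'$, forcing $\|x_n\|\le R'$ for every $n\ge N$; the finitely many remaining terms being trivially bounded, the whole sequence $(x_n)_{n\in\N}$ is bounded. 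As observed in Remark \ref{r2}, a bounded sequence in $\R^m$ possesses at least one cluster point, so $\omega((x_n)_{n\in\N})\neq\emptyset$ and I may fix $x^*\in\omega((x_n)_{n\in\N})$.

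Finally I would verify that the KL property of $H$ is available at $z^*=(x^*,x^*)$. By Lemma \ref{regularization}(i) we have $x^*\in\crit(f+g)$, and by the characterization of $\crit(H)$ in Lemma \ref{regularization}(ii) this gives $z^*\in\crit(H)\subseteq\dom\p H$. Since $H$ is assumed to be a KL function, it satisfies the Kurdyka-\L{}ojasiewicz property at every point of $\dom\p H$, in particular at $z^*$. All the hypotheses of Theorem \ref{convergence} being met, that theorem delivers the convergence of $(x_n)_{n\in\N}$ to $x^*$ together with $x^*\in\crit(f+g)$, which is the assertion. I do not expect any genuine obstacle here: the only content of the corollary is the elementary remark that coercivity simultaneously guarantees boundedness from below and boundedness of the iterates, after which everything reduces to Theorem \ref{convergence}.
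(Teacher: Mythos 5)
Your proposal is correct and follows exactly the route the paper intends: the paper gives no separate proof, presenting the corollary as an immediate consequence of Theorem \ref{convergence} together with Remark \ref{r2} (coercivity $\Rightarrow$ bounded iterates $\Rightarrow$ $\omega((x_n)_{n\in\N})\neq\emptyset$, plus the KL-function assumption supplying the KL property at the cluster point). You merely make explicit the details the paper leaves implicit --- boundedness from below via Weierstrass, boundedness of $(x_n)_{n\in\N}$ via \eqref{forcoercive}, and $z^*\in\dom\p H$ via Lemma \ref{regularization} --- all of which are sound.
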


\section{Convergence rates for the numerical schemes PADISNO and c-PADISNO}

In this section we  prove Theorem \ref{convergencerates} concerning the convergence rates for the sequences generated by PADISNO and c-PADISNO in terms of the KL exponent of the regularization function $H.$ Further, some particular instances of Theorem \ref{convergencerates} will be discussed. 

\begin{proof}(\rm Theorem \ref{convergencerates}) The fact that the sequences $(x_n)_{n\in\N},\,(y_n)_{n\in\N}$ and $(z_n)_{n\in\N}$ converge to $x^*$ and $x^*$ is a critical point of the objective function $f+g$ follows directly from Theorem \ref{convergence} and Remark \ref{rateyz}. In order to prove (a)-(c) we apply Theorem \ref{thabstrrate} to the function $H$ and the sequences $(u_n)_{n\in\N}$ and $(\tilde{x}_{n})_{n\in\N}$ defined by \eqref{zform}.

 {\bf (a)} Assume that $\t=0$. Taking into account that
$$H(u_n)=(f+g)(\tilde{x}_n)+\d_{n+N}\|\tilde{x}_n-\tilde{x}_{n-1}\|^2,\mbox{ for all }n\in\N$$
according to Theorem \ref{thabstrrate} we have  that $((f+g)(\tilde{x}_n)+\d_{n+N}\|\tilde{x}_n-\tilde{x}_{n-1}\|^2)_{n\in\N},$ $(\tilde{x}_n)_{n\in\N}$ and $(u_n)_{n\in\N}$ converge in a finite number of steps. But then $\tilde{x}_n=x^*$  after an index $N_0\in \N$. This leads to $x_n=x^*$ for all $n\ge N+N_0$. Consequently, $(f+g)(x_n)=(f+g)(x^*)$  for all $n\ge N+N_0$. The forms of  the sequences $(y_n)_{n\in\N},\,(z_n)_{n\in\N}$ lead at once that
$$y_n=x^*,\,z_n=x^*,\mbox{ for all }n\ge N+N_0+1.$$

Further,   $\tilde{x}_n-\tilde{x}_{n-1}=0$ for $n\ge N_0+1$, hence by using the fact that $u_n=\left(\tilde{x}_{n},\tilde{x}_{n}+\tilde{\d}_n(\tilde{x}_{n}-\tilde{x}_{n-1})\right)$ we get that $$u_n=(x^*,x^*),\mbox{ for all }n\ge N+N_0+1.$$
\vskip0.3cm

{\bf (b)} Assume that $\t\in\left(0,\frac12\right].$ Then, according to Theorem \ref{thabstrrate}, there exist $A_1>0,$ $Q\in[0,1)$  and $\ol k\in\N$ such that
$H(u_{n})-H(u^* )\le A_1 Q^n$, $\|\tilde{x}_n-x^*\|\le A_1 Q^{\frac{n}{2}}$ and $\|u_n-u^*\|\le A_1 Q^{\frac{n}{2}}$ for  every $n\ge \ol k$.

Hence,
$$(f+g)(\tilde{x}_n)+\d_{n+N}\|\tilde{x}_n-\tilde{x}_{n-1}\|^2-(f+g)(x^*)\le A_1 Q^n,\mbox{ for all }n\ge\ol k$$
which leads to
\begin{equation}\label{forratefg}
(f+g)({x}_n)-(f+g)(x^*)\le \left(A_1 Q^{-N}\right) Q^n,\mbox{ for all }n\ge\ol k+N.
\end{equation}

Further, $\|\tilde{x}_n-x^*\|\le A_1 Q^{\frac{n}{2}}$  for all $n\ge\ol k$ yields
\begin{equation}\label{forratex}
\|x_n-x^*\|\le \left(A_1 Q^{-\frac{N}{2}}\right) Q^{\frac{n}{2}},\mbox{ for all }n\ge\ol k+N.
\end{equation}

Now, obviously
$$\|x_n-x_{n-1}\|\le\|x_n-x^*\|+\|x_{n-1}-x^*\|\mbox{ for all }n\ge 1,$$ hence
$$\|x_n-x_{n-1}\|\le \left(A_1 Q^{-\frac{N}{2}}\right)\left(1+Q^{-\frac12}\right) Q^{\frac{n}{2}},\mbox{ for all }n\ge\ol k+N+1.$$
From the latter relation and the fact that
$\|y_n-x^*\|\le\|x_n-x^*\|+|\a_n|\|x_n-x_{n-1}\|$ and $|\a_n|$ is bounded, we obtain that there exists $M_1>0$ such that
\begin{equation}\label{forratey}
\|y_n-x^*\|\le M_1 Q^{\frac{n}{2}},\mbox{ for all }n\ge\ol k+N+1.
\end{equation}

By similar arguments we obtain that there exists $M_2>0$  such that
\begin{equation}\label{forratez}
\|z_n-x^*\|\le M_2 Q^{\frac{n}{2}},\mbox{ for all }n\ge\ol k+N+1.
\end{equation}

So,  if we take $\ol N=\ol k+N+1$ and $a_1=\max\left(A_1 Q^{-N},M_1,M_2\right)$ then \eqref{forratefg}, \eqref{forratex}, \eqref{forratey} and \eqref{forratez} show (b).
\vskip0.3cm

{\bf (c)} Assume that $\t\in\left(\frac12,1\right).$ Then, according to Theorem \ref{thabstrrate} there exist  $A_2>0$ and $\ol k\in\N$ such that
$H(u_n)-H(u^*)\le A_2 {n^{-\frac{1}{2\t-1}}},$ $\|\tilde{x}_n-x^*\|\le A_2 {n^{\frac{\t-1}{2\t-1}}}$  and $\|u_n-u^*\|\le A_2 {n^{\frac{\t-1}{2\t-1}}}\mbox{ for all }n\ge\ol k$.

Hence,
$$(f+g)(\tilde{x}_n)+\d_{n+N}\|\tilde{x}_n-\tilde{x}_{n-1}\|^2-(f+g)(x^*)\le A_2 {n^{-\frac{1}{2\t-1}}},\mbox{ for all }n\ge\ol k$$
which leads to
$$
(f+g)({x}_n)-(f+g)(x^*)\le A_2 {(n-N)^{-\frac{1}{2\t-1}}},\mbox{ for all }n\ge\ol k+N.$$
Now, $\sup_{n\ge\ol k+N}A_2 \left(\frac{n-N}{n}\right)^{-\frac{1}{2\t-1}}\le M_3<+\infty,$ hence
\begin{equation}\label{forratefg1}
(f+g)({x}_n)-(f+g)(x^*)\le M_3{n^{-\frac{1}{2\t-1}}},\mbox{ for all }n\ge\ol k+N.
\end{equation}
Since $\|\tilde{x}_n-x^*\|\le A_2 {n^{\frac{\t-1}{2\t-1}}}$ for all $n\ge\ol k$ we get
\begin{equation}\label{forratex1}
\|x_n-x^*\|\le A_2 {(n-N)^{\frac{\t-1}{2\t-1}}}=A_2 \left(\frac{n-N}{n}\right)^{\frac{\t-1}{2\t-1}}n^{\frac{\t-1}{2\t-1}}\le M_4 n^{\frac{\t-1}{2\t-1}},
\end{equation}
 for all $n\ge\ol k+N$, where $M_4=\sup_{n\ge\ol k+N}A_2 \left(\frac{n-N}{n}\right)^{\frac{\t-1}{2\t-1}}<+\infty.$
Further,
$$\|x_n-x_{n-1}\|\le\|x_n-x^*\|+\|x_{n-1}-x^*\|\le  M_4 n^{\frac{\t-1}{2\t-1}}+ M_4 (n-1)^{\frac{\t-1}{2\t-1}},\mbox{ for all }n\ge\ol k+N+1$$
and $\sup_{n\ge \ol k+N+1}\left(\frac{n-1}{n}\right)^{\frac{\t-1}{2\t-1}}<+\infty$, hence there exists $M_5>0$ such that
$$|x_n-x_{n-1}\|\le M_5 n^{\frac{\t-1}{2\t-1}},\mbox{ for all }n\ge\ol k+N+1.$$

From the latter relation and the facts that
$\|y_n-x^*\|\le\|x_n-x^*\|+|\a_n|\|x_n-x_{n-1}\|$ and $|\a_n|$ is bounded, further  $\|z_n-x^*\|\le\|z_n-x^*\|+|\b_n|\|x_n-x_{n-1}\|$ and $|\b_n|$ is bounded, we obtain that there exists $M_6>0$ such that
\begin{equation}\label{forratey1}
\|y_n-x^*\|\le M_6 n^{\frac{\t-1}{2\t-1}},\mbox{ for all }n\ge\ol k+N+1
\end{equation}
and
\begin{equation}\label{forratez1}
\|z_n-x^*\|\le M_6 n^{\frac{\t-1}{2\t-1}} ,\mbox{ for all }n\ge\ol k+N+1.
\end{equation}

So,  if we take $\ol N=\ol k+N+1$ and $a_2=\max\left(M_3,M_4,M_5,M_6\right)$ then from \eqref{forratefg1}, \eqref{forratex1}, \eqref{forratey1} and \eqref{forratez1} we obtain (c).
\end{proof}

According to Theorem 3.6 \cite{LP}, if $f+g$ has the KL property with KL exponent $\t\in\left[\frac12,1\right)$ at $\ol x\in\R^m,$ then the function $H:\R^m\times\R^m\To\R\cup\{+\infty\},\,H(x,y)=(f+g)(x)+\frac12\|y-x\|^2$ has the KL property at $(\ol x,\ol x)\in\R^m\times\R^m$ with the same KL exponent $\t.$ This result allows us to reformulate Theorem \ref{convergencerates}.

\begin{corollary} In the settings of problem \eqref{propt}, for some starting points $x_{-1}=x_0\in\R^m,$  consider the sequence $(x_n)_{n\in\N}$ generated by PADISNO or c-PADISNO. Assume that $f+g$ is bounded from below and  has the Kurdyka-{\L}ojasiewicz property at $x^*\in\omega((x_n)_{n\in\N})$, (which obviously must be assumed  nonempty), with KL exponent $\t\in\left[\frac12,1\right)$. If $\t=\frac12$ then the convergence rates stated at {\rm Theorem \ref{convergencerates}(b)},  if  $\t\in\left(\frac12,1\right)$  then the convergence rates stated at  {\rm Theorem \ref{convergencerates}(c)} hold.
\end{corollary}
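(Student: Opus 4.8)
The plan is to bridge the only gap between the hypotheses here and those of Theorem \ref{convergencerates}: the corollary assumes the Kurdyka--{\L}ojasiewicz property with exponent $\t$ for the original objective $f+g$, whereas Theorem \ref{convergencerates} is phrased in terms of the KL exponent of the regularization $H(x,y)=(f+g)(x)+\frac12\|y-x\|^2$. The single substantive step is to transfer the exponent from $f+g$ to $H$, after which the conclusion is immediate.

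First I would invoke Theorem 3.6 of \cite{LP}, stated just before the corollary: since $f+g$ has the KL property at $x^*$ with exponent $\t\in\left[\frac12,1\right)$, the function $H$ has the KL property at $z^*=(x^*,x^*)$ with the \emph{same} exponent $\t$. This is exactly the place where the restriction $\t\in\left[\frac12,1\right)$ is essential: the cited transfer result preserves the exponent only in this range (for smaller exponents the regularization typically forces the exponent up to $\frac12$, so cases (a) and the interval $\left(0,\frac12\right)$ of Theorem \ref{convergencerates} cannot be reached through this route, which is why the corollary is split only into the two stated cases).

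Next I would check that the remaining hypotheses of Theorem \ref{convergencerates} are in force. The function $f+g$ is assumed bounded from below, and $x^*\in\omega((x_n)_{n\in\N})$ with $\omega((x_n)_{n\in\N})$ nonempty by assumption; these are precisely the standing assumptions of Theorem \ref{convergencerates}. Having established in the previous step that $H$ satisfies the KL property at $z^*=(x^*,x^*)$ with exponent $\t$, every hypothesis of Theorem \ref{convergencerates} is met at the point $z^*$.

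Finally I would simply quote Theorem \ref{convergencerates}: for $\t=\frac12$ we fall into the interval $\left(0,\frac12\right]$ and obtain the linear rates of part (b), while for $\t\in\left(\frac12,1\right)$ we obtain the polynomial rates of part (c), for the sequences $((f+g)(x_n))_{n\in\N}$, $(x_n)_{n\in\N}$, $(y_n)_{n\in\N}$ and $(z_n)_{n\in\N}$. There is no real obstacle in this argument: once the exponent-transfer result of \cite{LP} is applied, the statement is a direct specialization of the already-proved Theorem \ref{convergencerates}, and the only point requiring care is recording why $\t=\frac12$ is routed to case (b) and $\t\in\left(\frac12,1\right)$ to case (c).
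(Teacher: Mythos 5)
Your proposal is correct and follows essentially the same route as the paper: both proofs transfer the KL exponent $\t\in\left[\frac12,1\right)$ from $f+g$ at $x^*$ to the regularization $H$ at $z^*=(x^*,x^*)$ via Theorem 3.6 of \cite{LP}, and then invoke Theorem \ref{convergencerates} directly, with $\t=\frac12$ landing in case (b) and $\t\in\left(\frac12,1\right)$ in case (c). The paper additionally cites Lemma \ref{regularization}(i) to record that $(x^*,x^*)$ is a cluster point of the relevant auxiliary sequence, but this is a minor bookkeeping step already implicit in your verification of the hypotheses.
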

\begin{proof}
 Indeed, from Lemma \ref{regularization} (i) one has $z^*=(x^*,x^*)\in\omega((z_n)_{n\in\N})$ and according to Theorem 3.6 \cite{LP}
$H$ has the Kurdyka-{\L}ojasiewicz property at $z^*$ with KL exponent $\t.$ Hence, Theorem \ref{convergencerates} can be applied.
\end{proof}

In case we assume that the function $f+g$ is strongly convex, then Theorem \ref{convergencerates} assures linear convergence rates for the sequences generated by PADISNO or c-PADISNO. The following result holds.

\begin{theorem}\label{ratestrconv}
In the settings of problem \eqref{propt}, for some starting points $x_{-1}=x_0\in\R^m,$  consider the sequences $(x_n)_{n\in\N},\,(y_n)_{n\in\N}$ and $(z_n)_{n\in\N}$ generated by PADISNO or c-PADISNO. Assume that the objective function $f+g$ is strongly convex and let $x^*$ be the unique minimizer of $f+g.$ Then, there exist $Q\in[0,1)$, $a_1>0$ and $\ol N\in\N$ such that the following statements hold true:
\begin{itemize}
\item[(i)] $(f+g)(x_{n})-(f+g)(x^* )\le a_1 {Q^n}$ for  every $n\ge \ol N$,
\item[(ii)] $\|x_n-x^* \|\le a_1 {Q^{\frac{n}{2}}}$, $\|y_n-x^* \|\le a_1 {Q^{\frac{n}{2}}}$ and $\|z_n-x^* \|\le a_1 {Q^{\frac{n}{2}}}$ for  every $n\ge\ol N$.
\end{itemize}
\end{theorem}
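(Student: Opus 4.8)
The plan is to deduce the result directly from Theorem \ref{convergencerates} by proving that the strong convexity of $f+g$ forces the regularization $H$ to possess the Kurdyka-\L{}ojasiewicz property at the point $(x^*,x^*)$ with exponent $\t=\frac12$. Once this is shown, case (b) of Theorem \ref{convergencerates} delivers exactly the asserted linear rates.

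First I would verify that the hypotheses of Theorem \ref{convergencerates} are in force. A proper, lower semicontinuous, strongly convex function is coercive and attains its infimum at a unique point; hence $f+g$ is bounded from below, and by Remark \ref{r2} the sequence $(x_n)_{n\in\N}$ is bounded, so $\omega((x_n)_{n\in\N})\neq\emptyset$. By Lemma \ref{regularization}(i) every cluster point of $(x_n)_{n\in\N}$ lies in $\crit(f+g)$; since a strongly convex function has a single critical point, namely its minimizer, we conclude $\omega((x_n)_{n\in\N})=\{x^*\}$. Thus $x^*$ is a cluster point and is critical, as required.

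Next I would establish the KL inequality for $f+g$ at $x^*$ with exponent $\frac12$. Writing $F=f+g$ and letting $\mu>0$ be a modulus of strong convexity, for every $x$ and every $v\in\p F(x)$ strong convexity gives $F(y)\ge F(x)+\<v,y-x\>+\frac{\mu}{2}\|y-x\|^2$ for all $y$; minimizing both sides over $y$ (the minimum of the left-hand side being $F(x^*)$, attained at the unique minimizer) yields $F(x^*)\ge F(x)-\frac{1}{2\mu}\|v\|^2$, so that $\|v\|\ge\sqrt{2\mu}\,(F(x)-F(x^*))^{1/2}$. Taking the infimum over $v\in\p F(x)$ produces
$$\dist(0,\p(f+g)(x))\ge\sqrt{2\mu}\,\big((f+g)(x)-(f+g)(x^*)\big)^{1/2}\quad\text{for all }x,$$
which is precisely the KL property of $f+g$ at $x^*$ with exponent $\t=\frac12$ and constant $K=1/\sqrt{2\mu}$ in the sense of Definition \ref{KLexponent}.

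Finally I would transfer this exponent to the regularization. Invoking Theorem 3.6 of \cite{LP}, quoted just before the statement, the KL exponent $\frac12$ of $f+g$ at $x^*$ passes to $H(x,y)=(f+g)(x)+\frac12\|y-x\|^2$ at $(x^*,x^*)$. Therefore all assumptions of Theorem \ref{convergencerates} hold with $\t=\frac12\in\left(0,\frac12\right]$, and its conclusion (b) furnishes constants $Q\in[0,1)$, $a_1>0$ and $\ol N\in\N$ with $(f+g)(x_n)-(f+g)(x^*)\le a_1Q^n$ and $\|x_n-x^*\|,\|y_n-x^*\|,\|z_n-x^*\|\le a_1Q^{n/2}$ for all $n\ge\ol N$, which are exactly statements (i) and (ii). The one genuinely substantive step is the derivation of the KL exponent $\frac12$ from strong convexity; the remaining work is a verification that the already-established machinery applies, so I expect no serious obstacle beyond that computation.
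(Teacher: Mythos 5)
Your proposal is correct and follows essentially the same route as the paper's own proof: strong convexity gives coercivity, boundedness from below, and a unique critical point, so $\omega((x_n)_{n\in\N})=\{x^*\}$ by Lemma \ref{regularization}(i); the KL exponent $\tfrac12$ is transferred from $f+g$ at $x^*$ to $H$ at $(x^*,x^*)$ via Theorem 3.6 of \cite{LP}; and Theorem \ref{convergencerates}(b) then yields exactly the stated rates. The only (minor) difference is that you prove the exponent-$\tfrac12$ KL inequality for the strongly convex function directly via the standard subgradient computation, whereas the paper simply cites \cite{attouch-bolte2009} for this fact; your computation is correct, so this makes the argument marginally more self-contained without changing its structure.
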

\begin{proof}
We emphasize that the strongly convex function $f+g$  is coercive, see \cite{BauComb}. According to \cite{rock-wets} the function $f+g$ is bounded from bellow. According to Lemma \ref{regularization} (i) and the hypotheses of the theorem, $\omega((x_n)_{n\in\N})=\{x^*\},$ hence $x_n\To x^*,\,n\To+\infty.$ According to \cite{attouch-bolte2009}, $f+g$ satisfies the Kurdyka-{\L}ojasiewicz property at $x^*$ with the KL exponent $\t=\frac12.$ Then, according to Theorem 3.6 \cite{LP}, $H$ satisfies the Kurdyka-{\L}ojasiewicz property at $(x^*,x^*)$ with the same KL exponent $\t=\frac12.$ The conclusion now follows from Theorem \ref{convergencerates}.
\end{proof}

\section{Applications to image restoration}

In what follows we apply PADISNO to image restoration. For this purpose we write   the image restoration problem as an optimization problem having in its objective the sum of a non-convex misfit functional and a non-convex non-smooth regularization, (see also \cite{BCL}).

For a given blur operator $A \in \mathbb{R}^{m \times m}$  and a given blurred and noisy image $b \in \R^m$, the image restoration problem consists in estimating the unknown original image $\ol x\in\R^m$ fulfilling $A\ol x=b.$

To this end we solve the   non-convex minimization problem \eqref{propt}
with
$$f(x)=\lambda \|Wx\|_0,\,\l>0,\,g(x)=\sum_{k=1}^M\sum_{l=1}^N\log\left(1+(Ax-b)_{kl}^2\right).$$
 Here $\lambda >0$ is a regularization parameter,
$W:\R^m\rightarrow\R^m$ is a discrete Haar wavelet transform with
four levels and $\|y\|_0=\sum_{i=1}^m|y_i|_0$ ($|\cdot|_0 = |\sgn(\cdot)|$) furnishes the number of nonzero entries of the vector $y=(y_1,...,y_m)\in\R^m$. In this context, $x \in \R^m$
represents the vectorized image $X\in\R^{M\times N}$, where $m = M\cdot N$ and $x_{i,j}$
denotes the normalized value of the pixel located in the $i$-th row and the $j$-th column, for
$i=1,\ldots,M$ and $j=1,\ldots,N$. Arguing as in \cite{BCL}, one can conclude that $H$ in Theorem \ref{convergence} is a KL function.

According to \cite{BCL} the proximal operator of $f$, (which in non-convex case it is not single valued anymore), is
$$\prox\nolimits_{\gamma f}(x)=W^*\prox\nolimits_{\lambda\gamma\|\cdot\|_0}(Wx) \ \forall x \in \R^m \ \forall \gamma >0,$$
where for all $u=(u_1,...,u_m)$ we have
$$\prox\nolimits_{\lambda\gamma\|\cdot\|_0}(u)=(\prox\nolimits_{\lambda\gamma|\cdot|_0}(u_1),...,\prox\nolimits_{\lambda\gamma|\cdot|_0}(u_m))$$
and for all $t\in\R$ $$\prox\nolimits_{\lambda\gamma|\cdot|_0}(t)=\left\{
\begin{array}{ll}
t, & \mbox {if } |t|>\sqrt{2\lambda\gamma},\\
\{0,t\}, & \mbox {if } |t|=\sqrt{2\lambda\gamma},\\
0, & \mbox {otherwise.}
\end{array}\right.$$
In our first experiments we used the $256 \times 256$ cameraman test image which we first blurred by using a Gaussian
blur operator of size $9 \times 9$ and standard deviation $4$ and to which we afterward added a zero-mean white
Gaussian noise with standard deviation $10^{-3}$. We took as regularization parameter $\lambda=10^{-5}$ and in PADISNO we considered different constant inertial parameters $\alpha_n=\a,\,\b_n=\b$ for all $n \geq 1$. Then, the corresponding step size is taken as $s=0.999\cdot\frac{1-2|\a|}{L(2|\b|+1)}$, where the Lipschitz constant of the gradient of the smooth misfit function $g$ is
$L_{g} = 2$. We run PADISNO for 300 iterates. The results obtained, depicted at Figure 5, show the indeed by allowing different and also negative inertial parameters in PADISNO one may expect a better performance.

\begin{figure}[hbt!]
  \centering
  \includegraphics[width=.99\linewidth]{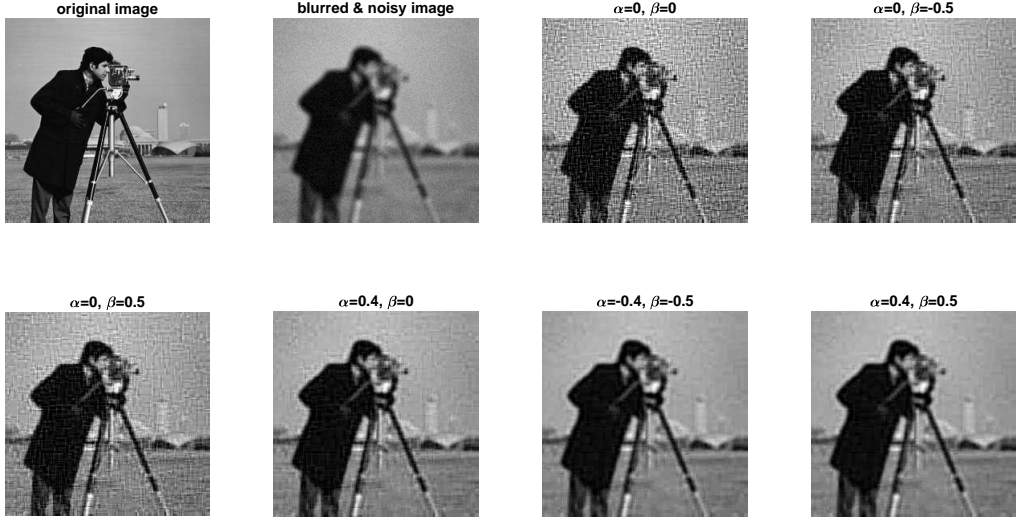}
 \caption{\small Reconstruction  of a blurred and noisy image with PADISNO}
\end{figure}

In our second experiment we used the same $256 \times 256$ cameraman test image for which we add a salt and pepper noise, with $0.3$ noise density. The other characteristics are the same as in our previous experiment. The results obtained are depicted at Figure 6.

\begin{figure}[hbt!]
  \centering
  \includegraphics[width=.99\linewidth]{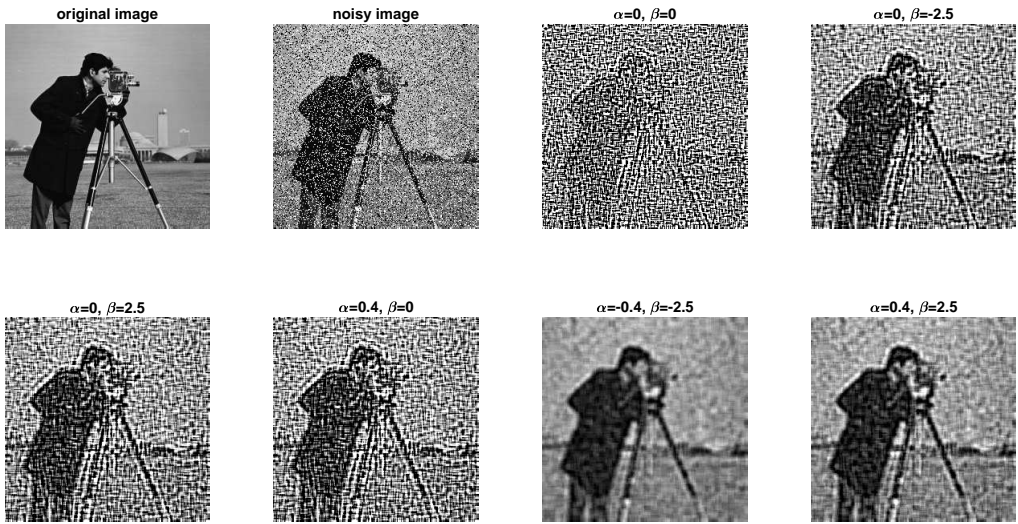}
 \caption{\small Reconstruction of a noisy image with PADISNO}
\end{figure}

Observe that the best result is obtained for $\a=-0.4$ and $\b=-2.5.$ Further, in this experiment we also compared the quality of the recovered images for different values of $\a$ and $\b$ by making use of the improvement in signal-to-noise ratio
(ISNR), which is defined as
$$ \text{ISNR}(n) = 10 \log_{10}\left( \frac{\left\|x-b\right\|^2}{\left\|x-x_n\right\|^2} \right),$$
where $x$, $b$ and $x_n$ denote the original, observed and estimated image at iteration $n$, respectively.

In Table \ref{tab} we list the values of the ISNR-function after $300$ iterations.
One can notice that for negative and different inertial parameters $\a$ and $\b$ we obtain considerable better results.

\begin{table}[hbt!]
\begin{tabular}{ccccccccccccccc}
\hline
$\a$ &       $0$     & $0$     & $0$     &  $-0.4$ & $-0.4$ & $-0.4$ & $-0.4$  & $-0.4$  & $0.4$  & $0.4$   & $0.4$   & $0.4$ & $0.4$ \\
\hline
$\b$ &       $0$     & $-2.5$  & $2.5$   & $-2.5$  & $-0.4$ & $0$    & $0.4$  & $2.5$    & $-2.5$ & $-0.4$  & $0$     & $0.4$ & $2.5$\\
\hline
ISNR(300) & $-12.58$ & $-1.70$ & $-1.70$ & $8.88$ & $4.70$ & $1.45$ & $4.70$  & $8.88$   & $6.96$ & $0.81$  & $-2.86$ & $0.81$ & $6.96$\\
\hline
\end{tabular}
\caption{The ISNR values after 300 iterations for different choices of $\a$ and $\b$.}
\label{tab}
\end{table}

\section{Conclusions}

The novelty of the two forward-backward inertial algorithms studied in the present paper in connection of a structured non-convex optimization problem consist in allowing in the numerical schemes different and also negative inertial parameters. This way we get a better control on the step size and, as some numerical experiments show, our algorithms will have a superior behaviour compared to the known algorithms from the literature where the inertial parameters are equal, (or $\b_n\equiv 0$), and non-negative. The convergence of a sequence generated by our algorithms is obtained by deploying the KL property of a regularization of the objective function. Further, the well known convergence rates are obtained in terms of the KL exponent of this regularization.

The two forward-backward inertial algorithms studied in the present paper in connection to the minimization of the sum of a non-smooth function and a smooth function offer several possibilities for future researches. A first research line is to renounce to the assumption that the gradient of $g$ is Lipschitz continuous and instead of constant step size use an adaptive step size. This can be done in a natural way, by replacing at every iterative step $L_g,$ in the formula that gives the upper bound  for the step size in PADISNO or c-PADISNO, with the local Lipschitz constant of $\n g$ on the segment $[x_{n-1},x_n].$

Another interesting topic is to apply the algorithms studied in this paper to DC programming, by assuming that in the optimization problem \eqref{propt} the function $f$ is convex and $g$ is concave. As we emphasized before, in this case for an appropriate choice of the inertial parameters $(\a_n)$ and $(\b_n)$ the stepsize $s$ can be taken arbitrary large.

\appendix
\section{Proofs of Abstract Convergence Results}

In what follows  we give  full proofs for Lemma \ref{abstrconv}, Corollary \ref{corabstrconv}, Theorem \ref{thabstrconv} and Theorem \ref{thabstrrate}. 

 \begin{proof}(\rm Proof of Lemma \ref{abstrconv})
We divide the proof into the following steps.

{\bf Step I.} We show that $u_1\in B(u^*,\rho)$  and $F(u_1)< F(u^*)+\eta.$

Indeed, $u_0\in B(u^*,\rho)$ and \eqref{e00} assures that $F(u_1)\ge F(u^*).$
 Further, (H1) assures that $$\|x_1-x_0\|\le\sqrt{\frac{F(u_0)-F(u_1)}{a}}\le\sqrt{\frac{F(u_0)-F(u^*)}{a}}.$$
  Since $\|x_1-x^*\|=\|(x_1-x_0)+(x_0-x^*)\|\le \|x_1-x_0\|+\|x_0-x^*\|$  and
 $F(u_1)\le F(u_0)$  the condition  \eqref{e01} leads to
  $$\|x_1-x^*\|\le \|x_0-x^*\|+ \sqrt{\frac{F(u_0)-F(u^*)}{a}}<\frac{\rho}{c_1+c_2}.$$
 Now, from (H3) we have $\|u_1-u^*\|\le c_1\|x_1-x^*\|+c_2\|x_{0}-x^*\|$ hence
  $$\|u_1-u^*\|< c_1\frac{\rho}{c_1+c_2}+c_2\frac{\rho}{c_1+c_2}= \rho.$$
  Thus, $u_1\in B(u^*,\rho),$ moreover \eqref{e00} and (H1) provide that $F(u^*)\le F(u_2)\le F(u_1)\le F(u_0)< F(u^*)+\eta.$

{\bf Step II.} Next we show that whenever for a $k\ge 1$ one has $u_k\in B(u^*,\rho),\,F(u_k)<F(u^*)+\eta$ then it holds that
\begin{equation}\label{fontos}
3\|x_{k+1}-x_{k}\|\le\|x_{k}-x_{k-1}\|+\|x_{k-1}-x_{k-2}\|+\frac{9b}{4a}(\varphi(F(u_k)-F(u^*))-\varphi(F(u_{k+1})-F(u^*))).
\end{equation}
Hence, let $k\ge 1$ and assume that  $u_k\in B(u^*,\rho),\,F(u_k)<F(u^*)+\eta$. Note that from (H1) and \eqref{e00} one has $F(u^*)\le F(u_{k+1})\le F(u_k)<F(u^*)+\eta,$ hence $$F(u_k)-F(u^*),F(u_{k+1})-F(u^*)\in[0,\eta),$$ thus \eqref{fontos} is well stated. Now, if $x_k=x_{k+1}$ then \eqref{fontos} trivially holds.

Otherwise, from (H1) and \eqref{e00} one has
\begin{equation}\label{inter1}
F(u^*)\le F(u_{k+1})<F(u_k)<F(u^*)+\eta.
\end{equation}
Consequently,  $u_k\in B(u^*,\rho)\cap  \{u\in\R^m: F(u^*)<F(u)<F(u^*)+\eta\}$ and $B(u^*,\rho)\subseteq B(u^*,\sigma)\subseteq U$ hence, by using the KL inequality  we get
$$\varphi'(F(u_k)-F(u^*))\dist(0, \p F(u_k))\geq 1.$$
Since $\varphi$ is concave, and  \eqref{inter1} assures that $F(u_{k+1})-F(u^*)\in[0,\eta),$ one has
$$\varphi(F(u_k)-F(u^*))-\varphi(F(u_{k+1})-F(u^*))\ge\varphi'(F(u_k)-F(u^*))(F(u_k)-F(u_{k+1})),$$
consequently,
$$\varphi(F(u_k)-F(u^*))-\varphi(F(u_{k+1})-F(u^*))\ge\frac{F(u_k)-F(u_{k+1})}{\dist(0, \p F(u_k))}.$$
Now, by  using (H1) and (H2) we get  that
$$\varphi(F(u_k)-F(u^*))-\varphi(F(u_{k+1})-F(u^*))\ge\frac{a\|x_{k+1}-x_{k}\|^2}{b(\|x_{k}-x_{k-1}\|+\|x_{k-1}-x_{k-2}\|)}.$$
Consequently,
$$\|x_{k+1}-x_{k}\|\le\sqrt{\frac{b}{a}\left(\varphi(F(u_k)-F(u^*))-\varphi(F(u_{k+1})-F(u^*))\right)(\|x_{k}-x_{k-1}\|+\|x_{k-1}-x_{k-2}\|)}$$
and by arithmetical-geometrical mean inequality we have
\begin{align}\nonumber\|x_{k+1}-x_{k}\|&\le\frac{\|x_{k}-x_{k-1}\|+\|x_{k-1}-x_{k-2}\|}{3}\\
\nonumber&+\frac{3b}{4a}(\varphi(F(u_k)-F(u^*))-\varphi(F(u_{k+1})-F(u^*))),
\end{align}
which leads to \eqref{fontos}.

{\bf Step III.} Now we show by induction that \eqref{fontos} holds for every $k\ge 1.$  Indeed, Step II. can be applied for $k=1$ since according to Step I. $u_1\in B(u^*,\rho)$  and $F(u_1)< F(u^*)+\eta.$ Consequently, for $k=1$ the inequality \eqref{fontos} holds.

 Assume that \eqref{fontos} holds for every $k\in\{1,2,...,n\}$ and we show also that \eqref{fontos} holds for $k=n+1.$ Arguing as at Step II., the condition (H1) and \eqref{e00} assure that $F(u^*)\le F(u_{n+1})\le F(u_n)<F(u^*)+\eta,$ hence it remains to show that $u_{n+1}\in B(u^*,\rho).$
 By using the triangle inequality and (H3) one has
 \begin{align}\label{fonti}
 \|u_{n+1}-u^*\|&\le c_1\|x_{n+1}-x^*\|+c_2\|x_n-x^*\|\\
 \nonumber&=c_1\|(x_{n+1}-x_n)+(x_n-x_{n-1})+\cdots+(x_0-x^*)\|\\
 \nonumber&+c_2\|(x_{n}-x_{n-1})+(x_{n-1}-x_{n-2})+\cdots+(x_0-x^*)\|\\
\nonumber&\le c_1\|x_{n+1}-x_n\|+(c_1+c_2)\|x_0-x^*\|+(c_1+c_2)\sum_{k=1}^{n}\|x_{k}-x_{k-1}\|.
\end{align}

 By summing up \eqref{fontos} from $k=1$ to $k=n$ and using $x_{-1}=x_0$ we obtain
 \begin{equation}\label{fontos1}
\sum_{k=1}^n \|x_k-x_{k-1}\|\le 3\|x_{1}-x_{0}\|-3\|x_{n+1}-x_{n}\|-\|x_n-x_{n-1}\|+\frac{9b}{4a}(\varphi(F(u_1)-F(u^*))-\varphi(F(u_{n+1})-F(u^*))).
\end{equation}

Combining \eqref{fonti} and \eqref{fontos1} and neglecting the negative terms we get
$$\|u_{n+1}-u^*\|\le (3c_1+3c_2)\|x_{1}-x_0\|+(c_1+c_2)\|x_0-x^*\|+(c_1+c_2)\frac{9b}{4a}\varphi(F(u_1)-F(u^*)).$$

But $\varphi$ is strictly increasing and $F(u_1)-F(u^*)\le F(u_0)-F(u^*)$, hence
$$\|u_{n+1}-u^*\|\le (3c_1+3c_2)\|x_{1}-x_0\|+(c_1+c_2)\|x_0-x^*\|+(c_1+c_2)\frac{9b}{4a}\varphi(F(u_0)-F(u^*)).$$

According to (H1) one has
\begin{equation}\label{x1}
\|x_{1}-x_0\|\le\sqrt{\frac{F(u_{0})-F(u_1)}{a}}\le \sqrt{\frac{F(u_{0})-F(u^*)}{a}},
\end{equation}
hence, from \eqref{e01} we get
$$\|u_{n+1}-u^*\|\le (c_1+c_2)\left(\|x_0-x^*\|+3\sqrt{\frac{F(u_{0})-F(u^*)}{a}}+\frac{9b}{4a}\varphi(F(u_0)-F(u^*))\right)<\rho.$$

Hence, we have shown so far that $u_n\in B(u^*,\rho)$ for all $n\in\N.$

{\bf Step IV.} According to Step III. the relation \eqref{fontos} holds for every $k\ge 1.$ But this implies that \eqref{fontos1} holds for every $n\ge 1.$ By using \eqref{x1}  and neglecting the nonpositive terms, \eqref{fontos1} becomes
\begin{equation}\label{fontos2}
\sum_{k=1}^n \|x_k-x_{k-1}\|\le 3\sqrt{\frac{F(u_{0})-F(u^*)}{a}}+\frac{9b}{4a}\varphi(F(u_1)-F(u^*)).
\end{equation}
Now letting $n\To+\infty$ in \eqref{fontos2} we obtain that
$$\sum_{k=1}^{\infty} \|x_k-x_{k-1}\|<+\infty.$$

Obviously the sequence $S_n=\sum_{k=1}^n\|x_k-x_{k-1}\|$ is Cauchy, hence, for all $\e>0$ there exists $N_{\e}\in \N$   such that for all $n\ge N_\e$ and for all $p\in \N$ one has
$$S_{n+p}-S_n\le \e.$$
But
$$S_{n+p}-S_n=\sum_{k={n+1}}^{n+p}\|x_k-x_{k-1}\|\ge \left\|\sum_{k={n+1}}^{n+p}(x_k-x_{k-1})\right\|=\|x_{n+p}-x_n\|$$
hence the sequence $(x_n)_{n\in\N}$ is Cauchy, consequently is convergent. Let
$$\lim_{n\To+\infty}x_n=\ol x.$$

Let $\ol u=(\ol x,\ol x).$ Now, from (H3) we have
$$\lim_{n\To+\infty}\|u_n-\ol u\|\le \lim_{n\To+\infty}(c_1\|x_n-\ol x\|+c_2\|x_{n-1}-\ol x\|)=0,$$
consequently $(u_n)_{n\in\N}$ converges to $\ol u.$

Further, $(u_n)_{n\in\N}\subseteq B(u^*,\rho)$ and $\rho<\sigma$, hence $\ol u\in B(u^*,\sigma).$

 Since $F(u^*)\le F(u_n)<F(u^*)+\eta$ for all $n\ge 1$ and the sequence $(F(u_n))_{n\ge 1}$ is decreasing, obviously $F(u^*)\le \lim_{n\To+\infty} F(u_n)<F(u^*)+\eta.$ Assume that $F(u^*)< \lim_{n\To+\infty} F(u_n).$ Then, one has
$$ u_n\in B(u^*,\sigma)\cap  \{z\in\R^m: F(u^*)<F(z)<F(u^*)+\eta\}$$ and by using the KL inequality and the fact that $\varphi$ is concave, therefore $\varphi'$ is decreasing, we get
$$\varphi'(\lim_{n\To+\infty}F(u_n)-F(u^*))\|\dist(0,\p F(u_n))\|\ge\varphi'(F(u_n)-F(u^*))\|\dist(0,\p F(u_n))\|\geq 1,$$ for all $n\ge 1,$
impossible, since according to (H2) and the fact that $(x_n)_{n\in\N}$ converges one has $$\lim_{n\To+\infty}\|\dist(0,\p F(u_n))\|=0.$$

Consequently, one has $\lim_{n\To+\infty}F(u_n)=F(u^*).$ Since $u_n\To \ol u,\, n\To+\infty$ an $F$ is lower semi-continuous it is obvious that $\lim_{n\To+\infty} F(u_n)\ge F(\ol u).$ Hence,
$$\lim_{n\To+\infty}F(u_n)=F(u^*)\ge F(\ol u).$$

Assume now that (H4) also holds. Obviously in this case
$$u_{n_j}\To\ol u\mbox{ and }F(u_{n_j})\To F(\ol u),\,j\To+\infty.$$
Consequently, one has $F(\ol u)=F(u^*).$

From (H2) we have that there exists $W_{n_j}\in \p F(u_{n_j})$ such that
$$ \|W_{n_j}\|\le b(\|x_{{n_j}+1}-x_{{n_j}}\|+\|x_{n_j}-x_{{n_j}-1}\|),$$
consequently,
$$\lim_{j\To+\infty}\|W_{n_j}\|=0.$$

Now, one has
$$(u_{n_j},W_{n_j})\To(\ol u, 0)\mbox{ and }F(u_{n_j})\To F(\ol u),\,j\To+\infty$$
hence by  the closedness criterion of the graph of the limiting subdifferential we get
$$0\in\p F(\ol u),$$
which shows that $\ol u\in  \crit (F)$.
 \end{proof}

 Next we prove Corollary \ref{corabstrconv}.

 \begin{proof} (Proof of Corollary \ref{corabstrconv}) The claim that (H3) holds with $c_1=2+c$ and $c_2=c$ is an easy verification. We have to show that \eqref{e00} holds, that is, $u_n\in B(u^*,\rho)$ implies $u_{n+1}\in B(u^*,\sigma)$ for all $n\in\N.$

According to (H1), the assumption that $F(u_n)\ge F(u^*)$ for all $n\ge 1$ and  the hypotheses of Lemma \ref{abstrconv},  we have
$$\|x_n-x_{n-1}\|\le\sqrt{\frac{F(u_{n-1})-F(u_{n})}{a}}\le\sqrt{\frac{F(u_0)-F(u_{n})}{a}}\le\sqrt{\frac{F(u_0)-F(u^*)}{a}}<\sqrt{\frac{\eta}{a}}$$
and
$$\|x_{n+1}-x_{n}\|\le\sqrt{\frac{F(u_{n})-F(u_{n+1})}{a}}\le\sqrt{\frac{F(u_0)-F(u_{n+1})}{a}}\le\sqrt{\frac{F(u_0)-F(u^*)}{a}}<\sqrt{\frac{\eta}{a}}$$
for all $n\ge 1$.

Assume now that $n\ge 1$ and $u_n\in B(u^*,\rho).$
Then, by using the triangle inequality we get
$$\|u_{n+1}-u^*\|=\|(u_{n+1}-u_n)+(u_n-u^*)\|\le \|u_{n+1}-u_n\|+\|u_n-u^*\|\le\|u_{n+1}-u_n\|+\rho.$$

Further,
\begin{align}\nonumber\|u_{n+1}-u_n\|&=\|(v_{n+1}-v_n,w_{n+1}-w_n)\|\\
\nonumber&\le\|x_{n+1}+\a_{n+1}(x_{n+1}-x_{n})-x_n-\a_n(x_n-x_{n-1})\|\\
\nonumber&+\|x_{n+1}+\b_{n+1}(x_{n+1}-x_{n})-x_n-\b_n(x_n-x_{n-1})\|\\
\nonumber&\le(2+|\a_{n+1}|+|\b_{n+1}|)\|x_{n+1}-x_n\|+(|\a_n|+|\b_n|)\|x_n-x_{n-1}\|\\
\nonumber&\le(2+c)\|x_{n+1}-x_n\|+c\|x_{n}-x_{n-1}\|,
\end{align}
where $c=\sup_{n\in\N}(|\a_{n}|+|\b_{n}|).$

Consequently, we have
$$\|u_{n+1}-u^*\|\le (2+c)\|x_{n+1}-x_n\|+c\|x_{n}-x_{n-1}\|+\rho<(2+2c)\sqrt{\frac{\eta}{a}}+\rho\le\sigma,$$ which
is exactly $u_{n+1}\in B(u^*,\sigma).$ Further, arguing analogously as at Step I. in the proof of Lemma \ref{abstrconv}, we obtain that $u_{1}\in  B(u^*,\rho)\subseteq B(u^*,\sigma)$ and this concludes the proof.
\end{proof}

Now we are ready to prove Theorem \ref{thabstrconv}.

\begin{proof}(Theorem \ref{thabstrconv})
We will apply Corollary \ref{corabstrconv}. Since $u^*=(x^*,x^*)\in \omega((u_n)_{n\in\N})$ there exists a subsequence
$(u_{n_k})_{k\in\N}$ such that
$$u_{n_k}\To u^*,\, k\To+\infty.$$

From (H1) we get that the sequence $(F(u_n))_{n\in\N}$ is decreasing and  from (H4), which according to the hypotheses holds for $u^*$, one has $F(u_{n_k})\To F(u^*),\, k\To+\infty,$ that implies
\begin{equation}\label{tce1}
F(u_n)\To F(u^*),\,n\To+\infty\mbox{ and }F(u_n)\ge F(u^*),\,\mbox{for all }n\in\N.
\end{equation}

We show next that $x_{n_k}\To x^*,\, k\To+\infty.$ Indeed, from (H1) one has
$$a\|x_{n_k}-x_{{n_k}-1}\|^2\le F(u_{n_k-1})-F(u_{{n_k}})$$
and obviously the right side of the above inequality goes to $0$ as $k\To+\infty.$
Hence,
$$\lim_{k\To+\infty}(x_{n_k}-x_{n_k-1})=0.$$
Further, since the sequences $(\a_n)_{n\in\N},(\b_n)_{n\in\N}$ are bounded we get
$$\lim_{k\To+\infty}\a_{n_k}(x_{n_k}-x_{n_k-1})=0$$
and
$$\lim_{k\To+\infty}\b_{n_k}(x_{n_k}-x_{n_k-1})=0.$$
Finally, $u_{n_k}\To u^*,\, k\To+\infty$ is equivalent to
$$x_{n_k}-x^*+\a_{n_k}(x_{n_k}-x_{{n_k}-1})\To 0,\, k\To+\infty$$
and
$$x_{n_k}-x^*+\b_{n_k}(x_{n_k}-x_{{n_k}-1})\To 0,\, k\To+\infty,$$
which lead to the desired conclusion, that is
\begin{equation}\label{tce2}
x_{n_k}\To x^*,\, k\To+\infty.
\end{equation}
The KL property around $u^*$ states the existence of quantities $\varphi$, $U$, and $\eta$ as in Definition \ref{KL-property}.
Let $\sigma > 0 $ be such that $ B(u^*, \sigma)\subseteq U$ and $\rho\in(0,\sigma).$  If necessary we shrink $\eta$ such that
$\eta < \frac{a(\sigma-\rho)^2}{4(1+c)^2},$ where $c=\sup_{n\in\N}(|\a_n|+|\b_n|).$

Now, since the function $\varphi$ is continuous and $(F(u_n))$ is nonincreasing, further $F(u_n)\To F(u^*),\,n\To+\infty$,  $\varphi(0)=0$ and $u_{n_k}\To u^*,\,x_{n_k}\To x^*,\, k\To+\infty$ we conclude that there exists
$n_0\in\N,\,n_0\ge 1$ such that $u_{n_0}\in B(u^*,\rho)$ and $F(u^*)\le F(u_{n_0})<F(u^*)+\eta,$
moreover
$$\|x^*-x_{n_0}\|+3\sqrt{\frac{F(u_{n_0})-F(u^*)}{a}}+\frac{9b}{4a}\varphi(F(u_{n_0})-F(u^*))<\frac{\rho}{c_1+c_2}.$$

Hence, Corollary \ref{corabstrconv} and consequently Lemma \ref{abstrconv} can be applied to the sequence $(\mathcal{U}_n)_{n\in\N},\, \mathcal{U}_n=u_{n_0+n}.$

Thus, according to Lemma \ref{abstrconv}, $(\mathcal{U}_n)_{n\in\N}$ converges to a point $(\ol x,\ol x)\in\crit(F),$ consequently  $(u_n)_{n\in\N}$ converges to $(\ol x,\ol x).$ But then, since $\omega((u_n)_{n\in\N})=\{(\ol x,\ol x)\}$ one has $x^*=\ol x.$  Hence,
$(x_n)_{n\in\N}$ converges to $x^*$, $(u_n)_{n\in\N}$ converges to $u^*$ and $u^*\in\crit(F).$
\end{proof}

\section{Abstract convergence rates in terms of the KL exponent}

The following lemma was established in \cite{BN} and will be crucial in obtaining our convergence rates, (see also  \cite{attouch-bolte2009} for different techniques).

\begin{lemma}[\cite{BN} Lemma 15]\label{ratel} Let $(e_n)_{n\in\N}$ be  a  monotonically  decreasing  positive sequence  converging to $0.$  Assume  further that there exist the natural numbers $l_0\ge 1$ and $n_0\ge l_0$  such that for every $n\ge n_0$ one has
\begin{equation}\label{eseq}
e_{n-l_0}-e_n\ge C_0 e_n^{2\t}
\end{equation}
where $C_0>0$ is some constant and $\t\in[0,1).$  Then following  statements are true:
\begin{itemize}
\item[(i)]  if $\t=0,$  then $(e_n)_{n\ge n_0}$ converges in finite time;
\item[(ii)]  if $\t\in\left(0,\frac12\right]$, then there exists $C_1>0$ and $Q\in[0,1)$, such that for every $n\ge n_0$
$$e_n\le C_1 Q^n;$$
\item[(iii)]  if $\t\in\left[\frac12,1\right)$, then there exists $C_2>0$, such that for every $n\ge n_0+l_0$
$$e_n\le C_2(n-l_0+1)^{-\frac{1}{2\t-1}}.$$
\end{itemize}
\end{lemma}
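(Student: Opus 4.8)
The plan is to exploit the single recursion $e_{n-l_0}-e_n\ge C_0 e_n^{2\t}$ together with the monotonicity of $(e_n)_{n\in\N}$ and the fact that $e_n\To 0$, treating the three ranges of $\t$ separately. In each case I would argue along each of the $l_0$ residue classes modulo $l_0$, telescoping the recursion in steps of $l_0$, and then glue the resulting estimates back together using monotonicity; this block structure is exactly what produces the index shift $n-l_0+1$ appearing in (iii).

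For (i), when $\t=0$ the recursion forces $e_{n-l_0}-e_n\ge C_0>0$ on every block of length $l_0$ as long as $e_n>0$. If $(e_n)$ never reached $0$, telescoping along a fixed residue class would give $e_{n_0}\ge kC_0\To+\infty$, contradicting $e_n\ge 0$; hence $e_n$ attains its limit $0$ after finitely many indices, which is convergence in finite time. For (ii), since $2\t\le 1$ and $e_n\To 0$ there is an index past which $e_n\le 1$, so $e_n^{2\t}\ge e_n$, and the recursion upgrades to $e_{n-l_0}\ge(1+C_0)e_n$, a geometric contraction by $\frac{1}{1+C_0}$ across each block. Iterating along each residue class gives $e_{n_0+kl_0}\le(1+C_0)^{-k}e_{n_0}$, and filling in the intermediate indices by monotonicity produces a bound $e_n\le C_1 Q^n$ with $Q=(1+C_0)^{-1/l_0}\in[0,1)$.

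The delicate case is (iii), with $2\t>1$. Here I would introduce the decreasing auxiliary function $\psi(s)=s^{1-2\t}$ (positive since $1-2\t<0$, and $\psi(e_{n-l_0})\To+\infty$ because $e_{n-l_0}\To 0$) and show that its increments over a block are uniformly bounded below, splitting each step into two regimes. If $e_{n-l_0}\le 2e_n$, then since $t\mapsto(2\t-1)t^{-2\t}$ is decreasing and $t\le e_{n-l_0}\le 2e_n$ on the interval of integration,
$$\psi(e_n)-\psi(e_{n-l_0})=\int_{e_n}^{e_{n-l_0}}(2\t-1)t^{-2\t}\,dt\ge (2\t-1)(2e_n)^{-2\t}(e_{n-l_0}-e_n)\ge C_0(2\t-1)2^{-2\t},$$
where the last step uses the recursion. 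If instead $e_{n-l_0}> 2e_n$, then $e_n<\frac12 e_{n-l_0}$ and raising to the negative power $1-2\t$ reverses the inequality, giving $\psi(e_n)>2^{2\t-1}\psi(e_{n-l_0})$, so $\psi(e_n)-\psi(e_{n-l_0})\ge(2^{2\t-1}-1)\psi(e_{n-l_0})$, which is eventually bounded below by a positive constant since $\psi(e_{n-l_0})\To+\infty$. Combining both regimes yields $\psi(e_n)-\psi(e_{n-l_0})\ge C_4>0$ for all large $n$, and telescoping along each residue class gives $\psi(e_n)\ge C_5(n-l_0+1)$, that is $e_n\le C_2(n-l_0+1)^{-\frac{1}{2\t-1}}$.

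The difficulties I anticipate are bookkeeping rather than conceptual: merging the two regimes in (iii) into one uniform positive lower bound on the increments of $\psi(e_n)$, and propagating the step-$l_0$ estimates to all indices via monotonicity, which is where the shift $n-l_0+1$ enters. I would also remark that the overlap at $\t=\frac12$ between (ii) and (iii) is harmless, since there $2\t=1$ and the geometric bound of (ii) applies.
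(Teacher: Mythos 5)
The paper contains no proof of this lemma for you to be compared against: it is imported verbatim from \cite{BN} (Lemma 15) and used as a black box, with \cite{attouch-bolte2009} mentioned only as a source of alternative techniques. Judged on its own merits, your argument is correct, and it is in fact the standard argument of this literature: telescoping the recursion in steps of $l_0$ for (i); upgrading $e_n^{2\t}\ge e_n$ (valid once $e_n\le 1$) to the geometric contraction $e_{n-l_0}\ge(1+C_0)e_n$ for (ii); and, for (iii), bounding from below the increments of $\psi(e_n)=e_n^{1-2\t}$ via the dichotomy $e_{n-l_0}\le 2e_n$ (integral comparison combined with \eqref{eseq}) versus $e_{n-l_0}>2e_n$ (the multiplicative jump $\psi(e_n)>2^{2\t-1}\psi(e_{n-l_0})$), then telescoping. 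Only a few bookkeeping points deserve to be made explicit. In (ii), the contraction holds only from the first index $n_1$ with $e_{n_1}\le1$ onward, so the iteration must start at $\max(n_0,n_1)$ and the finitely many indices in $[n_0,n_1)$ are absorbed by enlarging $C_1$; as written, your display $e_{n_0+kl_0}\le(1+C_0)^{-k}e_{n_0}$ tacitly presumes $e_{n_0}\le1$. In (iii), Case 2 needs no ``eventually'': since $(e_n)$ is decreasing and $\psi$ is a decreasing function, $\psi(e_{n-l_0})\ge\psi(e_{n_0-l_0})>0$ for all $n\ge n_0$, so $C_4=\min\bigl\{C_0(2\t-1)2^{-2\t},\,(2^{2\t-1}-1)\psi(e_{n_0-l_0})\bigr\}$ works uniformly, which slightly cleans up the subsequent telescoping toward the bound $\psi(e_n)\ge C_5(n-l_0+1)$. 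Finally, your remark about the endpoint $\t=\tfrac12$ is genuinely needed (not just cosmetic), since there $\psi$ degenerates to a constant and the exponent $-\tfrac{1}{2\t-1}$ in (iii) is meaningless; the overlap is indeed resolved by (ii), exactly as you say, and the same convention ($e_n^0=1$ for $e_n>0$, with ``positive'' read as nonnegative) is what makes (i) non-vacuous. None of these points affects the validity of your proof.
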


Now we are ready  to prove Theorem \ref{thabstrrate}.

\begin{proof}(\rm Proof of Theorem \ref{thabstrrate}.)

The fact that the sequence $(x_n)_{n\in\N}$ converges to $x^*$, $(u_n)_{n\in\N}$ converges to $u^*$ and $u^*\in\crit(F)$ follows from Theorem \ref{thabstrconv}.
We divide the proof of the statements (a)-(c) into two cases.

{\bf Case I.} Assume that there exists $\ol n\in\N$ such that $F(u_{\ol n})=F(u^*).$

According to (H4) there exists $(u_{n_j})\subseteq (u_n)$ such that
$$u_{n_j}\To u^*,\,F(u_{n_j})\To F(u^*),\,j\To+\infty.$$
Now, $F(u_{n_j})=F(u^*)$ for all $n_j\ge \ol n$ since the sequence $(F(u_{n_j}))_{j\in\N}$ is decreasing, and hence
$$F(u^*)=F(u_{\ol n})\ge F(u_{n_j})\ge\lim_{j\To+\infty}F(u_{n_j})=F(u^*).$$
Further, for every $n\ge \ol n$ there exists $j_0\in\N$ such that $n\le n_{j_0}$, consequently
$$F(u^*)=F(u_{\ol n})\ge F(n)\ge F(u_{n_{j_0}})=F(u^*).$$
In other words, $F(u_n)=F(u^*)$ for all $n\ge \ol n.$ From (H1) we get that for all $n\ge \ol n$
$$\|x_{n+1}-x_n\|^2\le\frac{1}{a}(F(u_n)-F(u_{n+1})=F(u^*)-F(u^*)=0$$
hence, $x_{n+1}=x_n$ for all $n\ge \ol n.$ But $x_n\To x^*,\,n\To+\infty$, hence $x_n=x^*$ for all $n\ge \ol n$.

But then, $u_n=(x_n,x_n)=(x^*,x^*)$ for all $n\ge \ol n$.
Consequently, $(F(u_n))_{n\in\N},(x_n)_{n\in\N}$ and $(u_n)_{n\in\N}$ converge in a finite number of steps and this concludes $(a)-(c)$.

{\bf Case II.} We assume that $F(u_n)>F(u^*)$ for all $n\in\N.$ Now, by using (H2) and (H1) we get
\begin{align}\label{foren}
\dist\nolimits^2(0,\p F(u_n))&\le b^2(\|x_n-x_{n-1}\|+\|x_{n-1}-x_{n-2}\|)^2\\
\nonumber&\le 2b^2(\|x_n-x_{n-1}\|^2+\|x_{n-1}-x_{n-2}\|^2)\\
\nonumber&\le \frac{2b^2}{a}((F(u_{n-1})-F(u_n))+(F(u_{n-2})-F(u_{n-1})))\\
\nonumber&=\frac{2b^2}{a}((F(u_{n-2})-F(u^*))-(F(u_n)-F(u^*))),
\end{align}
for all $n\ge 2.$

Now, according to (H4) there exists $(u_{n_j})\subseteq (u_n)$ such that
$$u_{n_j}\To u^*,\,F(u_{n_j})\To F(u^*),\,j\To+\infty.$$
Combining the above fact with the facts that $(F(u_n))$ is nonincreasing and $u_n\To u^*,\,n\To+\infty$ we conclude that there exists $\ol {n}\in\N,\,\ol n\ge 2$ such that $F(u^*)<F(u_n)<F(u^*)+\eta$ and $u_n\in B(u^*,\e)$ for all $n\ge \ol n.$ So,  since the function $F$ has the Kurdyka-{\L}ojasiewicz property with an exponent $\t\in[0,1)$ at $u^*$ we can apply the KL-inequality and we get
\begin{equation}\label{KLforF}
\dist\nolimits^2(0, \p F(u_n)) \ge \frac{1}{K^2}(F(u_n)-F(u^*))^{2\t},\mbox{ for all }n\ge\ol n.
\end{equation}

Hence, \eqref{foren} and \eqref{KLforF} yields
\begin{equation}\label{foren1}
\frac{a}{2b^2K^2}(F(u_n)-F(u^*))^{2\t}\le (F(u_{n-2})-F(u^*))-(F(u_n)-F(u^*)),\mbox{ for all }n\ge\ol n.
\end{equation}

Further, using (H4) again, we have $F(u_{n_j})\To F(u^*),\,j\To+\infty$ and $(F(u_n))$ is nonincreasing  which leads to
$$\lim_{n\To+\infty}F(u_n)-F(u^*)=0.$$

Let us denote $e_n=F(u_n)-F(u^*).$ Then  $(e_n)_{n\in\N}$ is a monotonically  decreasing  positive sequence  converging to $0.$  Further from \eqref{foren1} we have  that there exist the natural numbers $l_0=2$ and $\ol n\ge l_0$  such that for every $n\ge \ol n$ one has
$$e_{n-l_0}-e_n\ge C_0 e_n^{2\t},$$
where $C_0=\frac{a}{2b^2K^2}>0.$
Consequently, Lemma \ref{ratel} can be applied.
\vskip0.3cm
Let $\t=0$. Then, the sequence $(F(u_n)-F(u^*))$ converges in a finite number of steps, that is $F(u_n)=F(u^*)$ after and index $n_1\in\N$. But then, according to Case I. $(x_n)$  and $(u_n)$ converges in a finite number of steps and this concludes (a).
\vskip0.3cm
Let $\t\in\left(0,\frac12\right].$ Then, there exists $C_1>0$ and $Q\in[0,1)$, such that for every $n\ge \ol n$
$$F(u_n)-F(u^*)\le C_1 Q^n.$$

According to \eqref{fontos} we have
$$3\|x_{k+1}-x_{k}\|\le\|x_{k}-x_{k-1}\|+\|x_{k-1}-x_{k-2}\|+\frac{9bK}{4a(1-\t)}(e_k^{1-\t}-e_{k+1}^{1-\t})$$
for all $k\ge \ol n.$ Summing up the latter relation from $k=n\ge\ol n$ to $k=P>n$ we get
$$\sum_{k=1}^P\|x_{k+1}-x_{k}\|\le2\|x_{n}-x_{n-1}\|+\|x_{n-1}-x_{n-2}\|-2\|x_{P+1}-x_{P}\|-\|x_{P}-x_{P-1}\|+
\frac{9bK}{4a(1-\t)}(e_n^{1-\t}-e_{P+1}^{1-\t}).$$
Now, from the triangle inequality we have
$$\|x_n-x_{P+1}\|\le \sum_{k=1}^P\|x_{k+1}-x_{k}\|,$$
hence,
$$\|x_n-x_{P+1}\|\le2\|x_{n}-x_{n-1}\|+\|x_{n-1}-x_{n-2}\|-2\|x_{P+1}-x_{P}\|-\|x_{P}-x_{P-1}\|+
\frac{9bK}{4a(1-\t)}(e_n^{1-\t}-e_{P+1}^{1-\t}).$$
By neglecting the nonpositive terms and letting $P\To+\infty$ we get
\begin{equation}\label{fort12}
\|x_n-x^*\|\le 2\|x_{n}-x_{n-1}\|+\|x_{n-1}-x_{n-2}\|+\frac{9bK}{4a(1-\t)}e_n^{1-\t}.
\end{equation}
Now, by using (H1) we get
\begin{equation}\label{fort120}
2\|x_{n}-x_{n-1}\|+\|x_{n-1}-x_{n-2}\|\le\sqrt{\frac{2(e_{n-1}-e_{n})}{a}}+\sqrt{\frac{e_{n-2}-e_{n-1}}{a}}.
\end{equation}
Consequently, for all $n\ge \ol n+2$ one has
\begin{equation}\label{fort121}
\|x_n-x^*\|\le \sqrt{\frac{2}{a}}\sqrt{C_1}Q^{\frac{n-1}{2}}+ \sqrt{\frac{1}{a}}\sqrt{C_1}Q^{\frac{n-2}{2}}+\frac{9bK}{4a(1-\t)}C_1^{1-\t}Q^{(1-\t)n}.
\end{equation}
Now, $\t\le\frac12$ and $Q\in[0,1)$ hence, $Q^{(1-\t)n}\le Q^{\frac{n}{2}},$ hence \eqref{fort121} yields
$$\|x_n-x^*\|\le \ol C Q^{\frac{n}{2}}$$
for some $\ol C$ and for all $n\ge \ol n+2.$

Further, according to (H3)
\begin{align}\label{foru}
\|u_n-u^*\|&\le(2+c)\|x_n-x^*\|+c\|x_{n-1}-x^*\|\\
\nonumber&\le (2+c)C Q^{\frac{n}{2}}+cC Q^{\frac{n-1}{2}}\\
\nonumber&= \ol C_1 Q^{\frac{n}{2}},
\end{align}
for all $n\ge \ol n+3$, where $c=\sup_{n\in\N}(|\a_n|+|\b_n|).$ Hence, (b) is complete if one takes $A_1=\max(C_1,\ol C,\ol C_1)$ and $\ol k=\ol n+3$.
\vskip0.3cm

Let $\t\in\left[\frac12,1\right)$. Then, there exists $C_2>0$, such that for every $n\ge \ol n+2$
$$F(u_n)-F(u^*)\le C_2(n-1)^{-\frac{1}{2\t-1}}.$$
But $$(n-1)^{-\frac{1}{2\t-1}}\le 2^{\frac{1}{2\t-1}}n^{-\frac{1}{2\t-1}},$$
hence
$$F(u_n)-F(u^*)\le C_2 2^{\frac{1}{2\t-1}}n^{-\frac{1}{2\t-1}}=\ol C_2 n^{-\frac{1}{2\t-1}},$$
for all $n\ge \ol n+2.$

Now, by using \eqref{fort12} and \eqref{fort120} we get
\begin{equation}\label{fort1}
\|x_n-x^*\|\le \sqrt{\frac{2(F(u_{n-1})-F(u^*))}{a}}+\sqrt{\frac{F(u_{n-2})-F(u^*)}{a}}+\frac{9bK}{4a(1-\t)}(F(u_{n-1})-F(u^*))^{1-\t}.
\end{equation}

Hence, there exists $\ol C_3,\ol C_4,\ol C_5>0$ such that
$$\|x_n-x^*\|\le \ol C_3 n^{-\frac{\frac12}{2\t-1}}+\ol C_4 n^{-\frac{\frac12}{2\t-1}}+\ol C_5 n^{-\frac{1-\t}{2\t-1}},$$
or all $n\ge \ol n+4.$
Now, since $\t>\frac12$ one has
$$n^{-\frac{1-\t}{2\t-1}}\ge n^{-\frac{\frac12}{2\t-1}}$$
hence there exists $\ol A_2>0$ such that
$$\|x_n-x^*\|\le \ol A_2 n^{-\frac{1-\t}{2\t-1}},$$
or all $n\ge \ol n+4.$

By using the form of $u_n$
$$\|u_n-u^*\|\le(2+c)\|x_n-x^*\|+c\|x_{n-1}-x^*\|$$
or all $n\in\N$, where $c=\sup_{n\in\N}(|\a_n|+|\b_n|),$ hence there exists $\ol A_3$ such that
$$\|u_n-u^*\|\le  \ol A_3 n^{-\frac{1-\t}{2\t-1}},$$
or all $n\ge \ol n+5.$

Consequently, (c) holds for $A_2=\max(\ol C_2,\ol A_2,\ol A_3)$ and $\ol k=\ol n+5.$
\end{proof}

\end{document}